\documentclass[a4paper]{article}
\usepackage[ansinew]{inputenc}
\usepackage[english]{babel}
\usepackage{amsbsy,amscd,amsfonts,amsmath,amsopn,amssymb,amstext,amsthm,amsxtra,array,color}
\usepackage{fancybox,float,graphicx,latexsym,subfigure,srcltx,times,tikz,url,amsthm}
\usepackage{enumerate}
\voffset 0cm \hoffset -1.5cm
\leftmargin 0.1cm \rightmargin 2cm
 \textwidth 15.8cm
 \textheight 23cm
 \hyphenation{}
 \listfiles

\newcommand{\diam}{\operatorname{diam}} 
\newcommand{\Or}{\operatorname{Or}} 

\newcommand{\ro}[2]{\row(#1, #2)}
\newcommand{\nur}[2]{\nu(\ro{#1}{#2})}



\newcommand{\norma}[1]{M_{#1}^{N}}
\newcommand{\snorma}[1]{M_{#1}^{SN}}

\newcommand{\mx}[1]{W_n}


\newcommand{\sucen}[2]{{#1_{1},\hdots,#1_{#2}}}






\newcommand{\N}{\mathbb{N}}

\newcommand{\R}{\mathbb{R}}

\newcommand{\MM}{\mathfrak{M}}

\newcommand{\ORTHO}{\mathcal{ORTHO}}
\newcommand{\VNL}{\mathcal{VNL}}
\newcommand{\WNL}{\mathcal{WNL}}

\newcommand{\gift}{\operatorname{gift}}
\newcommand{\propa}{\operatorname{prop}}

\renewcommand{\int}{\operatorname{int}}

\newcommand{\row}{\operatorname{row}}

\newcommand{\dist}{\operatorname{dist}}

\newcommand{\girth}{\operatorname{girth}}

\newtheorem{thm}{Theorem}[section]
\newtheorem{lem}[thm]{Lemma}
\newtheorem{cor}[thm]{Corollary}
\newtheorem{prop}[thm]{Proposition}
\newtheorem{ex}[thm]{Example}

\newtheorem{rem}[thm]{Remark}

\theoremstyle{definition}
\newtheorem{dfn}[thm]{Definition}
\newtheorem{nota}[thm]{Notation}

\def\np{\star}

\title{Orthogonality for $(0,-1)$ tropical normal matrices}
\author{Bakhad Bakhadly$^{1}$, Alexander Guterman$^{1,2}$, Mar\'{i}a Jes\'us de la Puente$^{3}$}
\date{\small $^{1}$Department of Mathematics and Mechanics,
	Lomonosov Moscow State University, Moscow, 119991, Russia \\$^{2}$  Moscow Institute of Physics and Technology, Dolgoprudny, 141701, Russia \\ $^{3}$ Departamento de \'{A}lgebra, Geometr\'{\i}a y Topolog\'{\i}a,
	Facultad de Matem\'{a}ticas,
	Plaza de Ciencias, 3,
	Universidad Complutense UCM,
	28040--Madrid, Spain}

\begin{document}

\maketitle

\begin{abstract}
We study  pairs of mutually orthogonal  normal matrices with   respect to  tropical multiplication.  Minimal orthogonal pairs
are characterized.  The diameter and girth of three graphs  arising from  the  orthogonality equivalence relation
are computed.

{\bf Keywords}: semirings, normal matrices,
orthogonality relation,  graphs, tropical algebra
\end{abstract}

\section{ Introduction}
By tropical linear algebra we mean linear algebra done with the tropical operations $a\oplus b:= \max\{a,b\}$ and $a\odot b:= a+b$.
The operations $\oplus$, $\odot$
are called \emph{tropical sum},  \emph{tropical multiplication}, respectively. These tropical operations extend,  in a natural way,
to  matrices of any order.

We work over $R=\{0,-1\}$, where we define   $(-1)\odot a= -1=a\odot (-1)$ and $0\odot a=0+a=a=a+0=a\odot 0$, for  $a\in R$  (so, zero is the neutral element with respect to $\odot$).
In particular, $(-1)\odot(-1)=(-1)+(-1)=-1$.
For tropical addition, the neutral element is  $-1$ and no opposite elements exist. To compensate this lack, tropical addition
is  idempotent:  we have  $a\oplus a=a$, for  $a\in R$. Further, we have an  order relation $-1 < 0$ compatible with the operations.
Summing up,  $(R, \oplus, \odot)$  is an \emph{ordered semiring, additively idempotent.} An additively idempotent semiring is called a dioid in \cite{Gondran}.

Note that in the definition of semiring some authors impose the condition that the neutral elements for addition and multiplication
be mutually different, but we do not. Some other authors impose that the neutral element  for addition $e$ is absorbing,
i.e., multiplication by this element is trivial $ea=e=ae, \forall a$, but we do not. Why?
Citing Pouly, \emph{ordered idempotent semirings are essentially different from fields} and
\emph{this is one reason why mathematicians are interested in semirings}; see \cite{Pouly}.  Another reason is that we want  to produce
new semirings from a given semiring, such as  the semiring of square matrices and the semiring of polynomials over the initial semiring.

We refer to  $(R, \oplus, \odot)$ as the  {\em tropical semiring} or  {\em max--plus semiring}; see \cite{Akian_HB} for a summary on max--plus properties.
The so called \emph{normal matrices}, i.e.,  matrices $[a_{ij}]$
satisfying $a_{ij}\le0$ and $a_{ii}=0$ over the tropical semiring $R$ are the protagonists of this paper.
For any $n\in \N$, the set of such square matrices over $R$ is denoted by $\norma n$ and $(M_n^N,\oplus,\odot)$ happens to be a semiring.
There exist two distinguished matrices: the all zero matrix $Z_n$ and the identity matrix $I_n=(b_{ij})$, with $b_{ii}=0$, $b_{ij}=-1$, if
$i\neq j$.

Assuming $n\ge 2$, the bizarre property of $M_n^N$ is that the same element, $I_n$, is neutral for both tropical operations
$\oplus$ and $\odot$.
Here $Z_n$ is not the neutral element for tropical addition, but it keeps the  absorbing property $AZ_n=Z_n=Z_nA$, for all $A\in\norma n$.

Every normal matrix $A$ satisfies the inequalities $I_n\le A\le Z_n$ trivially, whence
 $Z_n$ is the top element and  $I_n$ is the bottom element  in $\norma n$. Further, normal matrices satisfy
\begin{equation}
A\oplus I_n=A=I_n\oplus A, \quad AI_n=A=I_nA.
\end{equation}

Orthogonality is a fundamental notion in mathematics. The purpose of this paper is to investigate pairs of
\emph{mutually orthogonal  tropical matrices}, i.e.,
to find necessary and sufficient conditions for  square  matrices  $A,B$ to satisfy
\begin{equation}\label{eqn:ortho_matrices}
A B=Z_n=B  A,
\end{equation}
where  $Z_n$ is the  matrix, whose all elements are 0.\footnote{In the case addressed  in this paper, of normal matrices,
since the neutral element for addition and the absorbing property are not  attributes of a single element,
 to call orthogonality
to the relation $AB=Z_n=BA$ is slightly questionable (but not atrocious). Another possible definition is: given matrices
$A,B\in M_n$, say that they are \emph{mutually perpendicular}
if, for every row $r$ in $A$  and every column $c$ in $B$, the maximum $\max_{i\in[n]} r_i+c_i$ is attained at least twice, and
symmetrically, for every row $r$ in $B$  and every column $c$ in $A$, the maximum $\max_{i\in[n]} r_i+c_i$ is attained at least twice.
We do not explore this definition in the present paper.} If $A^2=Z_n$, then we say that $A$ is \emph{self--orthogonal}. To
simplify, we write $AB$  for $A\odot B$,  since there is no non--tropical multiplication of matrices in this paper.

\bigskip
Mutual orthogonality of a pair $A,B$ arises   in classical
algebra (e.g.  idempotents and  projectors, where the equivalence $A=A^2 \Leftrightarrow AB=0=BA$ holds, with $B=1-A$), functional analysis (e.g., families of orthogonal polynomials and orthogonal functions) and signal theory. In neighboring disciplines, such as  statistics, economics, computer science and
physics,  orthogonal states are considered.

Combinatorial matrix theory is the  investigation of matrices using combinatorial tools (see \cite{Brualdi_Ryser,Brualdi,Haemers,Hall_Li}).
Binary relations on associative rings and semirings, and, in particular, on the  algebra  of matrices can be understood with the help of graph theory.
Indeed,  one studies the so--called {\it relation graph}, whose vertices are matrices in some set,  and  edges
show corresponding elements
under this relation. Commuting graphs and zero--divisor graphs are examples of relation graphs; they have become  classical concepts
in algebra and combinatorics.

Orthogonality appears
in  combinatorial matrix theory  and graph theory;
see~\cite{a31, Ovchin, Semrl} and references therein.
In the paper~\cite{BGM} the notion of the graph generated by the mutual orthogonality relation for
elements of an associative ring was introduced.
In the paper \cite{Linde_Puente},  the structure of the
centralizer  $\{B:AB=BA\}$ (with tropical multiplication) of a  given normal matrix $A$ was studied.
In fact, mutual orthogonality of a pair $A,B$ is a very special case of commutativity $AB=BA$. Observe that different properties of commutativity relation of matrices over semirings were intensively studied, see \cite{BeaS03} and references therein.

Semirings are widely used in discrete event systems, dynamic programming and linguistics \cite{Dolan,Golan,Pouly}. Recent attention has been paid to the   semiring  of normal matrices over $\R\cup\{-\infty\}$  with tropical  operations, in~\cite{Yu_Zhao_Zeng}.

\medskip

This paper introduces mutual  orthogonality in the semiring of square normal matrices over $(R,\oplus,\odot)$.\footnote{
We might have decided to work over the ordered idempotent semiring
of extended non--positive real numbers  $\R_{\le0}\cup\{-\infty\}$. Our choice of simpler semiring $\{0,-1\}$ is due to the fact
that we only mind whether elements vanish or not.} Our goal is
to find necessary
and sufficient conditions on $(A,B)$ for
orthogonality.
The main results are gathered in sections \ref{sec:minimal} and \ref{sec:graphs}: these are Theorems~\ref{thm:Theta} and \ref{thm:self_orthogonal_minimal}
and Corollaries~\ref{cor:suff_conditions} and ~\ref{cor:Theta} concerning minimality, as well as
Propositions~\ref{prop:girth} to \ref{prop:diam_wnl}, concerning graphs.
We depart from an easy--to--check sufficient condition for mutual orthogonality, namely, the existence of $p,q\in[n]$,
such that the $p$--th row  and the
$q$--th column of $A$ and the $p$--th column and the $q$--th row of $B$ are zero (Lemma \ref{lem:starting_point}).
Then Theorem~\ref{thm:Theta}  characterizes minimal pairs $(A,B)$ as the members of the set $\MM_{km}$ (Notation \ref{nota:MM_km}), for some $k,m\in[n]$
with $k\neq m$.
Corollary \ref{cor:Theta}  shows that the minimal number of off--diagonal zeros in  mutually
orthogonal pairs $(A,B)$ is $\Theta_n=4n-6$, for different matrices $A, B$ of  size  $n\ge 2$, $n \neq 4$.
The key concepts are  the \emph{indicator matrix $C$ of a pair $(A,B)$} as well as  three kinds of off--diagonal zeros in $C$: \emph{propagation, cost} and \emph{gift zeros}, introduced
in Definitions \ref{dfn:indicator} and \ref{dfn:gift}. It is quite obvious that zeros propagate from $A$ and $B$ to
the products $AB$ and
$BA$ and thus, to the indicator matrix $C$. However, other zeros (called cost zeros and gift zeros) pop up in $C$. It happens
that  carefully placed
zeros in $A$ and $B$ produce gift zeros in $C$,  while not so carefully placed
zeros  in $A$ and $B$ produce cost zeros in $C$. Further,  gift zeros in $C$ are the ones to maximize in number, for minimality. This is
the pivotal idea in the paper. In the special case $A=B$ when no gift zeros exist, minimality is attained by
maximizing the number of cost zeros.
In section \ref{sec:graphs}, we study a natural graph, denoted by $\ORTHO$, arising from the orthogonality relation between
normal matrices, as well as  two subgraphs. In Propositions \ref{prop:girth} to \ref{prop:diam_wnl} we find their diameters and girths.

\bigskip
 The paper is organized as follows. In Section \ref{sec:normal}, normal matrices are defined. In Section \ref{sec:properties}, general properties of mutually orthogonal tropical normal matrix
pairs are collected. In Section  \ref{sec:minimal} we compute $\Theta_n$, the minimal number of off--diagonal zeros  in  mutually
orthogonal pairs, as well as $\Theta_n^\Delta$, the the minimal number of off--diagonal zeros  in self--orthogonal matrices. Section \ref{sec:bordering} is
devoted to the construction of orthogonal pairs of big size from smaller ones, by means of   bordered matrices.
In section \ref{sec:graphs} we compute the girth and the diameter of  three graphs
related to orthogonal  pairs.   One  graph, denoted $\ORTHO$, studies the relation $AB=Z=BA$. Another graph, denoted $\VNL$,
studies  the $(p,q)$--sufficient condition stated in the paragraph above.  The third graph, denoted $\WNL$,
studies  three other sufficient conditions for orthogonality. Altogether, these are the four sufficient conditions found in Corollary \ref{cor:suff_conditions}.

\section{Normal matrices}\label{sec:normal}
Normal matrices (and a slightly weaker notion called \emph{definite matrices}) over different sets of numbers have been studied for
more than fifty years,
under different
names, beginning with Yoeli in \cite{Yoeli}.
The notion appears in connection with tropical algebra and  geometry \cite{Butkovic_M,Butkovic_Libro,Butkovic_Sch_Serg,Litvinov_ed_2,Puente_kleene,Sergeev,Sergeev_Sch_But,Yu_Zhao_Zeng}.
In  computer science they have been called DBM ({\it difference
bound matrices}). Introduced by Bellman in the 50's, DBM matrices  are widely used in software modeling \cite{Bellman,Dima,Katoen,Mine}.

Over the semiring $(R,\oplus,\odot)$ with $R=\R_{\le0}\cup\{-\infty\}$, normal matrices
have a  direct geometric interpretation  in terms of
complexes of alcoved  convex sets in $\R^n$, see \cite{Puente_kleene}.  Then mutual orthogonality
reflects how two such complexes annihilate each other. However, to explain this is beyond the scope of this paper.

\begin{dfn}[Normal, strictly normal and abnormal] \label{dfn:normal}
A square matrix $A=[a_{ij}]$
is  \emph{normal} if $a_{ij}\le0$ and all its diagonal entries equal~0. The set  of  order $n$ normal matrices is denoted by
$\norma n$. A normal matrix $A=[a_{ij}]$ is \emph{strictly normal} if  $a_{ij}<0$ for all
$i\neq j$. The set  of  order $n$ strictly normal matrices is denoted by $M_n^{SN}$. A matrix is \emph{abnormal} if it is not normal.
\end{dfn}

Clearly, $M_n^{N}$ and $M_n^{SN}$ are closed under $\oplus$ and $\odot$, and $I_n$ is the identity element for both
operations.\footnote{$(M_n^N,\oplus)$ is a semilattice (with associative, commutative and idempotent properties).} We use classical addition and substraction  of matrices, occasionally.

\begin{nota}[Elementary matrices]\label{nota:elementary} In the set $M_n$,
\begin{enumerate}
\item let $E_{ij}$ denote the matrix with the element $-1 $ in the $(i,j)$ position, and $0$ elsewhere,      \label{mat:Zij}
\item let $U_{ij}$ denote  the  matrix with  0 in the $(i,j)$ position, all diagonal entries equal to 0,  and $-1 $ elsewhere.\label{mat:Uij}
 \item  let    $U_n$ denote  the  matrix where every entry is equal to $-1 $. We write $U$ if $n$ is understood.
\item  let $Z_n$ be the all zero matrix, and $I_n$ be the identity matrix, with zeros on the diagonal, and $-1 $ elsewhere. We write $Z$ and $I$ if $n$ is understood.
\end{enumerate}
\end{nota}

\begin{rem}\label{rem:absorbing}
 Although  $Z_n$ is not neutral for $\oplus$, the zero matrix $Z_n$ is   an     absorbing element   in $\norma n$, i.e.,
\begin{equation}\label{eqn:absorbing}
AZ_n=Z_n=Z_nA, \quad A\in M_n^N.
\end{equation}
\end{rem}
\begin{rem}
 The equality \eqref{eqn:absorbing} does not hold without normality. For example, if   $A=-E_{12}$, then $AZ_n=-(E_{11}+\cdots+E_{1n})$ and $Z_nA=-(E_{12}+\cdots+E_{n2})$  (classical addition and substraction here).
\end{rem}

\section{Pairs of mutually orthogonal matrices} \label{sec:properties}
In this section, we assume $A,B\in \norma n$.  
 Recall that the definition of mutual orthogonality is
given by the expression  (\ref{eqn:ortho_matrices}).
Our goal is
to find necessary
and sufficient conditions on $(A,B)$ for  
orthogonality.

\medskip

\begin{lem} [Orthogonality for $n=1,\: 2$]\label{lem:n_2} Let $A, B\in \norma n$.
Then\footnote{Compare tropical vs. classical linear algebra: in the classical setting, it is easy to prove that for all  $n\in\N$ and $A,B\in M_n$, the equality $AB=A+B$ implies $AB=BA$.}
\begin{enumerate}
	\item If $n=1$ then $A$ and $B$ are orthogonal if and only if $A=B=0$.\label{item:n=1}
\item If $n=2$ then
\begin{enumerate}
\item $AB=A\oplus B=BA$, \label{item:one}
\item     $A=A^2$,  i.e.,   every   normal matrix of size 2 is multiplicatively idempotent,

\item    $A$ and $B$ are orthogonal if and only if $A\oplus B=Z_2$.\label{item:three}
\label{item:two}
\end{enumerate}
\end{enumerate}
\end{lem}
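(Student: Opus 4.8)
The plan is to dispose of $n=1$ by inspection and then reduce all three claims for $n=2$ to a single entrywise computation of $AB$. For part \ref{item:n=1}, I would observe that a normal matrix of size $1$ has its unique entry on the diagonal, hence equal to $0$; thus $\norma{1}=\{Z_1\}$ and necessarily $A=B=Z_1=[0]$. Then $AB=[0\odot 0]=[0]=Z_1$, so the pair is automatically orthogonal. Both assertions ``$A,B$ orthogonal'' and ``$A=B=0$'' hold unconditionally, so the stated equivalence is vacuously true.

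For the $n=2$ case I would prove \ref{item:one} first and deduce the remaining two items from it. Write the matrices with their off-diagonal entries as unknowns,
\[
A = \begin{bmatrix} 0 & a_{12} \\ a_{21} & 0 \end{bmatrix}, \qquad B = \begin{bmatrix} 0 & b_{12} \\ b_{21} & 0 \end{bmatrix},
\]
with all entries in $\{0,-1\}$. Since $\norma{2}$ is closed under $\odot$, the product $AB$ is again normal, so its diagonal is identically $0$. For an off-diagonal entry, say position $(1,2)$, I compute $(AB)_{12}=(A_{11}\odot B_{12})\oplus(A_{12}\odot B_{22})$; because $A_{11}=B_{22}=0$ are multiplicative identities this collapses to $B_{12}\oplus A_{12}=\max\{a_{12},b_{12}\}$, which is exactly $(A\oplus B)_{12}$. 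The same holds at position $(2,1)$, giving $AB=A\oplus B$. Exchanging the roles of $A$ and $B$ and using commutativity of $\oplus$ yields $BA=B\oplus A=A\oplus B$, which proves \ref{item:one}.

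With \ref{item:one} established, the idempotency claim is immediate: setting $B=A$ gives $A^2=A\oplus A=A$ by idempotency of tropical addition. Part \ref{item:three} is equally immediate, since by \ref{item:one} the orthogonality condition \eqref{eqn:ortho_matrices}, namely $AB=Z_2=BA$, is equivalent to the single equation $A\oplus B=Z_2$; conversely $A\oplus B=Z_2$ forces both $AB=Z_2$ and $BA=Z_2$.

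I do not anticipate a real obstacle, as everything reduces to the identity $AB=A\oplus B$. The only point deserving care is explaining why this identity is special to $n\le 2$: for $n=2$ there is no intermediate index, so the tropical sum defining an off-diagonal entry of $AB$ has only the two boundary terms $A_{ii}\odot B_{ij}$ and $A_{ij}\odot B_{jj}$, each of which reduces to an entry of $A$ or of $B$ because the relevant diagonal factor equals $0$. For $n\ge 3$ additional cross terms $A_{ik}\odot B_{kj}$ with $k\neq i,j$ appear, the equality $AB=A\oplus B$ generally fails, and this is precisely the phenomenon the later sections must analyze.
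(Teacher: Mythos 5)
Your proof is correct and follows essentially the same route as the paper's: a direct inspection for $n=1$ and the entrywise computation establishing $AB=A\oplus B=BA$, from which items (b) and (c) follow immediately (the paper merely calls this computation ``straightforward'' where you write it out). The only cosmetic difference is at $n=1$, where you note $\norma{1}=\{Z_1\}$ makes the equivalence vacuous while the paper checks $A\odot B=A+B=0\iff A=B=0$ directly; both are fine.
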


\begin{proof}
For Item \ref{item:n=1} we have  $Z_1=0\in R$. By the  rule of multiplication,  orthogonality   $AB=0=BA$ is equivalent to $ A+B=0=B+A$, which holds if and only if $A=0=B$, for $A,B\in R$.

Item \ref{item:one}  is a  straightforward computation.
The remaining two items follow directly from Item \ref{item:one}.

\end{proof}

Neither of the former statements holds true for abnormal matrices.
\begin{ex}
\begin{enumerate} \item $1\odot (-1) = 0 =(-1) \odot 1$, i.e., $1$ and $-1  $ are orthogonal.
\item \begin{enumerate} \item \label{ex3.2a} Let $A=\begin{bmatrix}
	0 & 1\\
	0 & 0
	\end{bmatrix}$. Then $\begin{bmatrix}
0 & 1\\
0 & 0
\end{bmatrix} \begin{bmatrix}
0 & 1\\
0 & 0
\end{bmatrix} = \begin{bmatrix}
1 & 1\\
0 & 1
\end{bmatrix} \neq \begin{bmatrix}
0 & 1\\
0 & 0
\end{bmatrix} = \begin{bmatrix}
0 & 1\\
0 & 0
\end{bmatrix} \oplus \begin{bmatrix}
0 & 1\\
0 & 0
\end{bmatrix}$   and  $A^2\ne A$ in this case.
\item  $\begin{bmatrix}
-1 & 0\\
0 & 0
\end{bmatrix} \oplus \begin{bmatrix}
0 & 0\\
0 & -1
\end{bmatrix} = \begin{bmatrix}
0 & 0\\
0 & 0
\end{bmatrix} \neq \begin{bmatrix}
0 & -1\\
0 & 0
\end{bmatrix} = \begin{bmatrix}
-1 & 0\\
0 & 0
\end{bmatrix} \begin{bmatrix}
0 & 0\\
0 & -1
\end{bmatrix}.$
\end{enumerate}
\end{enumerate}
\end{ex}

\emph{In the rest of the paper, all matrices are assumed to be normal.}

\begin{dfn}[Indicator matrix]\label{dfn:indicator}
For matrices $A,B\in \norma n$, define product matrices  $L := AB=[l_{ij}]$ and $R := BA=[r_{ij}] \in M_{n}^N$.
The matrix $C:=[c_{ij}]\in \norma n$ given by  $c_{ij}=\begin{cases}
0, \text{\ if\ } l_{ij}=r_{ij}=0,\\
-1, \text{\   otherwise}\\
\end{cases}$  is called   the
{\em indicator matrix\/} of the pair $(A, B)$.
\end{dfn}
Obviously, the matrices $A,B$ are mutually orthogonal if and only if the indicator matrix $C$ is zero.

\medskip
The next Lemma shows how easily zeros are propagated from $A$ or $B$ to $C$, by tropical multiplication.
\begin{lem}[Propagation of zeros]\label{lem:propagation}
Let   $A, B\in \norma n$ and $p,  q\in [n]$. If $a_{pq}=0$, then
$l_{pq}=r_{pq}=c_{pq}=0$.
\end{lem}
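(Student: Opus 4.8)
The plan is to unwind the definition of the tropical matrix product and, in each of the two relevant entries, exhibit a single summand that already attains the value $0$, while observing that no summand can exceed $0$. This reduces everything to a one-line max--plus estimate.

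First I would write out
\[
l_{pq} = (AB)_{pq} = \bigoplus_{k\in[n]} a_{pk}\odot b_{kq} = \max_{k\in[n]} \left(a_{pk}\odot b_{kq}\right).
\]
For the lower bound I single out the index $k=q$: since $B$ is normal we have $b_{qq}=0$, and by hypothesis $a_{pq}=0$, so the summand $a_{pq}\odot b_{qq} = 0\odot 0 = 0$ occurs in the maximum, whence $l_{pq}\ge 0$. For the upper bound, every entry of $A$ and $B$ lies in $\{0,-1\}$ and is therefore $\le 0$, so each product $a_{pk}\odot b_{kq}$ is $\le 0$, forcing $l_{pq}\le 0$. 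Combining the two gives $l_{pq}=0$. The argument for $R=BA$ is symmetric: I would expand $r_{pq}=\max_{k\in[n]}\left(b_{pk}\odot a_{kq}\right)$ and this time single out $k=p$, using $b_{pp}=0$ (normality of $B$) together with $a_{pq}=0$ to obtain the summand $b_{pp}\odot a_{pq}=0$; the same $\le 0$ bound on all summands then yields $r_{pq}=0$. Finally, since $l_{pq}=r_{pq}=0$, Definition \ref{dfn:indicator} gives $c_{pq}=0$ directly.

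There is no genuine obstacle in this statement; the only point requiring a little care is the bookkeeping of which summation index to exploit, namely $k=q$ for $AB$ and $k=p$ for $BA$, so that in each case the prescribed diagonal zero of $B$ is paired with the hypothesized zero $a_{pq}$. I would also remark that the truncated convention $(-1)\odot(-1)=-1$ over $\{0,-1\}$ plays no role here: the estimate uses only that $0$ is the multiplicative identity and that all entries are nonpositive, so the usual max--plus reasoning applies verbatim.
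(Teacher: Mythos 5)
Your proof is correct and follows essentially the same route as the paper's: expand the tropical product, pick out the summand indexed by $q$ (resp.\ $p$) to pair the hypothesized zero $a_{pq}$ with a diagonal zero of $B$, and bound everything above by $0$ via normality. The only cosmetic difference is that the paper splits off the case $p=q$ separately, whereas your argument handles it uniformly.
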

\begin{proof}
 If $p=q$, the statement is true, by normality.
Assume now that $p\neq q$. Then
$l_{pq}=\max_{s\in [n]} ( a_{ps}+b_{sq})\le0$, and this maximum is attained at $s=q$, giving $l_{pq}=0+0=0$.
Similarly true for $r_{pq}$ and, as a  consequence, true for $c_{pq}$.
\end{proof}

\begin{cor}[Orthogonality by propagation]\label{cor:propagation}
Let  $A, B\in \norma n$. If $A\oplus B=Z_n$, then $C=Z_n$. \qed
\end{cor}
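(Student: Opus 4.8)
The plan is to reduce the whole statement to the propagation Lemma~\ref{lem:propagation}, exploiting the symmetry of both the hypothesis and the indicator matrix under the swap $A\leftrightarrow B$.

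First I would unpack the hypothesis entrywise. The tropical sum $A\oplus B$ is computed position by position as the maximum of corresponding entries, so the equality $A\oplus B=Z_n$ says precisely that $\max\{a_{ij},b_{ij}\}=0$ for every $(i,j)$. Since $A,B\in\norma n$ are normal, every entry is $\le 0$, and therefore this condition forces, for each $(i,j)$, that at least one of $a_{ij}$, $b_{ij}$ equals $0$.

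Next I would fix a position $(i,j)$ and split into two cases according to which entry vanishes. If $a_{ij}=0$, then Lemma~\ref{lem:propagation} immediately yields $c_{ij}=0$. If instead $b_{ij}=0$, I would invoke the same lemma with the roles of $A$ and $B$ interchanged: swapping $A$ and $B$ interchanges $L=AB$ and $R=BA$, so the indicator matrix $C$ is unchanged, and the conclusion of Lemma~\ref{lem:propagation} applied to $B$ reads $c_{ij}=0$ as well. Concretely, this is just the remark that the term $s=i$ in $l_{ij}=\max_{s\in[n]}(a_{is}+b_{sj})$ equals $a_{ii}+b_{ij}=0+0=0$, forcing $l_{ij}=0$ since $l_{ij}\le 0$, and symmetrically the term $s=j$ in $r_{ij}=\max_{s\in[n]}(b_{is}+a_{sj})$ gives $r_{ij}=0$. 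In either case $c_{ij}=0$.

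Since $(i,j)$ was arbitrary, every entry of $C$ is $0$, that is $C=Z_n$, which is the assertion. I do not anticipate any genuine obstacle: the corollary is an essentially immediate consequence of propagation, and the only point requiring a word of care is the $A\leftrightarrow B$ symmetry, needed to cover the case in which the vanishing entry happens to lie in $B$ rather than in $A$.
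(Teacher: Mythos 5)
Your proof is correct and matches the argument the paper intends (the corollary is stated with no written proof, being regarded as immediate from Lemma~\ref{lem:propagation}): $A\oplus B=Z_n$ forces $a_{ij}=0$ or $b_{ij}=0$ at every position, and propagation — applied to $A$ or, by the $A\leftrightarrow B$ symmetry of the indicator matrix, to $B$ — gives $c_{ij}=0$. Your explicit handling of the case where the zero lies in $B$ is exactly the small point the paper leaves implicit.
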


\begin{ex} [Easy orthogonality 1]  \label{ex:easy_1}
Let $A,B\in M_n^N$ be  matrices such that the number of zeros in every row and every column of each matrix is strictly greater than $n/2$. Then $A$ and $B$ are mutually orthogonal.
\emph{Indeed,  by symmetry, it is enough to prove $L=AB=Z$.
Let $r=[r_i]$  (resp. $l=[l_j]$) be an arbitrary row  of $A$ (resp. an arbitrary column  of $B$). Since multiplication in the tropical semiring is the sum, addition is the maximum, and all $r_i,l_j$ are non-positive, one obtains that  $rl=0$ if and only if there exists  $i\in [n]$ with $r_i=l_i=0$. Since more than the half of the entries of $r$ and $l$ are zero, such $i$ always does exist.}
\end{ex}

Below we show that the hypotheses of  Example \ref{ex:easy_1} are indispensable.

\begin{ex}
\begin{enumerate}
\item  \emph{The condition that the number of zeros in any row of $A$ is strictly greater than $n/2$ (only for the rows) is not sufficient for the orthogonality. The same holds only for the columns.}  Indeed, with Notations \ref{nota:elementary}, for  the matrix
  $A:=E_{1n}+\cdots + E_{n-1,n}\in\norma n$,
the entry $(1,n)$ of $A^2$ is equal to $-1$, so $A$ is not self--orthogonal.
\item \emph{The condition that exactly the half of the entries of all rows and columns of $A$ are zero is not sufficient for the orthogonality.} Indeed,  for  $n=2k$ consider the matrix    $A:=\left[ \begin{array}{cc}
Z&U\\
U&Z
\end{array}\right]\in \norma n$ .   Then the entry $(1,n)$ of $A^2$ is $-1$, so $A$ is not self--orthogonal.
\end{enumerate}
\end{ex}

\begin{ex}[Easy orthogonality 2]\label{ex:easy_2}
 Let $n\ge4$, $A=[a_{ij}], B=[b_{ij}]\in M_n^N$ and $$a_{ij}=\begin{cases}
 0,\mbox{\ if\ \ } i=j \mbox{\ \ or\ \ } i+j\equiv2 \mod 3,\\
 -1,\text{\ otherwise,}
 \end{cases}
 b_{ij}=\begin{cases}
 0,\mbox{\ if\ \ } i+j\equiv  0 \mod 2,\\
 -1,\mbox{\ otherwise.}
 \end{cases}$$
 Then $A$ and $B$ are orthogonal. Notice that $AB=Z$ follows from the facts:
 \begin{enumerate}
 \item in each row of $A$ there is zero in an odd position and there is zero in an even position,
 \item if $j$ is even (resp. odd), then $b_{kj}=0$ for all even (resp. odd) $k$.
 \end{enumerate}
 Similarly $BA=Z$.
 \end{ex}

\begin{nota}[$\nu(A)$]\label{nota:nu}
For $A\in \norma n$, let $\nu(A)$ denote the number of zero entries in $A$. Since the diagonal of $A$ vanishes, we have $\nu(A)\ge n$.
\end{nota}

Here we use the big $O$ notation. In Corollary \ref{cor:propagation} and Examples \ref{ex:easy_1} and \ref{ex:easy_2},  the number of zeros in $A$ and $B$ is $O(n^2)$.
Indeed,  $\nu(A)\ge \frac{n^2}{2}$ and $\nu(B)\ge\frac{n^2}{2}$
in Example \ref{ex:easy_1}, and  $\nu(A)$
is $O(n^2)$  and $\nu(B)=\frac{n^2}{2}$ in Example \ref{ex:easy_2}.
We want to achieve orthogonality with fewer zeros, only $O(n)$.
Our  starting point  is the  following Remark (see
Lemma \ref{lem:starting_point} for a proof).

\begin{rem} [A sufficient condition for orthogonality] \label{rem:starting_point}
Let there exist $p,q\in[n]$,
such that the $p$--th row and the $q$--th column of $A$ and the $p$--th column and the
$q$--th row of $B$  are zero. Then $AB=Z_n=BA$.
\end{rem}

Below we introduce
the convenient notation $V(p;q)$ to express Remark \ref{rem:starting_point} in short.

\begin{nota}[$V(p;q)$, $W(p;q)$ and $Z(p;q)$]\label{nota:Vs}
 For $p,q\in[n]$,  we consider  some subsets of $M_n^N$
 \begin{enumerate}
\item $V(p;q):=\{A \in   \norma n: a_{pi}=0=a_{iq},\  i\in[n]\}$,
\item $W(p;q):=\{A\in \norma n: a_{pi}=0,\  i\in[n] \setminus \{q\} \}\cap \{A\in \norma n: a_{iq}=0, \  i\in[n]\setminus \{p\} \}$,
\item $Z(p;q):=\{A\in \norma n: a_{pq}=0\}$,
\item Let $V(\sucen ps;\sucen qt)$  be the intersection of all $V(p;q)$, with  $p\in \{\sucen ps\}\subseteq [n]$ and $q\in \{\sucen qt\}\subseteq [n]$.\label{item:intersection}
 \end{enumerate}
\end{nota}

\begin{rem}\label{rem:Vs}
It is straightforward from the definitions that
\begin{equation}\label{eqn:cases_of_equ}
V(p;q)=W(p;q)\cap Z(p;q),\qquad Z(p;p)=M_n^N \qquad \text{ and }\qquad  V(p;p)=W(p;p).
\end{equation}
\end{rem}
Next we restate and prove Remark \ref{rem:starting_point}.
\begin{lem}[A sufficient condition for orthogonality]\label{lem:starting_point}
Let
$A, B\in \norma n$ and $p,q\in[n]$.
If $A\in V(p;q)$ and $B\in V(q;p)$, then  $AB=Z_n=BA$.
\end{lem}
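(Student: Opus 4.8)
The plan is to compute the entries of $AB$ and $BA$ directly and, for each target entry, exhibit an index at which the defining maximum already attains the value $0$. Recall that for normal matrices every entry is $\le0$, so each summand $a_{is}+b_{sj}\le0$ and hence $l_{ij}=\max_{s\in[n]}(a_{is}+b_{sj})\le0$ for $L=AB=[l_{ij}]$; the same upper bound holds for $R=BA=[r_{ij}]$. Consequently, to prove $L=Z_n$ it suffices to produce, for every pair $(i,j)$, a single index $s$ with $a_{is}=b_{sj}=0$, and symmetrically for $R$. This is exactly the ``maximum attained at a zero'' mechanism already used in the proof of Lemma~\ref{lem:propagation}.

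First I would unpack the two hypotheses. From $A\in V(p;q)$ we get $a_{pi}=0=a_{iq}$ for all $i\in[n]$, that is, the entire $p$-th row and the entire $q$-th column of $A$ vanish. From $B\in V(q;p)$ we get $b_{qi}=0=b_{ip}$ for all $i\in[n]$, that is, the entire $q$-th row and the entire $p$-th column of $B$ vanish.

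For $AB$ I would take the witnessing index $s=q$: then $a_{iq}=0$ (the $q$-th column of $A$) and $b_{qj}=0$ (the $q$-th row of $B$), so $a_{iq}+b_{qj}=0$ and therefore $l_{ij}=0$ for every $(i,j)$, yielding $AB=Z_n$. For $BA$ I would instead take $s=p$: then $b_{ip}=0$ (the $p$-th column of $B$) and $a_{pj}=0$ (the $p$-th row of $A$), so $b_{ip}+a_{pj}=0$ and hence $r_{ij}=0$ for every $(i,j)$, yielding $BA=Z_n$.

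There is essentially no obstacle to overcome; the content of the statement is bookkeeping rather than difficulty. The only point requiring care is the correct pairing of indices: one must align the zero \emph{column} of $A$ with the zero \emph{row} of $B$ (through $s=q$) to kill all entries of $AB$, and dually align the zero column of $B$ with the zero row of $A$ (through $s=p$) to kill all entries of $BA$. I would also remark that the argument is insensitive to whether $p=q$ or $p\ne q$, so no case distinction is needed.
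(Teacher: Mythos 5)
Your proof is correct, and it rests on the same mechanism as the paper's: exhibit a witness index at which the defining maximum $\max_s(a_{is}+b_{sj})$ attains $0$. The difference is one of economy: the paper first invokes Lemma~\ref{lem:propagation} to dispose of the entries of $C$ lying in the rows and columns indexed by $p$ and $q$, and only then treats the remaining entries $i,j\in[n]\setminus\{p,q\}$ via the two-term lower bound $l_{ij}\ge b_{ij}\oplus b_{qj}$ (whose $t=q$ term is exactly your witness). You observe that the single witness $s=q$ (with $a_{iq}=0$ and $b_{qj}=0$) kills \emph{every} entry of $AB$ at once, and dually $s=p$ kills every entry of $BA$, so no case split and no appeal to the propagation lemma are needed; your remark that the argument is indifferent to whether $p=q$ is also accurate. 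The only thing the paper's longer route buys is the intermediate inequalities (\ref{eqn:AB}) and (\ref{eqn:BA}), which are reused verbatim in Lemma~\ref{lem:kind_of_converse}; your streamlined version would not supply those displayed bounds for later reference.
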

\begin{proof}
We will prove that the indicator matrix $C$ of the pair $(A,B)$ is equal to zero.
By Lemma \ref{lem:propagation}, we have $C\in V(p;q)\cap V(q;p)$.

Consider $i,j\in [n]\setminus\{p,q\}$ with $i\neq j$. Using $A\in V(p;q)$, we get
\begin{equation}\label{eqn:AB}
0\ge l_{ij}=\max_{t\in[n]} (a_{it}+b_{tj})\ge \max_{t=q, t=i} (a_{it}+b_{tj})=\max \{b_{ij},b_{qj}\}=b_{ij}\oplus b_{qj},
\end{equation}
\begin{equation}\label{eqn:BA}
0\ge r_{ij}=\max_{t\in[n]} (b_{it}+a_{tj})\ge \max_{t=j, t=p} (b_{it}+a_{tj})=\max \{b_{ij},b_{ip}\}=b_{ij}\oplus b_{ip}.
\end{equation}
Using $B\in V(q;p)$ we get $b_{qj}=b_{ip}=0$, whence the right hand sides of (\ref{eqn:AB}) and (\ref{eqn:BA}) are zero.
Thus $l_{ij}= r_{ij}=0$ whence $c_{ij}=0$.
\end{proof}

\begin{cor}\label{cor:self_orth}
If $A\in V(p;p)$ for some $p\in [n]$, then $A^2=Z_n$. \qed
\end{cor}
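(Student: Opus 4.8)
The plan is to obtain this as an immediate specialization of Lemma \ref{lem:starting_point}. First I would apply that lemma with the second matrix $B$ taken equal to $A$ and with the second index $q$ taken equal to $p$. The two hypotheses of the lemma, namely $A\in V(p;q)$ and $B\in V(q;p)$, then both collapse to the single requirement $A\in V(p;p)$. Indeed, for $q=p$ the set $V(q;p)$ coincides with $V(p;p)$, since by Notation \ref{nota:Vs}(1) both describe exactly the normal matrices whose $p$-th row and $p$-th column vanish. This single requirement is precisely our hypothesis, so both conditions needed to invoke the lemma are verified.

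With the hypotheses in place, Lemma \ref{lem:starting_point} yields $AB=Z_n=BA$, which under the substitution $B=A$ reads $A^2=Z_n$, as claimed. There is no genuine obstacle here: the only point worth noticing is that the ordered pair of indices $(p;q)$ appearing in $V(p;q)$ becomes symmetric once $q=p$, so that the two separate hypotheses of the lemma merge into the one condition $A\in V(p;p)$. Everything else is a direct substitution, requiring no further computation.
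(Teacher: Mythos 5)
Your proposal is correct and matches the paper's intended argument: the corollary is stated with an immediate \qed precisely because it is the specialization $B=A$, $q=p$ of Lemma \ref{lem:starting_point}, exactly as you describe. Nothing further is needed.
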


\begin{dfn}[Genericity]\label{dfn:generic}
Let $S\subseteq \norma n$ be a subset. The matrix $A\in S$
is  called \emph{$S$--generic}  if   no entry of $A$ is zero, unless it is required by the structure of $S$.
\end{dfn}
 Recall that $R=\{0,-1\}$.
\begin{dfn}
For $p,q\in[n]$, the \emph{indicator function of the pair $(p,q)$ of indices} is $c(p,q)=\begin{cases}
0,& \text{\ if\  } p\neq q\\
-1,&\text{\ otherwise.}
\end{cases}$
\end{dfn}

\begin{lem}\label{lem:nu} With $p,q\in[n]$ and Notation~\ref{nota:nu},
\begin{enumerate}
\item  there exists a unique $V(p;q)$--generic  matrix  $A\in M_n^N$ and
$\nu(A)-n=2n-3-c(p,q)$,

\item there exists a unique $W(p;q)$--generic  matrix  $A\in M_n^N$ and
$\nu(A)-n=2n-4-2c(p,q),$

\item  there exists a unique $W(p;q)\cap Z(p;q)\cap Z(q;p)$--generic  matrix  $A\in M_n^N$ and $\nu(A)-n=
 2n-2.$
\end{enumerate}
\end{lem}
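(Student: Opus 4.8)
The plan is to compute, for each of the three generic matrices, exactly which off-diagonal entries are forced to be zero by the defining structure of the set, and then apply Definition~\ref{dfn:generic}, which dictates that all remaining off-diagonal entries equal $-1$. Thus $\nu(A)$ counts precisely the $n$ diagonal zeros plus the structurally-forced off-diagonal zeros, and computing $\nu(A)-n$ amounts to counting the latter, being careful to avoid double-counting entries that are forced to zero for more than one reason.

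First I would handle Item~(1). By Notation~\ref{nota:Vs}, $A\in V(p;q)$ means $a_{pi}=0$ for all $i\in[n]$ and $a_{iq}=0$ for all $i\in[n]$; that is, the entire $p$-th row and the entire $q$-th column vanish, and there are no other constraints, so $V(p;q)$-genericity forces every remaining off-diagonal entry to be $-1$. I count the off-diagonal zeros as follows: the $p$-th row contributes $n-1$ off-diagonal zeros and the $q$-th column contributes $n-1$ off-diagonal zeros. The main subtlety is the overlap. If $p\neq q$, then the entry $a_{pq}$ lies in both the row and the column and is counted twice, while the two diagonal entries $a_{pp}$ and $a_{qq}$ already sit in the row and column respectively but are excluded from the off-diagonal count; subtracting the single double-counted entry $a_{pq}$ gives $2(n-1)-1=2n-3$ off-diagonal zeros, matching $2n-3-c(p,q)$ since $c(p,q)=0$. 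If $p=q$, the row and the column share the single vanishing line through the diagonal, so their off-diagonal parts are disjoint and contribute $2(n-1)=2n-2$ off-diagonal zeros, which matches $2n-3-c(p,q)=2n-3-(-1)=2n-2$. This confirms the unified formula $\nu(A)-n=2n-3-c(p,q)$.

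For Item~(2), the set $W(p;q)$ relaxes $V(p;q)$ by removing the two requirements $a_{pq}=0$ and $a_{qp}=0$: the $p$-th row must vanish except possibly at column $q$, and the $q$-th column must vanish except possibly at row $p$. For a $W(p;q)$-generic matrix these two freed entries are therefore $-1$ rather than $0$. When $p\neq q$, the row now contributes $n-2$ off-diagonal zeros (all off-diagonal row entries except $a_{pq}$) and the column contributes $n-2$ off-diagonal zeros (all except $a_{qp}$); since $a_{pq}$ and $a_{qp}$ were precisely the freed entries, the row-zeros and column-zeros are now disjoint, giving $2(n-2)=2n-4$ off-diagonal zeros, matching $2n-4-2c(p,q)$ with $c(p,q)=0$. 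When $p=q$, Remark~\ref{rem:Vs} gives $V(p;p)=W(p;p)$, so I simply reuse Item~(1) to get $2n-2$, and indeed $2n-4-2c(p,p)=2n-4+2=2n-2$ agrees.

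For Item~(3), I would intersect $W(p;q)$ with $Z(p;q)$ and $Z(q;p)$, which reimpose $a_{pq}=0$ and $a_{qp}=0$. By Remark~\ref{rem:Vs}, $V(p;q)=W(p;q)\cap Z(p;q)$, so $W(p;q)\cap Z(p;q)\cap Z(q;p)=V(p;q)\cap Z(q;p)$; geometrically this is $V(p;q)$ with the additional forced zero at position $(q,p)$. Note that the stated formula $\nu(A)-n=2n-2$ has no $c(p,q)$ term, which signals that this item is intended for $p\neq q$ (when $p=q$ the conditions $Z(p;q)$ and $Z(q;p)$ are vacuous by $Z(p;p)=M_n^N$ and the count would collapse to Item~(1)). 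So assuming $p\neq q$, I start from the $2n-3$ off-diagonal zeros of $V(p;q)$ and add the one extra forced zero at $(q,p)$, which is distinct from all previously counted entries, yielding $2n-3+1=2n-2$. In each item, the existence and uniqueness of the generic matrix is immediate: genericity determines every entry (zero where structurally forced, $-1$ otherwise), so exactly one such matrix exists. The only real care needed throughout is the inclusion--exclusion bookkeeping of overlapping rows and columns, which is the step most prone to off-by-one error.
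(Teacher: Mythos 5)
Your proof is correct and follows the same direct-counting approach as the paper, which merely notes that the generic matrix has a zero row, a zero column and a zero main diagonal with all remaining entries equal to $-1$ (and dismisses Items 2 and 3 as ``similar''); your version supplies the inclusion--exclusion bookkeeping and the $p=q$ versus $p\neq q$ case analysis that the paper leaves implicit. One small slip in Item 2: the entry of the $q$-th column exempted by the definition of $W(p;q)$ is $a_{pq}$ (the very same entry exempted by the row condition), not $a_{qp}$ --- the entry $a_{qp}$ was never constrained by $V(p;q)$ in the first place --- but this mislabeling does not affect your count of $n-2$ column zeros, the disjointness argument, or the final formula $2n-4$.
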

\begin{proof}
1. We have a zero row, a zero column and the zero main  diagonal in $A$, and the remaining entries are $-1$.	

Items 2 and 3 are similar.
\end{proof}

\begin{lem}\label{lem:kind_of_converse}
Let 
$A, B\in \norma n$ and  $p,q\in[n]$ with $p\neq q$. If $A$  is $V(p;q)$--generic and  $AB=Z_n=BA$,
then  (\ref{eqn:AB}) and (\ref{eqn:BA}) are equalities.  In particular, $b_{ij}=0$ or $b_{qj}=0=b_{ip}$, all $i,j\in[n]$.
\end{lem}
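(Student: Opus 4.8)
The plan is to leverage the genericity of $A$ to force an entire row and an entire column of $B$ to vanish, and then read off both the asserted equalities and the disjunction as immediate consequences. Recall that a $V(p;q)$--generic $A$ has $a_{ij}=0$ precisely when $i=j$, $i=p$, or $j=q$, and $a_{ij}=-1$ otherwise (Definition~\ref{dfn:generic} and Notation~\ref{nota:Vs}); this exact description of the zero pattern of $A$ is what drives the argument. Note also that, since genericity gives $A\in V(p;q)$, the derivations of (\ref{eqn:AB}) and (\ref{eqn:BA}) in the proof of Lemma~\ref{lem:starting_point} used only $a_{iq}=a_{ii}=0$ and $a_{pj}=a_{jj}=0$, so those two inequality chains in fact hold for all $i,j\in[n]$, not merely for $i,j\notin\{p,q\}$.

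First I would isolate row $q$ and column $p$ of $A$. Because $p\neq q$, genericity makes these lines \emph{pure}: the entry $a_{qt}$ can vanish only when $q=p$ (row), $t=q$ (column), or $q=t$ (diagonal); the first is excluded since $p\neq q$, and the remaining two both force $t=q$, so row $q$ of $A$ carries its single zero at $(q,q)$. Expanding $l_{qj}=\max_{t\in[n]}(a_{qt}+b_{tj})$, the term $t=q$ contributes $b_{qj}$, while every term $t\neq q$ contributes $a_{qt}+b_{tj}\le -1<0$; hence $l_{qj}=b_{qj}$. Orthogonality ($AB=Z_n$) gives $l_{qj}=0$, so $b_{qj}=0$ for all $j\in[n]$. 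The symmetric computation on column $p$ of $A$, whose unique zero sits at $(p,p)$, yields $r_{ip}=b_{ip}$, and $r_{ip}=0$ forces $b_{ip}=0$ for all $i\in[n]$. In other words, $B\in V(q;p)$.

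With row $q$ and column $p$ of $B$ now known to vanish, the chains (\ref{eqn:AB}) and (\ref{eqn:BA}) collapse. Their right--hand sides become $b_{ij}\oplus b_{qj}=b_{ij}\oplus 0=0$ and $b_{ij}\oplus b_{ip}=b_{ij}\oplus 0=0$, while their left--hand sides $l_{ij},r_{ij}$ equal $0$ by orthogonality; thus every inequality in both chains is an equality, which is the main assertion. The ``in particular'' clause is then immediate: since the second alternative $b_{qj}=0=b_{ip}$ holds for \emph{all} $i,j\in[n]$, the disjunction $b_{ij}=0$ or $b_{qj}=0=b_{ip}$ is satisfied throughout.

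The one step I would treat with care is the interplay of $p\neq q$ and genericity guaranteeing that row $q$ and column $p$ are pure, since this is exactly what upgrades the scalar identity $l_{qj}=0$ into the vanishing of the whole entry $b_{qj}$; without $p\neq q$ the line in question could coincide with the all--zero row $p$ or column $q$ and carry no information. The apparently delicate boundary indices $i=p$ and $j=q$, where a line of $A$ is identically zero and the corresponding product entry vanishes for free, cause no trouble, because the conclusion is driven by the vanishing lines of $B$ rather than by those degenerate products; for the same reason the argument covers all $i,j\in[n]$ uniformly, as claimed.
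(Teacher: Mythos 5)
Your proof is correct, and it takes a genuinely different route from the paper's, arriving at a strictly stronger conclusion. The paper argues entry by entry: for pairwise distinct $i,j,p,q$ it expands $0=l_{ij}=\max_{t}(a_{it}+b_{tj})$ and uses genericity to see that every term is strictly negative except possibly $t=i$ or $t=q$, so the maximum is attained there and $b_{ij}=0$ or $b_{qj}=0$ (similarly $b_{ij}=0$ or $b_{ip}=0$ from $r_{ij}$); the degenerate index configurations are dismissed by normality. You instead exploit the two \emph{pure} lines of the generic $A$ --- row $q$ and column $p$, each carrying a single (diagonal) zero since $p\neq q$ --- to get the exact identities $l_{qj}=b_{qj}$ and $r_{ip}=b_{ip}$, so that orthogonality forces $B\in V(q;p)$ outright; the equalities in (\ref{eqn:AB}), (\ref{eqn:BA}) and the stated disjunction then follow because the second disjunct $b_{qj}=0=b_{ip}$ holds for \emph{all} $i,j$. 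Your counting of the zero pattern of a $V(p;q)$--generic matrix ($a_{ij}=0$ iff $i=j$, $i=p$ or $j=q$) is consistent with Lemma~\ref{lem:nu}, and your observation that the chains (\ref{eqn:AB})--(\ref{eqn:BA}) are valid for all $i,j$ is accurate. What each approach buys: yours is cleaner and reveals the structural fact $B\in V(q;p)$ (which, via Lemma~\ref{lem:starting_point}, would even recover orthogonality), whereas the paper's local, disjunctive argument is the one that generalizes to the later cost/gift--zero bookkeeping, where only a few isolated zeros of $A$ are known rather than an entire generic row and column, and only a disjunction can be extracted.
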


\begin{proof}
By hypothesis,  the indicator matrix   $C$ of the pair $(A,B)$ is zero, which implies $L=AB=Z=BA=R$.
The thesis is trivial, if $i,j,p,q$ are not pairwise different, by normality.

Suppose now that $i,j,p,q\in[n]$ are pairwise different.
The equality $L=Z$ implies $0=l_{ij}$.
By genericity, we know that $A\in V(p;q)$ and $a_{ij}<0$,  whenever $i\neq j$ and $i\neq p$ and $j\neq q$.  Then, in
$0=l_{ij}=\max_{t\in[n]} (a_{it}+b_{tj})$, every term on the right hand side is strictly negative except, perhaps, for $t=i$ or $t=q$.
It follows that the maximum, which is zero,  is attained at $t=i$ or $t=q$ and, furthermore,  $b_{ij}=0$ or $b_{qj}=0$. We conclude that $b_{ij}\oplus b_{qj}=0$ and (\ref{eqn:AB}) is a chain  of equalities. Similarly, we prove $b_{ij}\oplus b_{ip}=0$.
\end{proof}
In Lemma \ref{lem:kind_of_converse} we have proved that mutual orthogonality of $A$ and $B$ together with $O(n)$ \emph{aligned zeros} in
$A$ force some entries in $B$ to vanish.
\label{fact:key_for_gift} This  key observation leads us to  the notions  of cost and gift zeros given below.

\section{Minimal number of zeros in pairs}
\label{sec:minimal}
 In this section we assume  $A,B\in \norma n$.
Our goal is
to find necessary
and sufficient conditions on the pair $(A,B)$ for minimal
orthogonality, i.e.,  the orthogonality with matrices $A$  and $B$ having minimal number of zeros.

\begin{dfn}\label{dfn:minimal_pair}
Let $\Theta_n$ be the minimal number of off--diagonal zero entries among all pairs of  mutually orthogonal matrices in $\norma n$.  With Notation \ref{nota:nu},
$$\Theta_n=\min\limits_{A, B\in \norma n}\{\nu(A)+\nu(B)-2n \ :\  AB=Z_n=BA\}.$$
Let $A, B \in \norma n$.
The pair $(A, B)$ is called  {\em minimal\/} if it
realizes the value of $\Theta_n$, i.e., if
$\Theta_n=\nu(A)+\nu(B)-2n$   and $AB=Z_n=BA$.
\end{dfn}

\begin{rem}\label{rem:upper_bound}
We have $\Theta_n\le n^2 - n$,  since the pair $(Z_n,B)$, with $B$ strictly normal, satisfies
$\nu(Z_n)=n^2$ and $\nu(B)=n$.
\end{rem}

\begin{lem}\label{lem_dfn:gift}
Let $A=[a_{ij}], B=[b_{ij}]\in \norma n$
and let $C=[c_{ij}]$ be the indicator matrix of $(A,B)$.   If  $s,t,k,m\in~[n]$ are pairwise different integers such that  $a_{sk}=b_{kt}=b_{sm}=a_{mt}=0$,
then  $c_{sk}=c_{kt}=c_{sm}=c_{mt}=c_{st}=0$.
\end{lem}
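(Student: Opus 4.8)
The plan is to split the five target entries into two groups: the four entries $c_{sk},c_{kt},c_{sm},c_{mt}$, each of which sits directly above a prescribed zero of $A$ or of $B$, and the single entry $c_{st}$, whose vanishing is the genuine content of the lemma. For the first group I would simply invoke propagation of zeros. The hypotheses $a_{sk}=0$ and $a_{mt}=0$ give $c_{sk}=0$ and $c_{mt}=0$ immediately by Lemma \ref{lem:propagation}. The remaining two, $c_{kt}=0$ and $c_{sm}=0$, rest on the zeros $b_{kt}=0$ and $b_{sm}=0$, which lie in $B$ rather than in $A$; here I would use the $B$-analogue of Lemma \ref{lem:propagation}. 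This analogue holds by the evident symmetry of the indicator construction: interchanging $A$ and $B$ swaps $L=AB$ with $R=BA$ but leaves $C$ unchanged (since $c_{ij}=0$ is the symmetric condition $l_{ij}=r_{ij}=0$), so a zero of $B$ in position $(p,q)$ forces $c_{pq}=0$ exactly as a zero of $A$ does; alternatively one checks it by the same one-line computation.

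The heart of the lemma is the equality $c_{st}=0$, the \emph{gift} zero. Here neither $a_{st}$ nor $b_{st}$ need vanish, so propagation does not apply; instead the conclusion is produced jointly by the four prescribed zeros. Recall that $c_{st}=0$ is equivalent to $l_{st}=0$ and $r_{st}=0$, and that $l_{st}=\max_{u\in[n]}(a_{su}+b_{ut})$ and $r_{st}=\max_{u\in[n]}(b_{su}+a_{ut})$ are automatically $\le 0$ because all entries of $A$ and $B$ are non-positive. Thus it suffices to exhibit, in each maximum, a single index yielding the summand $0$. For $l_{st}$ I would take $u=k$, which gives $a_{sk}+b_{kt}=0+0=0$, whence $l_{st}=0$; for $r_{st}$ I would take $u=m$, which gives $b_{sm}+a_{mt}=0+0=0$, whence $r_{st}=0$. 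Therefore $c_{st}=0$.

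Since these computations are immediate once the correct summation index is singled out, I do not expect a serious obstacle; the only point needing care is the bookkeeping of which hypothesised zero feeds into which maximum, namely that $k$ bridges $s\to t$ along a row of $A$ meeting a column of $B$, while $m$ bridges $s\to t$ along a row of $B$ meeting a column of $A$. The pairwise distinctness of $s,t,k,m$ plays no role in the arithmetic itself; it serves only to ensure that $(s,k),(k,t),(s,m),(m,t),(s,t)$ are five genuinely distinct off-diagonal positions, so that the lemma records five vanishing off-diagonal entries of $C$ rather than fewer.
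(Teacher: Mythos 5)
Your proof is correct and follows essentially the same route as the paper: the four entries $c_{sk},c_{kt},c_{sm},c_{mt}$ vanish by propagation (Lemma \ref{lem:propagation} and its evident $B$-analogue), and $c_{st}=0$ follows by evaluating the tropical products $l_{st}$ and $r_{st}$ at the indices $k$ and $m$ respectively. You are in fact slightly more careful than the paper, which cites Lemma \ref{lem:propagation} for all four propagation zeros even though, as stated, it only covers zeros of $A$; your symmetry remark fills that small gap.
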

\begin{proof}
We get  $c_{sk}=c_{kt}=c_{sm}=c_{mt}=0$,  by Lemma~\ref{lem:propagation}, and get   $c_{st}=0$,
by the definition of tropical multiplication.
\end{proof}

The next definition classifies the entries of indicator matrices.
\begin{dfn}[Propagation,  cost and gift zeros]\label{dfn:gift}
Let $A=[a_{ij}], B=[b_{ij}]\in \norma n$ and  $C=[c_{ij}]$ be the indicator matrix of $(A,B)$.
Let $s,t\in[n]$ with $s\neq t$.
\begin{enumerate}
\item If $a_{st}=0$ or $b_{st}=0$, then $c_{st}=0$ is called a {\em propagation zero\/}. Let  $\propa(C)$ denote the number of propagation zeros in   $C$. \label{item:propa}
\item \label{item:cost_def}If there exists $k\in [n]$ such that $s,t,k$ are pairwise different integers, and $a_{st}\neq 0\neq b_{st}$, and $a_{sk}=b_{kt}=b_{sk}=a_{kt}=0$, then $c_{st}=0$ is called a {\em cost zero\/}. For each such $k$ we can use the notation
\begin{equation}\label{eqn:cost}
c_{st}:=\phi_{st}^{kk}.
\end{equation}
\item \label{item:gift_def}If $c_{st}$ is not a cost zero and there exist $k,m\in [n]$ such that $s,t,k,m$ are pairwise different integers, and $a_{st}\neq 0\neq b_{st}$, and $a_{sk}=b_{kt}=b_{sm}=a_{mt}=0$, then $c_{st}=0$ is called a {\em gift zero\/}. For each such $k, m$ we can use the notation
\begin{equation}\label{eqn:gift}
c_{st}:=\phi_{st}^{km}.
\end{equation}
Let  $\gift(C)$ denote the number of gift zeros in~$C$.
\end{enumerate}
\end{dfn}

In plain words,  assume $a_{st}\neq 0\neq b_{st}$. Then \emph{two} zero entries ($a_{sk}=a_{mt}=0$)  in   $A$, together with
\emph{two} zero entries ($b_{kt}=b_{sm}=0$)  in   $B$ provide \emph{five} zero entries in   $C$:  a \emph{gift zero}  in the
$(s,t)$ position  and four \emph{propagation zeros}    in the $(s,k)$, $(k,t)$, $(s,m)$ and $(m,t)$ positions.
In particular, \emph{carefully placed propagation zeros are attached  to gift zeros, and conversely.}

\begin{rem}
The only difference between Items~\ref{item:cost_def} and \ref{item:gift_def} in Definition \ref{dfn:gift} is whether $k=m$ or not. Moreover, the notation $c_{st}=\phi_{st}^{km}$ is general and, in the case $k = m$, it means a cost zero.
\end{rem}

\begin{dfn}[Duplicates]\label{dfn:duplicates}
If $A\neq B$, a \emph{duplicate} in the pair $(A,B)$ is a position $(s,t)$ with $a_{st}=0=b_{st}$ and $s\neq t$.
\end{dfn}

\begin{nota}[$\Sigma(A,B,i)$ and $\Sigma(A,B)$]
Denote by $\ro A i$ the $i$--th row of the matrix $A$, and by $\nur A i$ the number of zeros in the $i$--th row of $A$.
For a given pair $(A,B)$ with $A\neq B$, denote by $\Sigma(A,B,i)$ the sum $\nur A i+\nur B i-2$ and by $\Sigma(A,B)$ the sum $\nu(A)+\nu(B)-2n$.
We simply write $\Sigma(i)$ or $\Sigma$, whenever the pair $(A,B)$ is understood from the context.
Note that $\Sigma(A,B,i)$ stands for the number of off--diagonal zeros in the $i$--th row of $A$ and $B$. With these notations $\Theta_n$ from Definition~\ref{dfn:minimal_pair} transforms into $$\Theta_n=\min\limits_{A, B\in \norma n}\{\Sigma(A,B) \ :\  AB=Z_n=BA\}.$$
\end{nota}

\begin{rem}\label{rem:exclusive}
	\begin{enumerate}
		\item Propagation, cost  and gift zeros are zeros in the indicator matrix $C$ of a pair $(A,B)$.
		\item The indicator matrix $C$ of a pair of normal matrices is normal and, therefore, $C$ has zero diagonal.
		So zeros in $C$ can
		be either diagonal, propagation, cost  or gift, and these are  mutually exclusive variants. \label{item:exclusive}
		\item  It follows from Definition \ref{dfn:gift}
that the number of cost zeros in any row of $C$ is less
than 		or equal to $n-2$, and 		
the number of propagation  zeros in  any row of $C$   is at least $1$,
 if a cost zero exists in that row. We say that $\ro C i$ is a cost row if it contains $n-2$ cost zeros and one propagation zero. Similar for columns. 		 \label{item:exclusive_cost}
		\item  It follows from Definition \ref{dfn:gift} that the number of gift zeros in any row of $C$ is less than
		or equal to $n-3$, and
		the number of propagation  zeros in  any row of $C$
 is at least $2$,
if a gift zero exists in that row. We say that $\ro C i$ is a gift row if it contains $n-3$ gift zeros and 2 propagation zeros and $\Sigma(i) = 2$. Similar for columns.
		\label{item:exclusive_gift}
        \item Gift zeros do not exist when $A=B$. Gift zeros do not exist when $n\le3$.\label{item:gift_dont_exist}
		\item A cost zero $c_{st}=\phi_{st}^{kk}$ requires $2$ duplicates $(s,k)$ and $(k,t)$ in the pair $(A, B)$.
        Note that $\Sigma(s)\ge2$ and $\Sigma(k)\ge2$.
		\label{item:2_duplicates}
		\item It is possible to have $c_{st}=\phi_{st}^{km}=\phi_{st}^{k'm'}$,
		with  $(k,m) \ne (k',m')$
		(valid for cost and gift zeros.)
		\label{item:diff_k_m}
\item If $\ro C s$ is a {cost row}, then there exists $k\in [n] \setminus \{s\}$ such that $c_{st}=\phi_{st}^{kk}$, for
all $t \in [n] \setminus \{s, k\}$. (Indeed, if for some $t_1, t_2, k_1, k_2 \in [n]$, with $k_1 \neq k_2$, we have $c_{st_1}=\phi_{st_1}^{k_1k_1}$ and $c_{st_2}=\phi_{st_2}^{k_2k_2}$, then $c_{sk_1}, c_{sk_2}$ are propagation zeros,
but $\ro C s$ contains only one propagation zero, contradiction.) We say that  the row is  {$k$--cost} to indicate
this dependence on $k$. \label{item:cost_row}
\item If  $\ro C s$ is a  {gift row},  then there exist $k,m\in [n]\setminus \{s\}$ with $k\neq m$ such
that $c_{st}=\phi_{st}^{km}$, for all $t \in [n] \setminus \{s,k,m\}$. (Similar proof to Item~\ref{item:cost_row}). We say that
the row is {$km$--gift} to indicate this dependence on $k$ and $m$. \label{item:gift_row}
\end{enumerate}
\end{rem}

\begin{nota}
Let propagation zeros be marked {\color{blue} blue} and diagonal zeros be marked {\color{red}red}, for a better visualization.
The symbol $\np$ denotes any element in $R=\{0,-1\}$.
\end{nota}

\begin{ex} \label{ex:1}
Consider any pair $(A,B)$ (not necessarily orthogonal) in $\norma 6$. Taking $(k,m)=(3,4)$ and  assuming
$a_{s3}=b_{3t}=b_{s4}=a_{4t}=0$ for different values of $s,t\in[6]$, we get different indicator matrices $C, C', C''$ of the pair $(A,B)$, where $C$ corresponds to the choice $s=1$ and $t\in \{2,5\}$, $C'$ corresponds to $s=1$ and  $t\in \{2,5,6\}$, and  $C''$ corresponds to $s\in \{1,5,6\}$ and $t=2$.
$$C=\begin{bmatrix}
{\color{red}0}& \phi_{12}^{34} & {\color{blue}0}& {\color{blue}0}& \phi_{15}^{34} &\np \\
\np  &{\color{red}0}& \np  & \np  & \np  & \np \\
\np  & {\color{blue}0}&{\color{red}0}& \np  & {\color{blue}0}& \np \\
\np  & {\color{blue}0}& \np  &{\color{red}0}& {\color{blue}0}& \np \\
\np  & \np  & \np  & \np  & {\color{red}0}& \np \\
\np  & \np  & \np  & \np  & \np  & {\color{red}0}
\end{bmatrix}, \quad  C'=\begin{bmatrix}
{\color{red}0}& \phi_{12}^{34} & {\color{blue}0}& {\color{blue}0}& \phi_{15}^{34} & \phi_{16}^{34}\\
\np  &{\color{red}0}& \np  & \np  & \np  & \np \\
\np  & {\color{blue}0}&{\color{red}0}& \np  & {\color{blue}0}& {\color{blue}0} \\
\np  & {\color{blue}0}& \np  &{\color{red}0}& {\color{blue}0}& {\color{blue}0} \\
\np  & \np  & \np  & \np  &{\color{red}0}& \np \\
\np  & \np  & \np  & \np  & \np  &{\color{red}0}
\end{bmatrix}, \quad C''=\begin{bmatrix}
{\color{red}0}& \phi_{12}^{34} & {\color{blue}0}& {\color{blue}0}& \np  & \np \\
\np  &{\color{red}0}& \np  & \np  & \np  & \np \\
\np  & {\color{blue}0}&{\color{red}0}& \np  & \np  & \np \\
\np  & {\color{blue}0}& \np  &{\color{red}0}& \np  & \np \\
\np  & \phi_{52}^{34} & {\color{blue}0}& {\color{blue}0}&{\color{red}0}& \np \\
\np  & \phi_{62}^{34} & {\color{blue}0}& {\color{blue}0}& \np  &{\color{red}0}
\end{bmatrix}.$$
\end{ex}

\begin{rem}[Bounds]\label{rem:bounds}
If $A\neq B$ then $\max\{\nu(A),\nu(B)\}-n\le \propa(C)\le \Sigma(A,B)$ holds for arbitrary pairs $(A,B)$, the second inequality being an
equality if and only if no duplicates exist.  This follows from Item \ref{item:propa} in Definition \ref{dfn:gift}. To get minimal pairs one has to avoid duplicates, as much as possible, i.e., one has to  minimize the gap between $\propa(C)$ and $\Sigma(A,B)$. If $A=B$ then $\nu(A)-n=\propa(C)$.
\end{rem}

\begin{cor}\label{cor:Theta_estimation} If  $n \ge 2$ then $\Theta_n \le 4n-6$.
\end{cor}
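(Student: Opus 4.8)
The plan is to exhibit, for every $n\ge2$, a single mutually orthogonal pair $(A,B)$ whose off--diagonal zero count $\Sigma(A,B)=\nu(A)+\nu(B)-2n$ is exactly $4n-6$. Since $\Theta_n$ is by Definition~\ref{dfn:minimal_pair} the minimum of $\Sigma(A,B)$ taken over all orthogonal pairs, producing any such witness immediately gives the upper bound $\Theta_n\le4n-6$. The natural source of orthogonal pairs is the sufficient condition of Lemma~\ref{lem:starting_point}, which guarantees $AB=Z_n=BA$ whenever $A\in V(p;q)$ and $B\in V(q;p)$ for some $p,q\in[n]$. To keep the zero count as small as possible, I would take $A$ and $B$ to be the \emph{generic} members of these two classes, so that no zeros appear beyond those forced by the structure of $V(p;q)$ and $V(q;p)$.

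Concretely, because $n\ge2$ I can pick two \emph{distinct} indices $p\neq q$ in $[n]$. Let $A$ be the unique $V(p;q)$--generic matrix and $B$ the unique $V(q;p)$--generic matrix; both exist by Lemma~\ref{lem:nu}. Since $A\in V(p;q)$ and $B\in V(q;p)$, Lemma~\ref{lem:starting_point} yields $AB=Z_n=BA$, so the pair is mutually orthogonal. For the count, the first assertion of Lemma~\ref{lem:nu} gives $\nu(A)-n=2n-3-c(p,q)$, and the indicator value $c(p,q)$ vanishes precisely because $p\neq q$; symmetrically $\nu(B)-n=2n-3-c(q,p)$ with $c(q,p)=0$. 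Hence $\nu(A)-n=\nu(B)-n=2n-3$, and
\begin{equation}
\Sigma(A,B)=\nu(A)+\nu(B)-2n=(2n-3)+(2n-3)=4n-6,
\end{equation}
which establishes $\Theta_n\le4n-6$.

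There is essentially no obstacle in this direction: the argument is a direct assembly of the two lemmas already proved. The only place where the hypothesis $n\ge2$ enters is the existence of a pair of distinct indices $p\neq q$, which forces $c(p,q)=c(q,p)=0$ and is exactly what saves one zero in each of $A$ and $B$ relative to the self--orthogonal case $p=q$ treated by Corollary~\ref{cor:self_orth}. The genuine difficulty lies not here but in the converse inequality $\Theta_n\ge4n-6$, i.e.\ in verifying that this witness is optimal; that requires the propagation/cost/gift zero bookkeeping developed in the surrounding results and is outside the scope of the present statement.
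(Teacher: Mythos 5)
Your proof is correct and is essentially the paper's own argument: the paper likewise takes a $V(p;q)$--generic $A$ and a $V(q;p)$--generic $B$ with $p\neq q$ and combines Lemma~\ref{lem:starting_point} with Lemma~\ref{lem:nu}. You have merely written out the zero count $\nu(A)-n=\nu(B)-n=2n-3$ that the paper leaves implicit.
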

\begin{proof} Consider a $V(p;q)$--generic matrix $A$  and a $V(q;p)$--generic matrix $B$, with different $p,q\in[n]$, and apply
Lemmas~\ref{lem:starting_point}  and  \ref{lem:nu}.
\end{proof}

Below we introduce the set $\MM_{km}$ which we need for the later description of minimal pairs.
\begin{nota}[$\MM_{km}$]\label{nota:MM_km} For $k, m \in [n]$,  a pair $(A,B)$  belongs to $\MM_{km}$ if
\begin{enumerate} \setcounter{enumi}{-1}
\item $A$ is $V(m;k)$--generic and $B$ is $V(k;m)$--generic, or \label{item:min0}
\item $A$ is $W(m;k)\cap Z(k;m)$--generic and $B$ is $W(k;m)\cap Z(m;k)$--generic, or \label{item:min1}
\item $A$ is $W(m;k)$--generic and $B$ is $W(k;m)\cap Z(m;k)\cap Z(k;m)$--generic, or \label{item:min2}
\item $A$ is $W(m;k)\cap Z(m;k)\cap Z(k;m)$--generic and $B$ is $W(k;m)$--generic. \label{item:min3}
\end{enumerate}
\end{nota}

\begin{rem}\label{rem:MM_km}
Two special cases arise:
\begin{enumerate}
    \item If $k = m$, then, by expressions (\ref{eqn:cases_of_equ})  in Remark \ref{rem:Vs}, the four cases in \ref{nota:MM_km} reduce to one case: $A,B$ are $V(k;k)$--generic,\label{item:k=m}
    \item If $A=B$, then, by expressions (\ref{eqn:cases_of_equ}) in Remark \ref{rem:Vs} and Item~\ref{item:intersection} of Notation~\ref{nota:Vs}, the four cases in \ref{nota:MM_km} reduce to one case: $A$ is $V(k,m;k,m)$--generic.
\end{enumerate}
\end{rem}

In the following Lemma the necessity is trivial, however, sufficiency is crucial because it tells us how to recover the pair $(A,B)$ from the indicator matrix $C$.
\begin{lem} [Characterization of $\MM_{km}$]\label{lem:MM_km} Let $C=[c_{ij}]\in \norma n$ be the indicator matrix of a pair $(A,B)$ with $A\neq B$. Then $(A,B) \in \MM_{km}$ for some $k\neq m$, if and only if the following conditions hold:
\begin{enumerate}[I.]
\item for each $s,t \in [n] \setminus \{k, m\}$ with $s \neq t$, the  $(s,t)$ entry is a gift zero with $c_{st}=\phi_{st}^{km}$,  \label{item:aa}
\item $c_{km}$ and $c_{mk}$ are propagation zeros, \label{item:bb}
\item there  are no duplicates in the pair $(A,B)$ (or, equivalently, $\Sigma(A,B)= 4n-6$).\label{item:cc}
\end{enumerate}
\end{lem}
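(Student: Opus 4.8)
The plan is to prove both implications, after first translating the four cases of Notation~\ref{nota:MM_km} into explicit entry patterns. I would begin by recording that in \emph{every} one of the four cases $A$ lies in $W(m;k)$ and $B$ lies in $W(k;m)$: that is, row $m$ and column $k$ of $A$ vanish off the corner entry $a_{mk}$, row $k$ and column $m$ of $B$ vanish off $b_{km}$, and all remaining entries are $-1$ unless forced to $0$. A short bookkeeping then shows the cases differ \emph{only} in the off-diagonal corner entries $a_{km},a_{mk},b_{km},b_{mk}$: case (\ref{item:min0}) gives $a_{mk}=b_{km}=0$, case (\ref{item:min1}) gives $a_{km}=b_{mk}=0$, case (\ref{item:min2}) gives $b_{km}=b_{mk}=0$, and case (\ref{item:min3}) gives $a_{km}=a_{mk}=0$, the other two corner entries being $-1$. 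In particular, in all four cases exactly one of $\{a_{km},b_{km}\}$ and exactly one of $\{a_{mk},b_{mk}\}$ is zero, and Lemma~\ref{lem:nu} yields $\Sigma(A,B)=(\nu(A)-n)+(\nu(B)-n)=4n-6$. I assume $n\ge 4$, so interior pairs $(s,t)$ with $s,t\in[n]\setminus\{k,m\}$ exist and gift zeros are meaningful (Remark~\ref{rem:exclusive}).

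For necessity, suppose $(A,B)\in\MM_{km}$ and fix interior $s,t\in[n]\setminus\{k,m\}$ with $s\ne t$. Since $A\in W(m;k)$ and $B\in W(k;m)$ I get $a_{sk}=a_{mt}=0$ and $b_{kt}=b_{sm}=0$, while genericity gives $a_{st}=b_{st}=-1$; hence $c_{st}=0$ by Lemma~\ref{lem_dfn:gift}, with witness $(k,m)$. To see this is a gift and not a cost zero I would note that for $s\notin\{k,m\}$ the only off-diagonal zero of row $s$ of $A$ sits in column $k$, and for $t\notin\{k,m\}$ the only such zero of column $t$ sits in row $m$; a common index $\ell$ with $a_{s\ell}=a_{\ell t}=0$ would force $\ell=k=m$, impossible. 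This is condition~\ref{item:aa}. Condition~\ref{item:bb} follows from the corner table together with Lemma~\ref{lem:propagation}: at each of $(k,m)$ and $(m,k)$ one of the two relevant corner entries of $A$ or $B$ vanishes, so $c_{km},c_{mk}$ are propagation zeros. Finally $\Sigma(A,B)=4n-6$ was computed above, and there are no duplicates: a non-corner duplicate would require a position in (row $m\cup$ column $k$)$\cap$(row $k\cup$ column $m$) off the diagonal, which is empty, while at each corner the table shows exactly one of the two entries equals $0$. This gives condition~\ref{item:cc}.

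For sufficiency---the essential direction, since it recovers $(A,B)$ from $C$---I assume \ref{item:aa}--\ref{item:cc}. Reading the witnesses $c_{st}=\phi_{st}^{km}$ across all interior pairs gives $a_{sk}=a_{mt}=0$, $b_{kt}=b_{sm}=0$ and $a_{st}=b_{st}=-1$ for all $s,t\in[n]\setminus\{k,m\}$; equivalently, column $k$ and row $m$ of $A$ vanish off the corner, row $k$ and column $m$ of $B$ vanish off the corner, all interior entries are $-1$, and this accounts for $2n-4$ off-diagonal zeros in each matrix. I then use~\ref{item:cc} to eliminate every remaining potential zero outside the corner: if $a_{kt}=0$ for some $t\notin\{k,m\}$ then, since $b_{kt}=0$ by~\ref{item:aa}, $(k,t)$ is a duplicate; the symmetric argument kills a zero at $a_{sm}$, $b_{mt}$ or $b_{sk}$ for interior indices. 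Hence the only entries not yet pinned to $-1$ are the four corner entries. By~\ref{item:bb} and the definition of propagation zero (Definition~\ref{dfn:gift}, item~\ref{item:propa}), $c_{km}=0$ forces $a_{km}=0$ or $b_{km}=0$, and $c_{mk}=0$ forces $a_{mk}=0$ or $b_{mk}=0$, while~\ref{item:cc} forbids both $a_{km}=b_{km}=0$ and both $a_{mk}=b_{mk}=0$. Thus exactly one of $\{a_{km},b_{km}\}$ and exactly one of $\{a_{mk},b_{mk}\}$ is zero, yielding precisely four configurations; matching each against the corner table of the first paragraph identifies it with exactly one of the cases (\ref{item:min0})--(\ref{item:min3}), and since $a_{km}\ne b_{km}$ the hypothesis $A\ne B$ is automatic.

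The parenthetical equivalence in~\ref{item:cc} then drops out of the same count: under \ref{item:aa} and~\ref{item:bb} the only zeros beyond the forced $2n-4$ per matrix are corner zeros, and~\ref{item:bb} guarantees at least two of them while ``no duplicates'' permits at most two, so ``no duplicates'' and ``$\Sigma(A,B)=4n-6$'' coincide. I expect the main obstacle to be exactly the sufficiency direction: conditions~\ref{item:aa}--\ref{item:cc} a priori only constrain the indicator matrix $C$, and the work lies in showing they rigidly determine every entry of $A$ and $B$. The delicate point is that the no-duplicate hypothesis is precisely what prevents spurious zeros from appearing at the non-corner positions of rows $k,m$ and columns $k,m$, which would otherwise leave the reconstruction underdetermined.
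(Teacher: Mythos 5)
Your proposal is correct and follows essentially the same route as the paper's (much terser) proof: translate the four cases of Notation~\ref{nota:MM_km} into the pattern ``$A\in W(m;k)$, $B\in W(k;m)$ plus a four-way choice at the corner entries $a_{km},a_{mk},b_{km},b_{mk}$,'' derive conditions I--III from that pattern, and conversely use condition~I to force the $2n-4$ off-diagonal zeros in each matrix, condition~III to exclude any further zeros, and condition~II to initialize the four corner configurations. The only (harmless) divergences are that you rule out the cost-zero alternative by a direct common-witness argument rather than via the two-duplicates observation of Remark~\ref{rem:exclusive}, and that you make explicit the restriction $n\ge 4$ under which condition~I is non-vacuous.
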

\begin{proof}
Necessity follows from Definition~\ref{dfn:gift}, Notation~\ref{nota:MM_km} and Item \ref{item:2_duplicates} in
Remark \ref{rem:exclusive}. To prove sufficiency, notice that
Items~\ref{item:bb} and \ref{item:cc} initialize four cases (both zeros come from $A$ and $a_{km}=0=a_{mk}$, or
both zeros come from $B$ and $b_{km}=0=b_{mk}$, or $a_{km}=0=b_{mk}$  or $a_{mk}=0=b_{km}$). Then, by Item \ref{item:aa},
we get $a_{sk}=b_{kt}=b_{sm}=a_{mt}=0$ for all $s,t\in[n]\setminus\{k,m\}, s\neq t$, and no further  off--diagonal zeros
appear in $A$ or $B$, by Item \ref{item:cc}. Thus,  we get cases \ref{item:min0} to \ref{item:min3} of Notation \ref{nota:MM_km}.
An illustration of how the argument works is found in Example~\ref{ex:2}.
\end{proof}
\begin{cor}\label{cor:MM_km} For each pair  $(A,B) \in \MM_{km}$ with
$A \neq B$ and $k\neq m$, the indicator matrix $C$ satisfies
\begin{enumerate}
\item $C = Z_n$,  \label{item:C_null}
\item $\propa(C)=\Sigma(A,B)=4n-6$  and $\gift(C)=(n-3)(n-2)$.\label{item:MM_4n-6}
\end{enumerate}
\end{cor}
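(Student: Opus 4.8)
The plan is to derive Corollary~\ref{cor:MM_km} directly from the characterization in Lemma~\ref{lem:MM_km}, treating its three conditions as the starting data. For Item~\ref{item:C_null}, the key point is that $C\in\norma n$ already has zero diagonal, so it suffices to show every off--diagonal entry $c_{st}$ vanishes. I would split the off--diagonal positions according to how many of the indices $s,t$ lie in $\{k,m\}$. If both $s,t\in[n]\setminus\{k,m\}$, then condition~\ref{item:aa} gives $c_{st}=\phi_{st}^{km}=0$ outright. The positions $(k,m)$ and $(m,k)$ are handled by condition~\ref{item:bb}. The only remaining positions are those with exactly one index in $\{k,m\}$, i.e.\ entries in row $k$, row $m$, column $k$ or column $m$ (other than $(k,m)$ and $(m,k)$); here I would invoke the propagation zeros produced by the aligned zeros $a_{sk}=b_{kt}=b_{sm}=a_{mt}=0$, which Lemma~\ref{lem:MM_km}'s sufficiency argument already establishes hold for all $s,t\in[n]\setminus\{k,m\}$. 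Concretely, for each such $s$ we have $a_{sk}=0$, so Lemma~\ref{lem:propagation} forces $c_{sk}=0$; likewise $b_{sm}=0$ gives $c_{sm}=0$, and the analogous column statements $b_{kt}=0$, $a_{mt}=0$ give $c_{kt}=c_{mt}=0$. This sweeps all remaining off--diagonal positions, so $C=Z_n$.

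For Item~\ref{item:MM_4n-6}, I would simply count. The equality $\propa(C)=\Sigma(A,B)$ follows from Remark~\ref{rem:bounds}: since condition~\ref{item:cc} asserts no duplicates exist, the inequality $\propa(C)\le\Sigma(A,B)$ becomes an equality, and the value $\Sigma(A,B)=4n-6$ is exactly what condition~\ref{item:cc} records. For the gift count, by condition~\ref{item:aa} every position $(s,t)$ with $s,t\in[n]\setminus\{k,m\}$ and $s\neq t$ is a gift zero. The index set $[n]\setminus\{k,m\}$ has $n-2$ elements, so the number of such ordered pairs with $s\neq t$ is $(n-2)(n-3)$, giving $\gift(C)=(n-3)(n-2)$. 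I would note in passing that these gift zeros are genuinely distinct from the propagation and diagonal zeros by the mutual--exclusivity recorded in Item~\ref{item:exclusive} of Remark~\ref{rem:exclusive}, so the count is not double--counting anything.

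The main obstacle, modest as it is, lies in the off--diagonal propagation step of Item~\ref{item:C_null}: one must be sure that the aligned zeros $a_{sk}=b_{kt}=b_{sm}=a_{mt}=0$ really do hold for \emph{every} relevant $s$ or $t$, not merely for those pairs exhibited in a single gift zero. This is precisely the content of the sufficiency direction of Lemma~\ref{lem:MM_km} (and is also visible structurally from membership in $\MM_{km}$ via Notation~\ref{nota:MM_km}, where $A$ is $W(m;k)$-- or $V(m;k)$--generic and $B$ is $W(k;m)$-- or $V(k;m)$--generic, forcing the entire $k$--column and $m$--column of $A$ and the entire $k$--row and $m$--row of $B$ to vanish appropriately). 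Once that alignment is in hand, every invocation of Lemma~\ref{lem:propagation} is immediate, and the rest is bookkeeping. Since both items follow so directly from the hypotheses, I expect this corollary to be essentially a one--paragraph consequence of Lemma~\ref{lem:MM_km} and Remark~\ref{rem:bounds}, with the counting argument being entirely routine.
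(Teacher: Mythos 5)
Your proposal is correct, and for Item~\ref{item:MM_4n-6} it coincides with the paper's argument (count the $(n-2)(n-3)$ gift zeros from condition~\ref{item:aa}, then get $\propa(C)=\Sigma(A,B)=4n-6$ from the absence of duplicates via Remark~\ref{rem:bounds} and condition~\ref{item:cc}). For Item~\ref{item:C_null}, however, you take a genuinely different route: the paper goes back to Notation~\ref{nota:MM_km} and verifies orthogonality case by case, invoking Lemma~\ref{lem:starting_point} for case~0 and asserting that cases 1--3 are ``checked similarly,'' whereas you partition the off--diagonal positions by how many indices lie in $\{k,m\}$ and dispose of each block using the already--established conditions of Lemma~\ref{lem:MM_km} together with Lemma~\ref{lem:propagation}. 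Your route has the advantage of treating all four generic cases uniformly and of making explicit the step the paper leaves implicit (the entries in rows and columns $k$ and $m$ of $C$), at the cost of leaning on the necessity direction of Lemma~\ref{lem:MM_km}; this is legitimate and non--circular, since that lemma is proved independently of the corollary and the notion of gift/propagation zero does not presuppose orthogonality. Two small points. First, for $n=3$ condition~\ref{item:aa} is vacuous, so the aligned zeros $a_{sk}=b_{kt}=b_{sm}=a_{mt}=0$ cannot be extracted from it; you correctly flag the fallback to Notation~\ref{nota:MM_km} itself (in every one of the four cases $A$ is at least $W(m;k)$--structured and $B$ at least $W(k;m)$--structured, which is exactly what is needed), and that fallback should be the primary citation. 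Second, your parenthetical aside misstates which lines vanish: it is column $k$ and \emph{row} $m$ of $A$, and row $k$ and \emph{column} $m$ of $B$; your entry--level claims $a_{sk}=a_{mt}=b_{kt}=b_{sm}=0$ are the correct ones and are what the proof actually uses, so this is a slip of wording rather than a gap.
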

\begin{proof}
For Item \ref{item:C_null}, we must prove orthogonality of $(A,B)$.
If the pair $(A,B)$ is in case \ref{item:min0} of Notation \ref{nota:MM_km}, then orthogonality holds true, by Lemma \ref{lem:starting_point}.
If the pair $(A,B)$ is in cases \ref{item:min1} to \ref{item:min3}, then orthogonality is checked, similarly.
Now, to prove Item \ref{item:MM_4n-6}, we count gift zeros using Item~\ref{item:aa} and then count propagation zeros using Items~\ref{item:bb} and \ref{item:cc} in Lemma~\ref{lem:MM_km}.
\end{proof}
\begin{ex}\label{ex:2} The following four items correspond to the $4$ items from Notation~\ref{nota:MM_km}
for $(k,m)= (4,3)$. Notice that the differences between the items occur only in entries $(k,m)$ and $(m,k)$. That is why
the indicator matrix $C$ is the same in all cases. Below the symbol "$-$"    denotes the element  $-1\in R$.
$$C = \begin{bmatrix}
{\color{red}0}& \phi_{12}^{43} &{\color{blue}0}& {\color{blue}0}& \phi_{15}^{43} & \phi_{16}^{43}\\
\phi_{21}^{43} &{\color{red}0}&{\color{blue}0}& {\color{blue}0}& \phi_{25}^{43} & \phi_{26}^{43}\\
{\color{blue}0}&{\color{blue}0}&{\color{red}0}&{\color{blue}0}& {\color{blue}0}&{\color{blue}0}\\
{\color{blue}0}& {\color{blue}0}&{\color{blue}0}&{\color{red}0}& {\color{blue}0}& {\color{blue}0}\\
\phi_{51}^{43} & \phi_{52}^{43} &{\color{blue}0}&{\color{blue}0}&{\color{red}0}& \phi_{56}^{43}\\
\phi_{61}^{43} & \phi_{62}^{43} &{\color{blue}0}&{\color{blue}0}& \phi_{65}^{43} & {\color{red}0}
\end{bmatrix}.$$

$$0.\ \ A_0=\begin{bmatrix}
0 & - & - & 0 & - & -\\
- & 0 & - & 0 & - & -\\
0 & 0 & 0 & 0 & 0 & 0\\
- & - & - & 0 & - & -\\
- & - & - & 0 & 0 & -\\
- & - & - & 0 & - & 0
\end{bmatrix}, \
B_0=\begin{bmatrix}
0 & - & 0 & - & - & -\\
- & 0 & 0 & - & - & \\
- & - & 0 & - & - & -\\
0 & 0 & 0 & 0 & 0 & 0\\
- & - & 0 & - & 0 & -\\
- & - & 0 & - & - & 0
\end{bmatrix}$$
$$1.\ \
A_1=\begin{bmatrix}
0 & - & - & 0 & - & -\\
- & 0 & - & 0 & - & -\\
0 & 0 & 0 & - & 0 & 0\\
- & - & 0 & 0 & - & -\\
- & - & - & 0 & 0 & -\\
- & - & - & 0 & - & 0
\end{bmatrix}, \
B_1=\begin{bmatrix}
0 & - & 0 & - & - & -\\
- & 0 & 0 & - & - & -\\
- & - & 0 & 0 & - & -\\
0 & 0 & - & 0 & 0 & 0\\
- & - & 0 & - & 0 & -\\
- & - & 0 & - & - & 0
\end{bmatrix}$$
$$2.\ \ A_2=\begin{bmatrix}
0 & - & - & 0 & - & -\\
- & 0 & - & 0 & - & -\\
0 & 0 & 0 & - & 0 & 0\\
- & - & - & 0 & - & -\\
- & - & - & 0 & 0 & -\\
- & - & - & 0 & - & 0
\end{bmatrix}, \
B_2=\begin{bmatrix}
0 & - & 0 & - & - & -\\
- & 0 & 0 & - & - & -\\
- & - & 0 & 0 & - & -\\
0 & 0 & 0 & 0 & 0 & 0\\
- & - & 0 & - & 0 & -\\
- & - & 0 & - & - & 0
\end{bmatrix}$$
$$3.\ \ A_3=\begin{bmatrix}
0 & - & - & 0 & - & -\\
- & 0 & - & 0 & - & -\\
0 & 0 & 0 & 0 & 0 & 0\\
- & - & 0 & 0 & - & -\\
- & - & - & 0 & 0 & -\\
- & - & - & 0 & - & 0
\end{bmatrix}, \
B_3=\begin{bmatrix}
0 & - & 0 & - & - & -\\
- & 0 & 0 & - & - & -\\
- & - & 0 & - & - & -\\
0 & 0 & - & 0 & 0 & 0\\
- & - & 0 & - & 0 & -\\
- & - & 0 & - & - & 0
\end{bmatrix}.$$
\end{ex}
\begin{cor}[Four sufficient conditions for the existence of orthogonal pairs] \label{cor:suff_conditions}
Let $n\ge 2$ and $A,B\in \norma n$. Then the pair $(A,B)$ is orthogonal if there exist $k,m\in[n]$  such that one of the following holds:
\begin{enumerate} \setcounter{enumi}{-1}
\item $A\in V(k;m)$ and $B\in V(m;k)$. \label{item:d}
\item $A\in W(k;m)\cap Z(k;m)\cap Z(m;k)$ and $B\in W(m;k)$. \label{item:a}
\item $A\in W(k;m)\cap Z(m;k)$ and $B\in W(m;k)\cap Z(k;m)$. \label{item:b}
\item $A\in W(k;m)$ and $B\in W(m;k)\cap Z(k;m)\cap Z(m;k)$. \label{item:c}
\end{enumerate}
\end{cor}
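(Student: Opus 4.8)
The plan is to prove all four conditions at once by showing that each forces the indicator matrix $C$ of $(A,B)$ to be $Z_n$ (recall $A,B$ are orthogonal iff $C=Z_n$). The organizing observation is that the four items share a common backbone: in every case $A\in W(k;m)$ and $B\in W(m;k)$, since each $Z(\cdot;\cdot)$ constraint only shrinks the ambient set, and by Remark~\ref{rem:Vs} one has $V(k;m)=W(k;m)\cap Z(k;m)\subseteq W(k;m)$. The items differ solely in how two extra zeros are distributed between $A$ and $B$. So I would first extract everything that follows from the backbone alone, and only afterwards handle the two positions where the items genuinely diverge.

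First I would run propagation (Lemma~\ref{lem:propagation}) on the backbone. From $A\in W(k;m)$ we get $a_{kj}=0$ for $j\neq m$ and $a_{im}=0$ for $i\neq k$, hence $c_{kj}=0$ for $j\neq m$ and $c_{im}=0$ for $i\neq k$; from $B\in W(m;k)$ we similarly get $c_{mj}=0$ for $j\neq k$ and $c_{ik}=0$ for $i\neq m$. Together these show that rows $k,m$ and columns $k,m$ of $C$ vanish, the only possible exceptions being the two corner entries $c_{km}$ and $c_{mk}$.

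Next I would clear the remaining block, namely the entries $c_{ij}$ with $i,j\in[n]\setminus\{k,m\}$ and $i\neq j$, by repeating the computation of Lemma~\ref{lem:starting_point} verbatim. Evaluating $l_{ij}=\max_{t}(a_{it}+b_{tj})$ at $t=m$ uses $a_{im}=0$ (available since $i\neq k$) and $b_{mj}=0$ (available since $j\neq k$) to force $l_{ij}=0$; symmetrically, evaluating $r_{ij}=\max_{t}(b_{it}+a_{tj})$ at $t=k$ uses $b_{ik}=0$ (since $i\neq m$) and $a_{kj}=0$ (since $j\neq m$) to force $r_{ij}=0$. These are exactly the non-exceptional $W$-entries, so this step requires only $A\in W(k;m)$ and $B\in W(m;k)$ and no $Z$-constraint whatsoever. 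At this point $C$ is zero except possibly at $(k,m)$ and $(m,k)$.

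Finally I would dispatch those two corner entries, and this is the only step where the four items differ and where the sole bookkeeping lies. The entry $c_{km}$ is a propagation zero as soon as $a_{km}=0$ or $b_{km}=0$, and $c_{mk}$ is a propagation zero as soon as $a_{mk}=0$ or $b_{mk}=0$; the four conditions are precisely the four ways of supplying one such zero for each corner. Item~0 uses $a_{km}=0$ (from $V(k;m)$) and $b_{mk}=0$ (from $V(m;k)$), which is literally Lemma~\ref{lem:starting_point}; Item~1 takes both zeros from $A$ (namely $a_{km}=a_{mk}=0$); Item~3 takes both from $B$ (namely $b_{km}=b_{mk}=0$); and Item~2 splits them ($b_{km}=0$ and $a_{mk}=0$). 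In each case $c_{km}=c_{mk}=0$, so $C=Z_n$ and $(A,B)$ is orthogonal. The degenerate case $k=m$ collapses all four items to $A,B\in V(k;k)=W(k;k)$ by Remark~\ref{rem:Vs}, where the two corners merge into the diagonal entry $c_{kk}=0$ and the claim reduces to Lemma~\ref{lem:starting_point} with $p=q=k$. I expect no serious obstacle: the whole argument is a reuse of Lemmas~\ref{lem:propagation} and \ref{lem:starting_point}, and the only genuinely case-dependent point is the four-way distribution of the two corner zeros just described.
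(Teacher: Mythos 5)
Your proof is correct. The computational core --- propagating the $W$--zeros via Lemma~\ref{lem:propagation} and then evaluating $l_{ij}$ at $t=m$ and $r_{ij}$ at $t=k$ --- is exactly the computation of Lemma~\ref{lem:starting_point}, but you package it differently from the paper. The paper proves the corollary by reduction to the generic case: it invokes Item 1 of Corollary~\ref{cor:MM_km} (orthogonality of pairs in $\MM_{km}$, which is itself proved by treating case 0 of Notation~\ref{nota:MM_km} via Lemma~\ref{lem:starting_point} and declaring cases 1--3 to be ``checked similarly''), and then passes from generic to arbitrary members of the four sets by observing that adding further zeros to $A$ or $B$ can only help orthogonality. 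You instead give one direct, uniform verification for all four items: the common backbone $A\in W(k;m)$, $B\in W(m;k)$ already forces every entry of the indicator matrix to vanish except possibly $c_{km}$ and $c_{mk}$, and each item's $Z$--constraints are precisely a choice of propagation zero for each of those two corners. Your route buys explicitness and uniformity --- it actually writes out the ``similarly'' and the monotonicity step that the paper leaves implicit --- while the paper's route buys brevity and reuses the $\MM_{km}$ machinery it needs anyway for the minimality theorems. Your treatment of the degenerate case $k=m$ (collapse of all four items to $A,B\in V(k;k)$ via Remark~\ref{rem:Vs}, then Lemma~\ref{lem:starting_point} with $p=q=k$) agrees with the paper's.
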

\begin{proof}
If $k=m$, then it follows from Item \ref{item:k=m} of Remark~\ref{rem:MM_km} and Lemma \ref{lem:starting_point}.
If $k \neq m$, then it
follows from Item~\ref{item:C_null} of Corollary~\ref{cor:MM_km}, by allowing $A$ or $B$ to have more zeros than strictly required by the structure of sets $V(k;m)$, $W(k;m)$, $Z(k;m)$. Note that Item \ref{item:d} here  is  just Lemma \ref{lem:starting_point},  the  sufficient condition we started with. 
\end{proof}

\begin{lem}\label{lem:two_zeros}
Let $n \ge 3$. If the pair $(A,B)$ is orthogonal, then $\Sigma(i) \ge 2$, for all $i \in [n]$.
\end{lem}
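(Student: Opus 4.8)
The plan is to fix an arbitrary $i \in [n]$ and argue by cases on how many off--diagonal zeros the $i$--th rows of $A$ and $B$ can carry. The trivial starting observation is that $\nur A i \ge 1$ and $\nur B i \ge 1$, because of the vanishing diagonal entries $a_{ii}=b_{ii}=0$; hence $\Sigma(i)=\nur A i + \nur B i - 2 \ge 0$ in all cases, and the only configurations that could possibly violate $\Sigma(i)\ge 2$ are those in which at least one of $\nur A i,\nur B i$ equals $1$. So the whole question reduces to understanding a row that carries \emph{only} its diagonal zero.

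First I would dispose of the easy case: if both $\nur A i \ge 2$ and $\nur B i \ge 2$, then $\Sigma(i)=\nur A i + \nur B i - 2 \ge 2$ immediately and there is nothing left to prove. This isolates the genuine case where, say, $\nur B i = 1$ (the case $\nur A i = 1$ being entirely symmetric, with the roles of $AB$ and $BA$ interchanged).

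The heart of the argument is this remaining case. If $\nur B i = 1$, then row $i$ of $B$ contains only its diagonal zero, so $b_{ij}<0$ for every $j\neq i$. I would then exploit the orthogonality relation in the form $BA=Z_n$: for every $j\in[n]$ the entry $0=r_{ij}=\max_{t\in[n]}(b_{it}+a_{tj})$ must be attained. Since every off--diagonal $b_{it}$ is strictly negative while every $a_{tj}\le 0$, the summand $b_{it}+a_{tj}$ is strictly negative whenever $t\neq i$, so the value $0$ can be realized only at $t=i$, forcing $b_{ii}+a_{ij}=a_{ij}=0$. As $j$ was arbitrary, the whole $i$--th row of $A$ must vanish, i.e.\ $\nur A i = n$. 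Consequently $\Sigma(i)=n+1-2=n-1$, which is $\ge 2$ \emph{precisely because} $n\ge 3$, completing the proof.

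The key insight I expect to be the crux is exactly this forcing phenomenon: a row carrying only its diagonal zero in one factor compels the matching row of the other factor to be identically zero, read off from the unique maximizing index of a tropical row--by--column product. This is also where the hypothesis $n\ge 3$ is consumed, since the conclusion degrades to $\Sigma(i)=n-1=1$ when $n=2$; indeed Lemma~\ref{lem:n_2} shows the statement genuinely fails for $n=2$, which is a useful sanity check that the bound is sharp in $n$. Beyond identifying the maximizer, no computation is required, so I anticipate the only subtlety is phrasing the case split cleanly and invoking the $A\leftrightarrow B$ symmetry correctly.
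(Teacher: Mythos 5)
Your proof is correct and matches the paper's argument: the paper likewise observes that a row of one factor carrying only its diagonal zero forces the corresponding row of the other factor to be entirely zero (via the unique maximizer in the tropical product), giving $\Sigma(i)=n-1\ge 2$, and handles the case $\nur{A}{i},\nur{B}{i}>1$ trivially. Your write-up is just a more detailed rendering of the same case split and forcing step.
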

\begin{proof}
Suppose that $\nur A i = 1$, i.e., we have only one diagonal zero in $\ro A i$, then $\ro B i$ is zero, by definition of
tropical multiplication, hence $\Sigma(i) = n - 1 \ge 2$. Similarly if $\nur B i = 1$. If $\nur A i > 1$ and
$\nur B i > 1$, then also $\Sigma(i) \ge 2$ and the proof is complete.
\end{proof}

\begin{lem}\label{lem:3_rows}
If the pair $(A,B)$ is minimal and $n \ge 3$, then there exist at least three mutually different  indices  $i\in [n]$
such that $2 \le \Sigma(i) \le 3$.
\end{lem}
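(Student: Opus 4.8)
The plan is to argue by contradiction with a short counting estimate that plays the additive decomposition of $\Sigma$ against the upper bound for $\Theta_n$. First I would observe that minimality of $(A,B)$ entails orthogonality, so Lemma~\ref{lem:two_zeros} applies and yields $\Sigma(i)\ge 2$ for every $i\in[n]$. Consequently the two-sided condition $2\le\Sigma(i)\le 3$ collapses to the single inequality $\Sigma(i)\le 3$, and it suffices to produce at least three indices $i$ with $\Sigma(i)\le 3$.

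Next I would record the additive identity
\[
\Sigma(A,B)=\sum_{i\in[n]}\Sigma(i),
\]
which is immediate from $\sum_i\nur Ai=\nu(A)$, the analogous identity for $B$, and $\sum_i 2=2n$. The whole proof then hinges on comparing this total with the bound $\Theta_n\le 4n-6$ of Corollary~\ref{cor:Theta_estimation}.

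Now suppose, toward a contradiction, that at most two indices satisfy $\Sigma(i)\le 3$. Then at least $n-2$ indices satisfy $\Sigma(i)\ge 4$, while each of the remaining (at most two) indices still satisfies $\Sigma(i)\ge 2$ by the previous paragraph. Summing these lower bounds gives
\[
\Sigma(A,B)=\sum_{i\in[n]}\Sigma(i)\ge 4(n-2)+2\cdot 2=4n-4.
\]
On the other hand, minimality together with Corollary~\ref{cor:Theta_estimation} forces $\Sigma(A,B)=\Theta_n\le 4n-6$, so $4n-4\le 4n-6$, which is absurd. Hence at least three distinct indices satisfy $\Sigma(i)\le 3$, and by the reduction above these are exactly the indices with $2\le\Sigma(i)\le 3$.

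I do not anticipate a genuine obstacle: this is essentially a one-line averaging argument. The only two points needing care are that Lemma~\ref{lem:two_zeros} is invoked legitimately (it requires orthogonality, which minimality supplies, and it requires $n\ge 3$, which is in the hypothesis), and that in the counting one must retain the two exceptional indices at their floor value $\Sigma(i)\ge 2$ rather than discarding them, since it is precisely this extra ``$+4$'' that makes the gap $4n-4>4n-6$ appear and close the argument.
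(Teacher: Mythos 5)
Your proof is correct and follows essentially the same route as the paper's: invoke Lemma~\ref{lem:two_zeros} for the lower bound $\Sigma(i)\ge 2$, then assume $n-2$ rows have $\Sigma(i)\ge 4$ and derive $\Sigma\ge 4(n-2)+2\cdot 2=4n-4>4n-6$, contradicting Corollary~\ref{cor:Theta_estimation}. Your explicit statement of the additive identity $\Sigma(A,B)=\sum_i\Sigma(i)$ is a welcome clarification that the paper leaves implicit.
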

\begin{proof}
The first inequality $2 \le \Sigma(i)$ holds for all $i\in[n]$ by Lemma~\ref{lem:two_zeros}. Suppose that for $n-2$
rows $i$ we have $\Sigma(i) \ge 4$. Then $\Sigma \ge 4(n-2) + 2\cdot 2=4n-4 > 4n-6$, which contradicts
with Corollary~\ref{cor:Theta_estimation} and the proof is complete.
\end{proof}

\subsection{Arbitrary pairs}\label{subsec:different}

The aim of this subsection is to prove that $\Theta_n=4n-6$, for $n\ge2$, $n \neq 4$, as well as to \emph{construct minimal pairs}.

In the following we assume  that 
$C\in \norma n$ is the indicator matrix of the  pair $(A,B)$.

\begin{lem}\label{lem:no_row_cost} Let $n \ge 6$. If the pair  $(A,B)$ is minimal,
then no  row in $C$ is a cost row.
\end{lem}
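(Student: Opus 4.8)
The plan is to argue by contradiction: suppose $(A,B)$ is minimal but some row $\ro C s$ is a cost row. The main tool is the accounting identity $\Sigma(A,B)=\sum_{i\in[n]}\Sigma(i)$ together with the upper bound $\Theta_n\le 4n-6$ from Corollary~\ref{cor:Theta_estimation}, and the lower bound $\Sigma(i)\ge 2$ for every row from Lemma~\ref{lem:two_zeros}. Minimality forces $\Sigma(A,B)=4n-6$, so the total ``budget'' of off--diagonal zeros is tightly constrained, and I want to show that harbouring a cost row in $C$ overspends this budget.

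\emph{Exploiting the structure of a cost row.} By Item~\ref{item:cost_row} of Remark~\ref{rem:exclusive}, if $\ro C s$ is a cost row then it is $k$--cost for a single $k$, meaning $c_{st}=\phi_{st}^{kk}$ for all $t\in[n]\setminus\{s,k\}$; by Item~\ref{item:2_duplicates} each such cost zero $\phi_{st}^{kk}$ requires the two duplicates $(s,k)$ and $(k,t)$ in the pair $(A,B)$. The crucial observation is that a single cost row $\ro C s$ with $n-2$ cost zeros forces the duplicate $(s,k)$ once but also forces a duplicate $(k,t)$ for every $t\in[n]\setminus\{s,k\}$, i.e.\ essentially a whole duplicated row/column worth of coincidences $a_{kt}=b_{kt}=0$. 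These duplicates are expensive: by Remark~\ref{rem:bounds} the gap between $\propa(C)$ and $\Sigma(A,B)$ is exactly the number of duplicates, and each duplicate $(s,t)$ contributes to both $\nu(A)$ and $\nu(B)$, inflating the sum $\Sigma(i)$ in the affected rows. The plan is to count how many duplicates a cost row forces and to show that the corresponding rows $i$ (namely row $s$, row $k$, and the rows indexed by the various $t$) must then satisfy $\Sigma(i)\ge 3$ or worse, enough extra rows to push the global total strictly above $4n-6$.

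\emph{The counting step — the main obstacle.} Concretely, I would invoke Lemma~\ref{lem:3_rows}: in a minimal pair at most two rows can have $\Sigma(i)\ge 4$, and at least three rows satisfy $2\le\Sigma(i)\le 3$. A $k$--cost row $\ro C s$ forces, for each $t$, the duplicate $(k,t)$, so row $k$ of $(A,B)$ contains roughly $n-2$ duplicated off--diagonal zeros; hence $\Sigma(k)$ is of order $2(n-2)$, which for $n\ge 6$ already exceeds any admissible per--row value while consuming the budget that was meant to be spread over all $n$ rows. The delicate part is to make this bound precise without double--counting: the duplicates $(k,t)$ all lie in the single row $k$ (and column $k$), so their cost is concentrated, and one must verify that subtracting this concentrated cost from the total $4n-6$ leaves too little for the remaining $n-1$ rows, each of which still needs $\Sigma(i)\ge 2$. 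I expect the arithmetic to come down to comparing $2(n-2)+2(n-1)$ against $4n-6$ and finding the former strictly larger, the strictness being exactly why $n\ge 6$ is needed. This bookkeeping — correctly attributing each forced duplicate to a row and showing no cancellation rescues the budget — is the step most likely to require care, and is where a hasty argument could miscount.
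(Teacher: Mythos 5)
Your setup matches the paper's: assume some $\row(C,s)$ is a cost row, invoke Item 8 of Remark~\ref{rem:exclusive} to get a single $k$ with $c_{st}=\phi_{st}^{kk}$ for all $t$, deduce $n-2$ duplicates $(k,t)$ and hence $\Sigma(k)\ge 2(n-2)$, and combine with $\Sigma(i)\ge 2$ for the other $n-1$ rows. But the arithmetic you are counting on does not close the argument: $2(n-2)+2(n-1)=4n-6$ \emph{exactly equals} the upper bound from Corollary~\ref{cor:Theta_estimation}, for every $n$. There is no strict inequality, so no contradiction yet; a cost row is not ruled out by the zero budget alone. (Your remark that the strictness ``is exactly why $n\ge 6$ is needed'' cannot be right, since the identity $2(n-2)+2(n-1)=4n-6$ is independent of $n$. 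Also, Lemma~\ref{lem:3_rows} does not say at most two rows have $\Sigma(i)\ge 4$; it says at least three rows have $\Sigma(i)\le 3$.)

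What the paper does at this point — and what is missing from your proposal — is to turn the equality into structural information: minimality forces $\Sigma=4n-6$, hence $\Sigma(k)=2(n-2)$ and $\Sigma(i)=2$ for every $i\ne k$. It then examines an arbitrary row $s'\notin\{s,k\}$ and splits into four cases according to how the two off--diagonal zeros of $\Sigma(s')=2$ are distributed between $A$ and $B$: a duplicate forces $\row(C,{s'})$ to be another $k$--cost row, which adds the duplicate $(k,s)$ and pushes $\Sigma(k)$ to $2(n-1)$, too large; two zeros in distinct columns split between $A$ and $B$ force a gift row, whose definition pumps $n-3>2$ zeros into two further rows (this is where $n\ge 6$ is genuinely used); and a row of $A$ or $B$ with no off--diagonal zeros forces the corresponding row of the other matrix to be entirely zero, giving $\Sigma(s')=n-1$. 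Each case contradicts the rigid structure just derived. Without this second, structural stage your proof does not go through.
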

\begin{proof}
By hypothesis, the pair $(A,B)$ is orthogonal.
Suppose there exists $s\in[n]$ such that $\ro C s$ contains $n-2$ cost zeros. Then, by Item~\ref{item:cost_row} of
Remark \ref{rem:exclusive}, there exists $k\in [n] \setminus \{s\}$ such that $c_{st}=\phi_{st}^{kk}$, for all
$t \in [n] \setminus \{s, k\}$. Then, by Item~\ref{item:2_duplicates} of Remark~\ref{rem:exclusive}, we have $n-2$ duplicates $(k,t)$, $t \in [n] \setminus \{s, k\}$, hence $\Sigma(k)\ge 2(n-2)$. Using Lemma~\ref{lem:two_zeros} we get $\Sigma \ge 2(n-1) + 2(n-2) = 4n-6$.
Hence $\Sigma=4n-6$, by Corollary~\ref{cor:Theta_estimation}. So $\Sigma(k)= 2(n-2)$ and $\Sigma(i) = 2$, for all $i \neq k$,
i.e., $(A, B)$ has the following structure based on the number of zeros: the number of off--diagonal zeros in $\ro A k$ and
$\ro B k$ is exactly $2(n-2)$ and the number of off--diagonal zeros in $\ro A i$ and $\ro B i$ is exactly $2$, for  all $i \neq k$.
We have four cases.
\begin{enumerate}
\item  (A duplicate exists.) If there exists $s'\in[n]\setminus\{s,k\}$ such that $a_{s'k'}=b_{s'k'}=0$ for some
$k'$ with $k'\neq s'$, then there is only one propagation zero
in $\ro C {s'}$.
Then by Item~\ref{item:exclusive_gift} of Remark~\ref{rem:exclusive} there is no gift zero in $\ro C {s'}$. Since $\ro C {s'}$ is
zero, then by Items~\ref{item:exclusive} and \ref{item:cost_row} of Remark~\ref{rem:exclusive},
 $\ro C {s'}$ is $k'$--cost and $c_{s't}=\phi_{s't}^{k'k'}$, for all $t \in [n]
\setminus \{s', k'\}$.
Hence $\Sigma(k') \ge 2(n-2)$. Since $2(n-2) > 2$ we get that $k' = k$. Since for $t=s$
we have $c_{s's}=\phi_{s's}^{kk}$, then, by Item~\ref{item:2_duplicates} of Remark~\ref{rem:exclusive}, $(k,s)$ is also a duplicate alongside with other $n-2$ duplicates  $(k,t)$, $t \in [n] \setminus \{s, k\}$, hence
$\Sigma(k)\ge 2(n-1) > 2(n-2)$, which is a contradiction with the structure of $(A,B)$.

\item If there exists $s'\in[n]\setminus\{s,k\}$ such that $a_{s'k'}=b_{s'm'}=0$ for some $k',m'$, where $s',k',m'$
are pairwise different, then there are only two propagation zeros
in $\ro C {s'}$. Moreover, by Item~\ref{item:2_duplicates} of Remark~\ref{rem:exclusive}, there is no cost zero in $\ro C {s'}$, since $b_{s'k'}\neq 0\neq a_{s'm'}$ (no duplicates). Since
$\ro C {s'}$ is zero, then
by Items~\ref{item:exclusive} and \ref{item:gift_row} of Remark~\ref{rem:exclusive} the $\ro C {s'}$ is $k'm'$--gift and so
$c_{s't}=\phi_{s't}^{k'm'}$, for all
$t \in [n] \setminus \{s', k', m'\}$.
Then, by
Item~\ref{item:gift_def} of Definition~\ref{dfn:gift}, $\nur A {m'} - 1 \ge n-3$ and $\nur B {k'} - 1 \ge n-3$.
Since $n-3 > 2$, we have $2$ rows with more than $2$ off--diagonal zeros, which contradicts with the structure of $(A,B)$.

\item If there exists $s'\in[n]\setminus\{s,k\}$ such that $\ro A {s'}$ has no off--diagonal zeros, then $\ro B {s'}$
is zero and $\Sigma(s') = n - 1$, which is
a contradiction with
the structure of $(A,B)$.

\item Similarly, if there exists $s'\in[n]\setminus\{s,k\}$ such that $\ro B {s'}$ has no off--diagonal zeros, we also get
a contradiction with the structure of $(A,B)$.
\end{enumerate}
Thus,  there is no row in $C$ with $n-2$ cost zeros, and the proof is complete.
\end{proof}

\begin{lem}\label{lem:2_km_zeros} Let $n \ge 4$, the pair $(A,B)$ be  orthogonal,
and $\ro C s$ be $km$--gift. Then the following hold:
\begin{enumerate} 		
\item $\Sigma(k) \ge n-2$ and $\Sigma(m) \ge n-2$.\label{item:2_km_1}
\item If there exists $s' \in [n] \setminus \{s,k\}$ such that $c_{s's}=\phi_{s's}^{km'}$ for some $m' \in [n] \setminus \{s, s'\}$,
then $\Sigma(k) \ge n-1$.\label{item:2_km_k} 		
\item If there exists   $s' \in [n] \setminus \{s,k\}$
such that $c_{s's}=\phi_{s's}^{k'm}$ for some   $k' \in [n] \setminus \{s, s'\}$, then $\Sigma(m) \ge n-1$.\label{item:2_km_m} 	
\end{enumerate}
\end{lem}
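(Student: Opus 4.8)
The plan is to extract the zeros that the $km$--gift hypothesis forces in two specific rows, and then use orthogonality (together with the extra hypotheses in Items~\ref{item:2_km_k} and \ref{item:2_km_m}) to squeeze out the remaining zeros. Since $\ro{C}{s}$ is $km$--gift, reading each equality $c_{st}=\phi_{st}^{km}$ (for $t\in[n]\setminus\{s,k,m\}$) through Item~\ref{item:gift_def} of Definition~\ref{dfn:gift} gives $b_{kt}=0$ and $a_{mt}=0$ for all these $t$. Hence $\ro{B}{k}$ already carries $n-3$ off--diagonal zeros, at the columns of $[n]\setminus\{s,k,m\}$, and symmetrically $\ro{A}{m}$ carries $n-3$ off--diagonal zeros.

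For Item~\ref{item:2_km_1} I would gain one more zero in row $k$ from orthogonality. As $(A,B)$ is orthogonal, $C=Z_n$, so $l_{km}=0$ and the maximum defining $l_{km}=\max_{t\in[n]}(a_{kt}+b_{tm})$ is attained at some $t_0$ with $a_{kt_0}=b_{t_0m}=0$. If $t_0=k$ this yields $b_{km}=0$, a new off--diagonal zero of $\ro{B}{k}$ sitting in column $m$; if $t_0\neq k$ it yields the off--diagonal zero $a_{kt_0}=0$ of $\ro{A}{k}$. Either way row $k$ gains one off--diagonal zero beyond the $n-3$ already found, so $\Sigma(k)\ge n-2$. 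The bound $\Sigma(m)\ge n-2$ is the mirror statement, obtained from $r_{mk}=\max_{t}(b_{mt}+a_{tk})=0$: the maximiser $t_0$ produces either $a_{mk}=0$ (if $t_0=m$) in $\ro{A}{m}$ or an off--diagonal zero $b_{mt_0}=0$ in $\ro{B}{m}$.

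For Items~\ref{item:2_km_k} and \ref{item:2_km_m} the extra hypothesis supplies the zero needed to reach $n-1$. Reading $c_{s's}=\phi_{s's}^{km'}$ through Definition~\ref{dfn:gift} gives $b_{ks}=0$, an off--diagonal zero of $\ro{B}{k}$ in column $s$; since $s\notin[n]\setminus\{s,k,m\}$ it is new, so $\ro{B}{k}$ now has $n-2$ off--diagonal zeros. Rerunning the orthogonality argument at $l_{km}=0$ then adds one further zero, lying either in column $m$ of $\ro{B}{k}$ or inside $\ro{A}{k}$ --- in both cases distinct from the column--$s$ zero $b_{ks}$ --- whence $\Sigma(k)\ge n-1$. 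Item~\ref{item:2_km_m} is symmetric: $c_{s's}=\phi_{s's}^{k'm}$ forces $a_{ms}=0$, lifting $\ro{A}{m}$ to $n-2$ off--diagonal zeros, and the argument at $r_{mk}=0$ then supplies the last one.

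The step I expect to demand the most care is the \emph{bookkeeping of distinctness}: each time orthogonality produces a zero I must verify it is not already among those counted. This is exactly why the case analysis on the location of the maximiser $t_0$ matters, and it works because the three sources of zeros occupy disjoint locations --- the gift structure of $\ro{C}{s}$ pins down $\ro{B}{k}$ only at columns $\neq s,k,m$ (and $\ro{A}{m}$), the hypotheses of Items~\ref{item:2_km_k} and \ref{item:2_km_m} contribute a zero at column $s$, and the orthogonality zeros land at column $m$ of $\ro{B}{k}$ or inside $\ro{A}{k}$ (respectively column $k$ of $\ro{A}{m}$ or inside $\ro{B}{m}$). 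Once this disjointness is confirmed, the counts add up cleanly to the claimed bounds.
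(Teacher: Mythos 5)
Your proof is correct and follows essentially the same route as the paper: extract the $n-3$ zeros $b_{kt}=a_{mt}=0$ from the gift structure of $\ro C s$, gain one further zero in row $k$ (resp.\ $m$) from the vanishing of the product at position $(k,m)$ (resp.\ $(m,k)$), and for Items 2 and 3 add the zero $b_{ks}=0$ (resp.\ $a_{ms}=0$) supplied by the extra hypothesis. The only cosmetic difference is that you inspect the maximiser of $l_{km}=\max_t(a_{kt}+b_{tm})$ directly, whereas the paper classifies $c_{km}$ as a propagation, cost or gift zero and extracts the new entry case by case; the resulting zeros and the distinctness bookkeeping are the same.
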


\begin{proof}
\begin{enumerate}
\item  Since $\ro C s$ is
$km$--gift, then $s,k,m$ are mutually different and, by Item \ref{item:gift_row} of Remark \ref{rem:exclusive},
it has $n-3$ gift zeros
$c_{st}=\phi_{st}^{km}$, for all $t \in[n]\setminus \{s,k,m\}$.
Thus, by Item~\ref{item:gift_def} of Definition~\ref{dfn:gift}, $a_{mt}=b_{kt}=0$, $t \in[n]\setminus \{s,k,m\}$, hence
$\nur A m - 1 \ge n-3$ and $\nur B k - 1 \ge n-3$.  Consider
$c_{km}$.  Since $(A,B)$ is an orthogonal pair, then $C = Z$, hence $c_{km} = 0$. By Item~\ref{item:exclusive}
of Remark~\ref{rem:exclusive}, we have one of the following cases: 		
\begin{enumerate}[i.] 			
\item If $c_{km}$ is a
propagation zero, then at least one of $a_{km}, b_{km}$ is zero, hence $\Sigma(k) \ge n-3+1=n-2$. 			
\item If $c_{km}$ is a
cost zero $\phi_{km}^{k'k'}$ for some $k'$ then, by Item~\ref{item:cost_def} of Definition~\ref{dfn:gift}, $a_{kk'}=0$
(a zero which has not been counted previously) and
$\Sigma(k)\ge n-3+1=n-2$. 			
\item If $c_{km}$ is a gift zero $\phi_{km}^{k'm'}$ for some $k',m'$ then, by
Item~\ref{item:gift_def} of Definition~\ref{dfn:gift}, $a_{kk'}=0$ (a zero which has not been counted previously)
and $\Sigma(k) \ge n-3+1=n-2$. 		
\end{enumerate} 	
Thus,  in each case we get $\Sigma(k) \ge n-2$. Reasoning similarly for $c_{mk} = 0$, we also get $\Sigma(m)\ge n-2$.
\item By Item~\ref{item:2_km_1}, we have $\Sigma(k) \ge n-2$. Since $c_{s's}=\phi_{s's}^{km'}$
(this can be a cost or a gift zero), then, by Items~\ref{item:cost_def}, \ref{item:gift_def} of Definition~\ref{dfn:gift}
we have $b_{ks}=0$. Note that $b_{ks}$ does not coincide with the other $n-2$ zero entries in $\ro A k$ and $\ro B k$ mentioned
above. Hence $\Sigma(k) \ge n-2 + 1 = n-1$. 		 \item By Item~\ref{item:2_km_1}, we have $\Sigma(m) \ge n-2$.
Since $c_{s's}=\phi_{s's}^{k'm}$, then by Items~\ref{item:cost_def}, \ref{item:gift_def} of Definition~\ref{dfn:gift} we have $a_{ms}=0$.
Note that $a_{ms}$ does not coincide with other $n-2$ zero entries in $\ro A m$ and $\ro B m$ mentioned above. Hence $\Sigma(m)
\ge n-2 + 1 = n-1$. 	
\end{enumerate}
\end{proof}

\begin{lem}\label{lem:1_2}
Let $n \ge 5$, $(A,B)$ be an orthogonal pair, and
$\Sigma(s) = 3$, for some $s \in[n]$. Then either $\nur A s - 1 = 1$ and $\nur B s - 1 = 2$ or
$\nur A s - 1 = 2$ and $\nur B s - 1 = 1$.
\end{lem}
\begin{proof}
Indeed, if $\nur A s - 1 = 0$, then $\ro B s$ must be zero, by
tropical multiplication, and then $\Sigma(s) = n-1$, which contradicts with $\Sigma(s) = 3$.
Similarly, if $\nur B s - 1 = 0$.
\end{proof}

\begin{lem}\label{lem:3_zeros} Let $n \ge 5$ and $(A,B)$ be an orthogonal pair. 
If $\Sigma(s) = 3$, for some $s \in[n]$, then either
\begin{enumerate}
\item $a_{sk}=b_{sl}=b_{sm}=0$ for  some $k,l,m \in[n]\setminus\{s\}$, with $l\neq m$, and for each
$t \in[n]\setminus \{s,k,l,m\}$ we have $c_{st}=\phi_{st}^{km}$ or $c_{st}=\phi_{st}^{kl}$, or \label{item:3_zeros_a}
\item $b_{sk}=a_{sl}=a_{sm}=0$ for  some $k,l,m \in[n]\setminus\{s\}$, with $l\neq m$, and
for each $t \in[n]\setminus \{s,k,l,m\}$ we have  $c_{st}=\phi_{st}^{mk}$ or $c_{st}=\phi_{st}^{lk}$. \label{item:3_zeros_b}
\end{enumerate}
In any case, there exists $k\in[n] \setminus\{s\}$ with $\Sigma(k)\ge n-3$.
\end{lem}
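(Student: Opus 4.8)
The plan is to combine the dichotomy of Lemma~\ref{lem:1_2} with the invariance of the indicator matrix under the swap $A\leftrightarrow B$. First I would observe that interchanging $A$ and $B$ leaves $C$ unchanged: it swaps $L=AB$ and $R=BA$, but the defining condition $l_{ij}=r_{ij}=0$ is symmetric, and at the level of Definition~\ref{dfn:gift} it sends a gift zero $\phi_{st}^{km}$ to $\phi_{st}^{mk}$ and fixes cost zeros. Hence it suffices to treat one of the two alternatives of Lemma~\ref{lem:1_2}, say $\nur A s - 1 = 1$ and $\nur B s - 1 = 2$, which will produce conclusion~\ref{item:3_zeros_a}; conclusion~\ref{item:3_zeros_b} then follows by applying the same argument to the pair $(B,A)$. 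So I fix the unique $k$ with $a_{sk}=0$ together with the two distinct indices $l,m$ with $b_{sl}=b_{sm}=0$.

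The heart of the argument is forcing the $\phi$-type. Since $(A,B)$ is orthogonal, $C=Z_n$, so $c_{st}=0$ for every $t$. For $t\in[n]\setminus\{s,k,l,m\}$ one has $a_{st}\neq0\neq b_{st}$, because the only off--diagonal zeros of the two $s$--rows sit in columns $k$, $l$, $m$; thus $c_{st}$ is not a propagation zero, and by Item~\ref{item:exclusive} of Remark~\ref{rem:exclusive} it must be a cost or a gift zero. Reading off Definition~\ref{dfn:gift}, any such $c_{st}=\phi_{st}^{k'm'}$ requires $a_{sk'}=0$ and $b_{sm'}=0$; the first equality forces $k'=k$ and the second forces $m'\in\{l,m\}$. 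This yields exactly $c_{st}=\phi_{st}^{km}$ or $c_{st}=\phi_{st}^{kl}$, which is conclusion~\ref{item:3_zeros_a} (and includes the degenerate possibility $k\in\{l,m\}$, where one of these is a cost zero).

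For the final count I would use that both $\phi_{st}^{km}$ and $\phi_{st}^{kl}$ require $b_{kt}=0$, so row $k$ of $B$ acquires a zero for every $t\in[n]\setminus\{s,k,l,m\}$. If $k\in\{l,m\}$ this index set has $n-3$ elements and $\Sigma(k)\ge n-3$ is immediate. If $k,l,m$ are pairwise distinct the set has only $n-4$ elements, and I would extract the missing zero from $c_{ks}=0$: a short case split shows that whether $c_{ks}$ is a propagation, a cost, or a gift zero, it forces an off--diagonal zero in row $k$ of $A$ or $B$ lying either in column $s$ or (in the cost and gift cases, where Definition~\ref{dfn:gift} yields $a_{kk'}=0$) in the matrix $A$; in every alternative this zero is distinct from the $n-4$ zeros already counted in $B$. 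Hence $\Sigma(k)\ge n-3$. I expect this last extraction to be the only delicate point: the argument only works because the new zero is genuinely uncounted, so the bookkeeping — row of $A$ versus row of $B$, column $s$ versus the columns outside $\{s,k,l,m\}$ — must be kept scrupulously honest.
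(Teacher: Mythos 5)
Your proof is correct, and for the main body of the argument it coincides with the paper's: both invoke Lemma~\ref{lem:1_2}, locate the three off--diagonal zeros of the two $s$--rows, rule out propagation zeros at positions $t\notin\{s,k,l,m\}$, and then read off from Definition~\ref{dfn:gift} that $a_{sk'}=0$ forces $k'=k$ while $b_{sm'}=0$ forces $m'\in\{l,m\}$; your reduction of conclusion~\ref{item:3_zeros_b} to conclusion~\ref{item:3_zeros_a} via the swap $A\leftrightarrow B$ (which indeed fixes $C$ and sends $\phi_{st}^{km}$ to $\phi_{st}^{mk}$) is exactly the paper's ``the rest is similar.'' The one place you diverge is the final inequality $\Sigma(k)\ge n-3$. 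The paper gets the missing zero by a dichotomy on $\nu(\row(A,k))-1$: if it is $0$, then tropical multiplication forces the whole of $\row(B,k)$ to vanish and $\Sigma(k)=n-1$; if it is at least $1$, then $\Sigma(k)\ge 1+(n-4)=n-3$ directly. You instead extract the extra zero from $c_{ks}=0$ by a case split on whether it is a propagation, cost, or gift zero, checking in each case that the resulting zero ($a_{ks}$, $b_{ks}$, or $a_{kk'}$) is disjoint from the $n-4$ zeros $b_{kt}$ already counted. Your bookkeeping is honest and the extraction goes through, but it is longer than the paper's two-line dichotomy; on the other hand it stays entirely inside the zero-classification machinery of Remark~\ref{rem:exclusive} and Definition~\ref{dfn:gift}, which some readers may find more systematic. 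Either route is acceptable.
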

\begin{proof}
By Lemma \ref{lem:1_2} we have two cases:
\begin{enumerate}
    \item
    If $\nur A s - 1 = 1$ and
    $\nur B s - 1 = 2$, then $a_{sk}=b_{sl}=b_{sm}=0$, for some   $k,l,m \in[n]\setminus\{s\}$, with
    $l\neq m$. Then we have at most $3$ propagation zeros in $\ro C s$ and, by Item \ref{item:exclusive} of Remark \ref{rem:exclusive} and Items~\ref{item:cost_def},~\ref{item:gift_def} of Definition~\ref{dfn:gift}, for each  $t \in[n]\setminus \{s,k,l,m\}$ we have  $c_{st}=\phi_{st}^{km}$ or $c_{st}=\phi_{st}^{kl}$. Thus, $b_{kt} = 0$ for all $t \in[n]\setminus \{s,k,l,m\}$ and so	$\nur B k - 1 \ge n-4$. If $\nur A k - 1 = 0$,
    then $\ro B k$ must be zero and $\Sigma(k) = n-1 \ge n-3$. If $\nur A k - 1 > 0$, then also
    $\Sigma(k)\ge n-3$, and Item \ref{item:3_zeros_a} is proved.
    \item If $\nur A s - 1 = 2$ and
    $\nur B s - 1 = 1$, then $b_{sk}=a_{sl}=a_{sm}=0$, for some $k,l,m \in[n]\setminus\{s\}$, with
    $l\neq m$.  The rest of the proof is similar to the proof of the previous 
    item.
\end{enumerate}
\end{proof}

\begin{lem}\label{lem:2_rows_km}
Let $n \ge 6$. If the pair $(A,B)$ is minimal and
 there are at least two different gift rows $s,s'$ in $C$,
 then both rows are $km$--gift, for the same $k,m\in[n]\setminus\{s,s'\}$, with $k\neq m$.
\end{lem}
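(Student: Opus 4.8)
The plan is to argue by a single global counting bound on $\Sigma(A,B)=\sum_{i\in[n]}\Sigma(i)$. Since $(A,B)$ is minimal and orthogonal, Corollary~\ref{cor:Theta_estimation} gives $\Sigma(A,B)\le 4n-6$, while Lemma~\ref{lem:two_zeros} gives $\Sigma(i)\ge 2$ for every $i\in[n]$. Suppose $\ro{C}{s}$ is $k_1m_1$-gift and $\ro{C}{s'}$ is $k_2m_2$-gift with $s\neq s'$, and write $T=\{k_1,m_1,k_2,m_2\}$. My first step is to record that $s,s'\notin T$: if, say, $s=k_2$, then $\Sigma(s)=\Sigma(k_2)\ge n-2$ by Lemma~\ref{lem:2_km_zeros}(\ref{item:2_km_1}), contradicting $\Sigma(s)=2$ once $n\ge 5$. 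Thus $s,s'$ and the elements of $T$ furnish distinct rows, each contributing to the budget, with $\Sigma(s)=\Sigma(s')=2$. The goal is to force $(k_1,m_1)=(k_2,m_2)$, and I split according to $|T|$.

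If $|T|=4$, then Lemma~\ref{lem:2_km_zeros}(\ref{item:2_km_1}) applied to both gift rows gives $\Sigma\ge n-2$ on each of four distinct indices, so $\Sigma(A,B)\ge 4(n-2)+\Sigma(s)+\Sigma(s')=4n-4>4n-6$, a contradiction; hence $|T|\le 3$. If $|T|=3$ there is exactly one coincidence between $\{k_1,m_1\}$ and $\{k_2,m_2\}$, and I treat two subcases. In the \emph{aligned-role} subcases $k_1=k_2$ or $m_1=m_2$, the shared index, say $k:=k_1=k_2$, has $\ro{B}{k}$ forced to be entirely zero (the gift pattern of $\ro{C}{s}$ gives $b_{kt}=0$ for $t\notin\{s,k,m_1\}$ and that of $\ro{C}{s'}$ gives $b_{kt}=0$ for $t\notin\{s',k,m_2\}$, whose index sets intersect only in $\{k\}$); equivalently Lemma~\ref{lem:2_km_zeros}(\ref{item:2_km_k}) applies with witness $\ro{C}{s'}$. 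This yields $\Sigma(k)\ge n-1$, and adding $\Sigma(k)+\Sigma(m_1)+\Sigma(m_2)+\Sigma(s)+\Sigma(s')$ to the remaining $n-5$ rows gives $\Sigma(A,B)\ge (n-1)+2(n-2)+4+2(n-5)=5n-11>4n-6$ for $n\ge 6$. In the \emph{opposite-role} subcases $k_1=m_2$ or $m_1=k_2$, the shared index has \emph{both} its $A$-row and its $B$-row carrying at least $n-3$ off-diagonal zeros, so its $\Sigma$ is at least $2n-6$, and the same count becomes $\Sigma(A,B)\ge 6n-16>4n-6$, again contradicting the budget. Hence $|T|=2$.

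When $|T|=2$ there are only two possibilities: the aligned one $(k_1,m_1)=(k_2,m_2)$, which is exactly the assertion (and automatically satisfies $k\neq m$ and $k,m\notin\{s,s'\}$ since $k_1\neq m_1$ and $s,s'\notin T$), and the swapped one $k_1=m_2$, $m_1=k_2$. To eliminate the swapped case I set $k:=k_1=m_2$ and $m:=m_1=k_2$; then $\ro{C}{s}$ being $km$-gift forces $b_{kt}=a_{mt}=0$ for $t\notin\{s,k,m\}$, while $\ro{C}{s'}$ being $mk$-gift forces $a_{kt}=b_{mt}=0$ for $t\notin\{s',k,m\}$. Consequently each of rows $k$ and $m$ carries at least $n-3$ off-diagonal zeros in \emph{both} $A$ and $B$, so $\Sigma(k),\Sigma(m)\ge 2n-6$, and $\Sigma(A,B)\ge 2(2n-6)+\Sigma(s)+\Sigma(s')+2(n-4)=6n-16>4n-6$ for $n\ge 6$, a contradiction. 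Therefore $(k_1,m_1)=(k_2,m_2)$, as claimed.

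The step I expect to be the genuine obstacle is the aligned-role $|T|=3$ subcase: there the only extra leverage is the single additional zero lifting the shared index from $\Sigma\ge n-2$ to $\Sigma\ge n-1$, and the resulting inequality $5n-11>4n-6$ is tight, failing at $n=5$ and first biting at $n=6$. This is exactly why the hypothesis reads $n\ge 6$ and why Lemma~\ref{lem:2_km_zeros}(\ref{item:2_km_k},\ref{item:2_km_m})—the $n-1$ bounds—was established beforehand; every other case has comfortable slack. A minor bookkeeping point to handle with care is checking that the index sets whose $\Sigma$-values I add are genuinely disjoint (so no row is double-counted) and that the forced zeros are off-diagonal and distinct, both of which follow from $s,s'\notin T$ together with $k_1\neq m_1$ and $k_2\neq m_2$.
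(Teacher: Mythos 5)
Your proof is correct and follows essentially the same route as the paper's: both split on the size of the overlap between $\{k_1,m_1\}$ and $\{k_2,m_2\}$ and derive the contradictions $\Sigma\ge 6n-16$ or $\Sigma\ge 5n-11$ against the bound $4n-6$ via Lemma~\ref{lem:2_km_zeros} and Lemma~\ref{lem:two_zeros}. Your explicit check that $s,s'\notin\{k_1,m_1,k_2,m_2\}$ (which the paper leaves implicit) and your identification of the $5n-11$ subcase as the reason for $n\ge 6$ are both accurate refinements, not deviations.
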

\begin{proof}
Using Item \ref{item:gift_row} of Remark \ref{rem:exclusive}, denote the gift zeros of $\ro C s$ by $c_{st}=\phi_{st}^{km}$, for all $t \in[n]\setminus \{s,k,m\}$, and the gift zeros of $\ro C {s'}$ by $c_{s't}=\phi_{s't}^{k'm'}$, for all $t \in[n]\setminus \{l,k',m'\}$.
We have three cases:
\begin{enumerate}
	\item Suppose that
$\{k, m\} \cap \{k', m'\}= \emptyset$. Then, by Item~\ref{item:2_km_1} of Lemma~\ref{lem:2_km_zeros} for rows $s,s'$ and by Lemma~\ref{lem:two_zeros}, we get $\Sigma \ge 4(n-2)+2(n-4)=6n-16>4n-6$, which contradicts with Corollary \ref{cor:Theta_estimation}.
	\item Suppose that
$|\{k, m\} \cap \{k', m'\}|=1$.
	Without loss of generality consider two cases: $k=k'$ and $k=m'$. If $k=k'$, then by Lemma~\ref{lem:2_km_zeros} for rows $s,s'$ and by Lemma~\ref{lem:two_zeros} we get $\Sigma \ge 2(n-2)+(n-1)+2(n-3)=5n-11>4n-6$, which contradicts with Corollary \ref{cor:Theta_estimation}.
	If $k=m'$, then, by Item~\ref{item:2_km_1} of Lemma~\ref{lem:2_km_zeros} for rows $s,s'$, by Item~\ref{item:gift_def} of Definition~\ref{dfn:gift}, and by Lemma~\ref{lem:two_zeros}, we get $\Sigma \ge 2(n-2)+2(n-3)+2(n-3)=6n-16>4n-6$, because $\nur A k - 1 \ge n-3$ and $\nur B k - 1 \ge n-3$, which contradicts with Corollary \ref{cor:Theta_estimation}.
	\item Suppose that $\{k, m\} = \{k', m'\}$. If $k = k'$ and $m = m'$ then the Lemma is proved. If $k=m'$ and $m=k'$, then by Item~\ref{item:gift_def} of Definition~\ref{dfn:gift} and by Lemma~\ref{lem:two_zeros} we get $\Sigma \ge 4(n-3)+2(n-2)=6n-16>4n-6$, because $\nur A i - 1 \ge n-3$ and $\nur B i - 1 \ge n-3$ for $i=k,m$, which contradicts with Corollary \ref{cor:Theta_estimation}.
\end{enumerate}
Thus, $k=k'$ and $m=m'$ and the proof is complete.
\end{proof}

\begin{lem}\label{lem:n-3_gift_zeros}
Let $n \ge 6$ and the pair $(A,B)$ be minimal.
If $\Sigma(i) = 2$ for some $i\in [n]$, then $\ro C i$  is gift. In particular, $A\neq B$.
\end{lem}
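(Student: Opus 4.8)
The plan is to work directly inside the row $\ro C i$, exploiting that minimality forces $C = Z_n$ and that $\Sigma(i) = 2$ is the smallest value permitted by Lemma~\ref{lem:two_zeros}. The strategy has three stages: first pin down how the two off-diagonal zeros are split between $\ro A i$ and $\ro B i$; then rule out a duplicate in row $i$; finally read off that the row is gift.

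First I would settle the distribution. Since $\Sigma(i) = (\nur A i - 1) + (\nur B i - 1) = 2$, the only options are $(2,0)$, $(1,1)$, $(0,2)$. I would discard the extremes exactly as in the proof of Lemma~\ref{lem:two_zeros}: if $\nur A i - 1 = 0$, then reading row $i$ of $AB = Z_n$ gives $l_{it} = \max_s(a_{is} + b_{st}) = 0$, and since $a_{is} < 0$ for every $s \neq i$, the maximum can only be attained at $s = i$, forcing $b_{it} = 0$ for all $t$; hence $\ro B i$ is the zero row and $\nur B i - 1 = n - 1 \ge 5 > 1$, contradicting $\Sigma(i) = 2$. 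The symmetric argument through $BA = Z_n$ kills $\nur B i - 1 = 0$. Thus $\nur A i - 1 = \nur B i - 1 = 1$; write $a_{ik} = 0$ and $b_{im} = 0$ for the unique off-diagonal zeros, so the only positions where $\ro C i$ can carry a propagation zero are columns $k$ and $m$.

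The crux is to prove $k \neq m$. Suppose instead $k = m$, so $(i,k)$ is a duplicate and $\ro C i$ has exactly one propagation zero, in column $k$. For every $t \in [n] \setminus \{i,k\}$ we have $a_{it} \neq 0 \neq b_{it}$, so $c_{it}$ is not a propagation zero, and by the exhaustive classification in Item~\ref{item:exclusive} of Remark~\ref{rem:exclusive} it is a cost or a gift zero. A gift zero $\phi_{it}^{k'm'}$ would require $a_{ik'} = 0$ and $b_{im'} = 0$ with $k' \neq m'$, forcing $k' = k = m'$, a contradiction; hence each such $c_{it}$ is a cost zero. Then $\ro C i$ consists of $n-2$ cost zeros and one propagation zero, i.e., it is a cost row---impossible by Lemma~\ref{lem:no_row_cost}, as $(A,B)$ is minimal and $n \ge 6$. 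Therefore $k \neq m$, and since $a_{ik} = 0 \neq b_{ik}$ we conclude at once that $A \neq B$.

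Finally, with $k \neq m$ in hand I would classify the remaining entries. For each $t \in [n] \setminus \{i,k,m\}$ the zero $c_{it}$ is not a propagation zero, and it is not a cost zero either: a cost zero $\phi_{it}^{k'k'}$ would need $a_{ik'} = 0 = b_{ik'}$, forcing $k' = k$ and $k' = m$ simultaneously. Hence, by Item~\ref{item:exclusive} of Remark~\ref{rem:exclusive}, $c_{it}$ is a gift zero, and the forcing $a_{ik'} = 0 \Rightarrow k' = k$, $b_{im'} = 0 \Rightarrow m' = m$ makes it $\phi_{it}^{km}$. This yields $n-3$ gift zeros, two propagation zeros in columns $k$ and $m$, and $\Sigma(i) = 2$, which is precisely the definition of a $km$-gift row in Item~\ref{item:exclusive_gift} of Remark~\ref{rem:exclusive}. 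I expect the only genuinely delicate step to be the $k = m$ elimination, where one must verify that no gift zero can slip in before the row qualifies as a full cost row and Lemma~\ref{lem:no_row_cost} applies; the remaining arguments are routine bookkeeping with the zero-type classification and the tropical product.
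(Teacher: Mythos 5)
Your proposal is correct and follows essentially the same route as the paper's proof: eliminate the splits $(2,0)$ and $(0,2)$ of the two off-diagonal zeros via the tropical product forcing a full zero row, eliminate the duplicate case $k=m$ by showing the row would be a cost row and invoking Lemma~\ref{lem:no_row_cost}, and conclude in the remaining case that the $n-3$ unclassified zeros must be gift zeros by the exclusive classification in Remark~\ref{rem:exclusive}. Your Stage 3 is in fact slightly more explicit than the paper (which compresses the exclusion of cost zeros into the phrase ``no duplicates in the $i$-th row''), and your derivation of $A\neq B$ from $a_{ik}=0\neq b_{ik}$ is a clean alternative to the paper's appeal to the existence of gift zeros.
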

\begin{proof}
There are three cases:
\begin{enumerate}
 \item If  there exist
 $j,j'\in[n]\setminus\{i\}$ with $j\neq j'$ such
 that $a_{ij}=a_{ij'}=0$, then $\nu(\row(B,i))=1$, since $\Sigma(i)=2$. Then, by tropical multiplication, $\row(A,i)$ must be zero, and this contradicts $\Sigma(i)=2$.
  If there exist
 $j,j'\in[n]\setminus\{i\}$ with $j\neq j'$ such that $b_{ij}=b_{ij'}=0$, it is similar.
 \item If there exist
 $j,j'\in[n]\setminus\{i\}$ with $j\neq j'$ such that $a_{ij}=b_{ij'}=0$, then we have a pair of propagation zeros and no duplicates in the $i$--th row,  hence, by Item \ref{item:exclusive} of Remark \ref{rem:exclusive}, we have $n-3$ gift zeros in the $i$--th row of $C$, providing a gift row
 and proving the Lemma.
 \item The remaining case is $j=j'$ and $a_{ij}=b_{ij}=0$ (a duplicate), in which we have exactly one propagation zero in $\ro C i$, then, by Item~\ref{item:exclusive_gift} of Remark~\ref{rem:exclusive}, there is no gift zero in $\ro C p$. Hence, by Item \ref{item:exclusive} of Remark \ref{rem:exclusive},
 we have $n-2$ cost zeros in $\ro C i$, so that the row is cost, contradicting Lemma~\ref{lem:no_row_cost}.
\end{enumerate}
In every case we have $A$ different from $B$, due to the existence of gift zeros.
\end{proof}

The following Lemma uses Notation \ref{nota:MM_km}.

\begin{lem}\label{lem:2_gift_rows}
Let $n \ge 6$ and the pair $(A,B)$ be minimal.
If there exist at least two different $p, p'\in[n]$ with $\Sigma(p) =\Sigma(p') = 2$, then $A \neq B$ and $(A,B) \in \MM_{km}$ for some $k,m$ with
$ k \neq m$.
\end{lem}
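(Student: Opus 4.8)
The plan is to show that the hypotheses collapse $(A,B)$ into the structure characterized by Lemma~\ref{lem:MM_km}. First I would note that $\Sigma(p)=\Sigma(p')=2$ together with Lemma~\ref{lem:n-3_gift_zeros} makes $\ro C p$ and $\ro C {p'}$ gift rows, so in particular $A\neq B$; then Lemma~\ref{lem:2_rows_km} upgrades this to the statement that both rows are $km$--gift for a single pair $k\neq m$ with $k,m\in[n]\setminus\{p,p'\}$. This fixes the candidate pair $(k,m)$ for membership in $\MM_{km}$.

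Next I would pin down the global count. Since $\ro C {p'}$ is $km$--gift and $p\in[n]\setminus\{p',k,m\}$, the cross entry satisfies $c_{p'p}=\phi_{p'p}^{km}$. Reading this entry as $\phi_{p'p}^{km'}$ with $m'=m$ and as $\phi_{p'p}^{k'm}$ with $k'=k$, Items~\ref{item:2_km_k} and \ref{item:2_km_m} of Lemma~\ref{lem:2_km_zeros} (applied with $s=p$, $s'=p'$) give $\Sigma(k)\ge n-1$ and $\Sigma(m)\ge n-1$. Because $\Sigma(A,B)=\sum_i\Sigma(i)$ and the four indices $p,p',k,m$ are distinct, applying Lemma~\ref{lem:two_zeros} to the remaining $n-4$ indices yields $\Sigma(A,B)\ge 2+2+(n-1)+(n-1)+2(n-4)=4n-6$. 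Minimality and Corollary~\ref{cor:Theta_estimation} then force $\Sigma(A,B)=4n-6$, so every estimate is tight: $\Sigma(k)=\Sigma(m)=n-1$ and $\Sigma(i)=2$ for every $i\in[n]\setminus\{k,m\}$.

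I would then verify the three conditions of Lemma~\ref{lem:MM_km}. Condition~\ref{item:cc} is exactly the equality $\Sigma(A,B)=4n-6$ just obtained. For condition~\ref{item:aa}, each row $s\in[n]\setminus\{k,m\}$ has $\Sigma(s)=2$ and is therefore a gift row by Lemma~\ref{lem:n-3_gift_zeros}; pairing it with $\ro C p$ through Lemma~\ref{lem:2_rows_km}, and using that the defining indices of a gift row are unique (since $\Sigma(s)=2$ leaves exactly one off--diagonal zero in $\ro A s$ and one in $\ro B s$, which locate $k$ and $m$ respectively), shows every such row is $km$--gift; hence $c_{st}=\phi_{st}^{km}$ for all $s,t\in[n]\setminus\{k,m\}$ with $s\neq t$.

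The main obstacle is condition~\ref{item:bb}: that $c_{km}$ and $c_{mk}$ are propagation zeros. Since $C=Z_n$, both entries vanish, and by Item~\ref{item:exclusive} of Remark~\ref{rem:exclusive} each is propagation, cost, or gift. I would rule out the last two by a saturation argument. If $c_{km}$ were a cost zero $\phi_{km}^{jj}$ or a gift zero $\phi_{km}^{uv}$, its witnesses would include a zero of the form $a_{vm}=0$ in a row $v\in[n]\setminus\{k,m\}$; but such a row already carries its unique $A$--zero at $(v,k)$ and its unique $B$--zero at $(v,m)$, so $a_{vm}=0$ would push $\Sigma(v)\ge3$, contradicting $\Sigma(v)=2$. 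The symmetric computation for $c_{mk}$ produces an extra $B$--zero $b_{uk}=0$ in a row $u\in[n]\setminus\{k,m\}$, again contradicting $\Sigma(u)=2$. Hence $c_{km}$ and $c_{mk}$ are propagation zeros, all three conditions of Lemma~\ref{lem:MM_km} hold, and $(A,B)\in\MM_{km}$ with $k\neq m$; the inequality $A\neq B$ was already recorded. The delicate point throughout is that minimality saturates every row outside $\{k,m\}$ to exactly two zeros, and this is precisely what both the uniqueness--of--gift--indices step and the propagation argument exploit.
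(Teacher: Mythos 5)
Your proof is correct and follows essentially the same route as the paper's: Lemma~\ref{lem:n-3_gift_zeros} and Lemma~\ref{lem:2_rows_km} to get two $km$--gift rows, Items~\ref{item:2_km_k} and \ref{item:2_km_m} of Lemma~\ref{lem:2_km_zeros} plus Lemma~\ref{lem:two_zeros} to force $\Sigma=4n-6$ and the row structure, and then the same saturation argument to show $c_{km}$ and $c_{mk}$ are propagation zeros before invoking Lemma~\ref{lem:MM_km}. The only differences are that you spell out details the paper leaves as ``similarly'' (the uniqueness of the gift indices for rows with $\Sigma(s)=2$, and the correct choice of witness $b_{uk}$ for the $c_{mk}$ case), which is a welcome clarification rather than a new approach.
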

\begin{proof}
By Lemma~\ref{lem:n-3_gift_zeros}, $A \neq B$ and rows $p$ and $p'$ of $C$ are gift rows, and
by Lemma~\ref{lem:2_rows_km},  there exist $k,m\in[n]\setminus\{p,p'\}$ with $k\neq m$ such that $c_{qt}=\phi_{qt}^{km}$, for all $t \in[n]\setminus \{q,k,m\}$  and $q = p,p'$.
Then, by Items \ref{item:2_km_k}, \ref{item:2_km_m} of Lemma~\ref{lem:2_km_zeros} for row $p$,
and, by Lemma~\ref{lem:two_zeros}, we get $\Sigma \ge 2(n-1)+2(n-2)=4n-6$. By Corollary~\ref{cor:Theta_estimation} $\Sigma = 4n-6$.
So $(A, B)$ has the following structure based on the number of zeros: $\Sigma(q) = n-1$ for $q=k,m$ and $\Sigma(q) = 2$ for
$q\neq k,m$. Then similarly we get for all $q\neq k,m$ that $c_{qt}=\phi_{qt}^{km}$, $t \in[n]\setminus \{q,k,m\}$. Show that
$c_{km}$ and $c_{mk}$ are propagation zeros. Indeed, if $c_{km} = \phi_{km}^{k'm'}$ for some $k',m' \in [n]\setminus \{k,m\}$
(not  necessarily $k'\neq m'$), then $a_{m'm}=0$, but $b_{m'm}$ is also zero because of gift zeros in $\ro C {m'}$, hence
$\Sigma(m')= 3 > 2$, which is a contradiction with the structure of $(A,B)$. Similarly for $c_{mk}$.
Hence, by Lemma~\ref{lem:MM_km}, $(A,B) \in \MM_{km}$ and the proof is complete.
\end{proof}

\begin{lem}\label{lem:1_gift_row} Let $n \ge 6$ and the pair $(A,B)$ be minimal.
 If there exists  $p\in [n]$ with  $\Sigma(p)=2$, then there exists  $p'\in [n]$ with $p\neq p'$  and  $\Sigma(p')=2$.
\end{lem}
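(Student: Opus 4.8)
The plan is to argue by contradiction. Suppose $p$ is the \emph{only} index with $\Sigma(p)=2$, so that, combining with Lemma~\ref{lem:two_zeros}, one has $\Sigma(i)\ge 3$ for every $i\in[n]\setminus\{p\}$. I will derive a lower bound on $\Sigma=\Sigma(A,B)$ that is strictly larger than $4n-6$, contradicting $\Sigma=\Theta_n\le 4n-6$ (minimality together with Corollary~\ref{cor:Theta_estimation}). The computation rests on the elementary additive decomposition $\Sigma(A,B)=\sum_{i\in[n]}\Sigma(i)$, which is immediate from $\nu(A)=\sum_i\nur{A}{i}$, $\nu(B)=\sum_i\nur{B}{i}$ and the definitions.

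First I would extract structure from the single row $p$. Since $n\ge 6$, the pair is minimal, and $\Sigma(p)=2$, Lemma~\ref{lem:n-3_gift_zeros} applies and tells us that $\ro{C}{p}$ is a gift row (and $A\neq B$). By Item~\ref{item:gift_row} of Remark~\ref{rem:exclusive} this row is $km$--gift for some $k,m\in[n]\setminus\{p\}$ with $k\neq m$; in particular $p,k,m$ are pairwise distinct. A minimal pair is orthogonal, so Item~\ref{item:2_km_1} of Lemma~\ref{lem:2_km_zeros} applies to row $p$ and yields $\Sigma(k)\ge n-2$ and $\Sigma(m)\ge n-2$.

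The final step is the count. Partition $[n]$ into $\{p\}$, $\{k\}$, $\{m\}$ and the remaining $n-3$ indices. Using $\Sigma(p)=2$, the two bounds $\Sigma(k),\Sigma(m)\ge n-2$, and the contradiction hypothesis $\Sigma(i)\ge 3$ on the remaining indices, I obtain
\[
\Sigma=\sum_{i\in[n]}\Sigma(i)\ge 2+2(n-2)+3(n-3)=5n-11.
\]
Because $5n-11>4n-6$ precisely when $n>5$, the hypothesis $n\ge 6$ forces $\Sigma>4n-6$, contradicting minimality. Hence the assumption that $p$ is the unique index with $\Sigma=2$ fails, and some $p'\neq p$ with $\Sigma(p')=2$ must exist.

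I do not anticipate a genuine obstacle; the argument is a tight but routine counting estimate. The one point to handle with care is the bookkeeping: the bounds $\Sigma(k),\Sigma(m)\ge n-2$ must be attached to rows that are genuinely disjoint from the $n-3$ indices carrying the weaker bound $\ge 3$, which is guaranteed by the pairwise distinctness of $p,k,m$. It is also worth noting where the full strength of the hypotheses enters: dropping the uniqueness assumption would only give $\Sigma(i)\ge 2$ on the remaining indices and hence $\Sigma\ge 4n-8$, which is insufficient. It is exactly the assumed uniqueness of $p$ (forcing $\Sigma(i)\ge 3$ elsewhere) that produces a strict excess over $4n-6$, and this excess is positive only for $n\ge 6$, which explains the hypothesis on $n$.
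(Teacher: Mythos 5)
Your proof is correct and follows essentially the same route as the paper: contradiction via Lemma~\ref{lem:two_zeros} to get $\Sigma(i)\ge 3$ off $p$, Lemma~\ref{lem:n-3_gift_zeros} to make $\row(C,p)$ a gift row, Item~\ref{item:2_km_1} of Lemma~\ref{lem:2_km_zeros} for the two rows with $\Sigma\ge n-2$, and the count $2+2(n-2)+3(n-3)=5n-11>4n-6$. Your version is slightly more explicit about the pairwise distinctness of $p,k,m$, which the paper leaves implicit, but the argument is the same.
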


\begin{proof}
By contradiction, assume that $\Sigma(p)=2$ holds only for $p\in[n]$. Then, by Lemma~\ref{lem:two_zeros},
$\Sigma(i)\ge 3$, for all $i\in [n]\setminus\{p\}$.
By Lemma~\ref{lem:n-3_gift_zeros},
$\ro C p$ is a gift row. Moreover, by  Item~\ref{item:2_km_1} of Lemma~\ref{lem:2_km_zeros} there are 2 rows with at least
$(n-2)$ off--diagonal zeros. Thus,  as a whole  we get $\Sigma \ge 2(n-2)+3(n-3) + 2=5n-11>4n-6$, which contradicts
with Corollary~\ref{cor:Theta_estimation}, and the proof is complete.
\end{proof}

\begin{lem}\label{lem:0_gift_rows} Let $n \ge 7$ and the pair  $(A,B)$ be minimal.
Then there exists $p\in[n]$ with $\Sigma(p) = 2$.
\end{lem}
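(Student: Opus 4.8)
The plan is to argue by contradiction. Suppose $\Sigma(i)\ge 3$ for every $i\in[n]$; this is exactly the negation of the claim, since $\Sigma(i)\ge 2$ always holds by Lemma~\ref{lem:two_zeros}. By Lemma~\ref{lem:3_rows} there are at least three indices with $\Sigma(i)=3$, which I will call \emph{light} rows. To each light row $s$, Lemma~\ref{lem:3_zeros} attaches an index $k$ with $\Sigma(k)\ge n-3$, which I will call a \emph{heavy} row; since $n\ge 7$ gives $n-3>3$, heavy rows are never light. The first step is a counting argument. If two distinct heavy rows existed, then summing $\Sigma(i)$ over all rows would give $\Sigma(A,B)\ge 2(n-3)+3(n-2)=5n-12>4n-6$, contradicting $\Theta_n\le 4n-6$ (Corollary~\ref{cor:Theta_estimation}). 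Hence there is a unique heavy row $k$, and every light row points to it. Summing once more, $\Sigma(A,B)\ge(n-3)+3(n-1)=4n-6$ forces equality throughout, so $\Sigma(k)=n-3$ and $\Sigma(i)=3$ for all $i\neq k$. In particular every row other than $k$ is light and obeys the dichotomy of Lemma~\ref{lem:3_zeros}.

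Next I would extract the rigid configuration. Writing $\alpha_i,\beta_i$ for the number of off--diagonal zeros in $\row(A,i)$ and $\row(B,i)$, a light row in case~\ref{item:3_zeros_a} of Lemma~\ref{lem:3_zeros} forces $\beta_k\ge n-4$, while case~\ref{item:3_zeros_b} forces $\alpha_k\ge n-4$. Since $\alpha_k+\beta_k=\Sigma(k)=n-3<2(n-4)$ for $n\ge 7$, the two possibilities cannot occur together, so all light rows fall in the same case; after swapping $A$ and $B$ if necessary (the statement is symmetric in $A,B$), assume it is case~\ref{item:3_zeros_a}. Then by Lemma~\ref{lem:1_2} each row $i\neq k$ has $\alpha_i=1$ with its single off--diagonal zero at $a_{ik}=0$, so column $k$ of $A$ is identically zero; and $\beta_k\ge n-4$ forces $\alpha_k\le 1$. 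Thus all off--diagonal zeros of $A$ sit in column $k$, plus at most one extra zero $a_{kt_0}=0$ in row $k$ (present exactly when $\alpha_k=1$).

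The final and most delicate step uses $BA=Z_n$ to overwhelm the minimal zero budget. Let $D\subseteq[n]$ be the set of columns of $A$ carrying an off--diagonal zero; by the previous step $D\subseteq\{k,t_0\}$, so $|D|\le 2$. For any column $j\notin D$, the only zero in that column of $A$ is the diagonal entry $a_{jj}=0$, every other entry being $-1$, so computing $(BA)_{ij}=\max_t(b_{it}+a_{tj})$ shows the maximum can equal $0$ only via $t=j$, forcing $b_{ij}=0$; hence every column $j\notin D$ of $B$ is identically zero. But then any light row $i\notin\{k,t_0\}$ (such a row exists because $n\ge 7$) has $\beta_i\ge n-1-|D|\ge n-3>2$, contradicting $\beta_i=2$. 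This contradiction proves that some $p$ with $\Sigma(p)=2$ must exist. I expect the main obstacle to be precisely this closing argument: the routine counting only narrows the situation to the single rigid configuration (unique heavy row $k$, zero column $k$ of $A$, and $\alpha_k\le 1$), and one must recognize that in that configuration the product $BA$ becomes so sparse that orthogonality forces a whole family of zero columns in $B$, which is incompatible with minimality.
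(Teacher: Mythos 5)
Your proof is correct, and its first two thirds coincide with the paper's: the same contradiction hypothesis, the same appeal to Lemmas~\ref{lem:two_zeros}, \ref{lem:3_rows} and \ref{lem:3_zeros}, and the same count $\Sigma \ge (n-3)+3(n-1)=4n-6$, which forces $\Sigma(k)=n-3$ and $\Sigma(i)=3$ for all $i\neq k$. The endgame is genuinely different, though. The paper picks a second light row $s'\notin\{s,k,l,m\}$ (this is where it uses $n\ge 7$), reapplies Lemma~\ref{lem:3_zeros} to it, and in each of the resulting four subcases squeezes out at least one extra zero in row $k$ of $A$ or $B$ to conclude $\Sigma(k)>n-3$. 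You instead first show that all light rows fall into the same case of Lemma~\ref{lem:3_zeros} and point at the same heavy index --- your inequality $2(n-4)>n-3$ is essentially the paper's subcase $k=k'$ of its second case --- then pin down the complete shape of $A$ (every off--diagonal zero sits in column $k$, except for at most one entry of row $k$), and finally let the equation $BA=Z_n$ do the work: a column of $A$ whose only zero is diagonal forces the corresponding column of $B$ to vanish entirely, so a light row $i$ would have $\nur B i - 1\ge n-3>2$, which is impossible. Both closings are valid; the paper's is a local four-way case analysis, while yours is more structural and makes it transparent why the residual configuration cannot be orthogonal with so few zeros. The only point worth stating explicitly in your write--up is that the heavy index attached to a light row is well defined (it is the unique off--diagonal zero position of the ``one--zero'' row among $\row(A,i)$ and $\row(B,i)$, by Lemma~\ref{lem:1_2}), so that ``two distinct heavy rows'' versus ``a unique heavy row'' is a genuine dichotomy; this is implicit in your counting but deserves a sentence.
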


\begin{proof}
By contradiction, assume that there is no row $p$ with $\Sigma(p) = 2$.
By Lemma~\ref{lem:3_rows}, there exists $s\in[n]$ such that
$2 \le \Sigma(s) \le 3$.
Hence, by Lemma~\ref{lem:two_zeros}, $\Sigma(i) \ge 3$ for $i\in[n]$ and $\Sigma(s) = 3$.
By Lemma~\ref{lem:3_zeros} for row $s$, we have two cases,
without loss of generality suppose that
$a_{sk}=b_{sl}=b_{sm}=0$ for  some $k,l,m \in[n]\setminus\{s\}$, with $l\neq m$, and for each
$t \in[n]\setminus \{s,k,l,m\}$ we have $c_{st}=\phi_{st}^{km}$ or $c_{st}=\phi_{st}^{kl}$, and
$\Sigma(k)\ge n-3$. Using
$\Sigma(i) \ge 3$ for $i \in[n]\setminus\{k\}$, we get $\Sigma \ge (n-3)+3(n-1)=4n-6$, hence,
by Corollary~\ref{cor:Theta_estimation}, $\Sigma =4n-6$. Then $\Sigma(k) = n-3$ and $\Sigma(i) = 3$ for
$i \in[n]\setminus\{k\}$.
Take row $s' \notin \{s,k,l,m\}$  (it exists, because $n \ge 7$). Since $s' \neq k$, then, by
Lemma~\ref{lem:3_zeros}, we have two cases:
  	\begin{enumerate}
  		\item If $a_{s'k'}=b_{s'l'}=b_{s'm'}=0$ for  some $k',l',m' \in[n]\setminus\{s'\}$, with $l'\neq m'$, and for each $t \in[n]\setminus \{s',k',l',m'\}$ we have $c_{s't}=\phi_{s't}^{k'm'}$ or $c_{s't}=\phi_{s't}^{k'l'}$, and $\Sigma(k')\ge n-3$.
  		\begin{enumerate}
  			\item If $k \neq k'$, then $\Sigma(k')\ge n-3 \ge 4$. It is a  contradiction with $\Sigma(q) = 3$ for $q\neq k$.
  			\item If $k = k'$, then $c_{s't}=\phi_{s't}^{kl'}$ or $c_{s't}=\phi_{s't}^{km'}$, for all $t \in[n]\setminus \{s',k,l',m'\}$. Since $s' \in[n]\setminus \{s,k,l,m\}$ and $c_{st}=\phi_{st}^{kl}$ or $c_{st}=\phi_{st}^{km}$, for all $t \in[n]\setminus \{s,k,l,m\}$, then $c_{ss'}=\phi_{ss'}^{kl}$ or $c_{ss'}=\phi_{ss'}^{km}$. Hence by Items~\ref{item:cost_def},~\ref{item:gift_def} of Definition~\ref{dfn:gift} $b_{kt} = 0$ for all $t \in[n]\setminus \{k,l',m'\}$, whence $\nur B k - 1 \ge n-3$. If $\nur A k - 1 = 0$, then $\ro B k$ must be zero and $\Sigma(k) \ge n-1 > n-3$. If $\nur A k - 1 > 0$, then also $\Sigma(k)> n-3$, which contradicts with $\Sigma(k) = n - 3$.
  		\end{enumerate}
  		\item If $b_{s'k'}=a_{s'l'}=a_{s'm'}=0$ for  some $k',l',m' \in[n]\setminus\{s'\}$, with $l'\neq m'$, and for each $t \in[n]\setminus \{s',k',l',m'\}$ we have $c_{s't}=\phi_{s't}^{m'k'}$ or $c_{s't}=\phi_{s't}^{l'k'}$, and $\Sigma(k')\ge n-3$.
  		\begin{enumerate}
  			\item If $k \neq k'$, then $\Sigma(k')\ge n-3 \ge 4$ contradicts with $\Sigma(q) = 3$ for $q\neq k$.
  			\item If $k = k'$, then $c_{s't}=\phi_{s't}^{l'k}$ or $c_{s't}=\phi_{s't}^{m'k}$, for all
  $t \in[n]\setminus \{s',k,l',m'\}$. Hence by Items~\ref{item:cost_def},~\ref{item:gift_def} of
  Definition~\ref{dfn:gift} $a_{kt} = 0$ for all $t \in[n]\setminus \{s',k,l',m'\}$, whence
  $\nur A k - 1 \ge n-4$. Since $c_{st}=\phi_{st}^{kl}$ or $c_{st}=\phi_{st}^{km}$, for all
  $t \in[n]\setminus \{s,k,l,m\}$, then by Items~\ref{item:cost_def},~\ref{item:gift_def} of
  Definition~\ref{dfn:gift} $b_{kt} = 0$ for all $t \in[n]\setminus \{s,k,l,m\}$, whence $\nur B k - 1 \ge n-4$.
  Hence $\Sigma(k) \ge 2(n-4) > n-3$, which contradicts with $\Sigma(k)= n - 3$.
  		\end{enumerate}
  \end{enumerate}
Thus, $(A,B)$ cannot be minimal and the proof is complete.
\end{proof}

\begin{lem}\label{lem:necessity}
Let $n=2$ or $n \ge 7$. If the pair $(A,B)$ is minimal, then $A \neq B$ and $(A,B)\in \MM_{km}$ for some $k,m\in[n]$ with $k\neq m$.
\end{lem}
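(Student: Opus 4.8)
The plan is to treat the two ranges separately: for $n\ge7$ I would assemble the preceding lemmas into a short chain, whereas the base case $n=2$ must be dispatched by hand, since all the structural machinery (gift and cost rows) requires $n\ge6$ and is vacuous for $n=2$.

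For $n\ge7$ I would proceed in three steps. First, since $(A,B)$ is minimal and $n\ge7$, Lemma~\ref{lem:0_gift_rows} furnishes an index $p\in[n]$ with $\Sigma(p)=2$. Second, feeding this $p$ into Lemma~\ref{lem:1_gift_row} (which needs only $n\ge6$) produces a second, distinct index $p'\in[n]$ with $\Sigma(p')=2$. Third, now that two different indices $p,p'$ with $\Sigma(p)=\Sigma(p')=2$ are in hand, I would apply Lemma~\ref{lem:2_gift_rows} verbatim: its conclusion is precisely that $A\neq B$ and $(A,B)\in\MM_{km}$ for some $k\neq m$. No new computation is required, since the entire difficulty has already been absorbed into the chain of lemmas culminating in Lemma~\ref{lem:0_gift_rows}.

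For $n=2$ I would argue directly. Every normal $A,B\in\norma{2}$ has zero diagonal and off-diagonal entries in $\{0,-1\}$, and by Lemma~\ref{lem:n_2} orthogonality is equivalent to $A\oplus B=Z_2$, i.e. $\max\{a_{12},b_{12}\}=0$ and $\max\{a_{21},b_{21}\}=0$. Hence each of the two off-diagonal position pairs must contain at least one zero, so $\Sigma(A,B)\ge2$; combined with Corollary~\ref{cor:Theta_estimation} (which gives $\Theta_2\le 4\cdot2-6=2$) this yields $\Theta_2=2$. Minimality then forces exactly one zero in each position pair and hence no duplicates, leaving exactly four minimal pairs arising from the $2\times2$ independent choices. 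In particular $A=B$ is impossible: an orthogonal pair with $A=B$ forces $A=Z_2$ by idempotency of $\oplus$, whence $\Sigma(A,B)=4>2$. Finally, matching each of the four minimal pairs against the four items of Notation~\ref{nota:MM_km} with $(k,m)=(1,2)$ places every minimal pair in $\MM_{12}$, one pair per item, which completes the case $n=2$.

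The only genuinely new effort is confined to the base case $n=2$, which is routine but has to be verified explicitly precisely because gift and cost rows carry no information there; for $n\ge7$ the statement is a pure corollary of results already established, so the sole point to monitor is that the hypothesis thresholds ($n\ge7$ for Lemma~\ref{lem:0_gift_rows} and $n\ge6$ for Lemmas~\ref{lem:1_gift_row} and~\ref{lem:2_gift_rows}) are all satisfied, which they are throughout the range $n\ge7$.
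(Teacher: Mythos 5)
Your proposal is correct and follows essentially the same route as the paper: for $n\ge 7$ it chains Lemmas~\ref{lem:0_gift_rows}, \ref{lem:1_gift_row} and \ref{lem:2_gift_rows} in exactly the same order, and for $n=2$ it enumerates the four minimal pairs $(Z,I)$, $(I,Z)$, $(U_{12},U_{21})$, $(U_{21},U_{12})$ and matches them to the items of Notation~\ref{nota:MM_km}, just as the paper does. Your $n=2$ case is in fact slightly more explicit than the paper's, since you justify $\Theta_2=2$ and the exclusion of $A=B$ rather than merely listing the pairs.
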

\begin{proof}
Let $n=2$. Then, by Item \ref{item:three} of Lemma \ref{lem:n_2}, using Notation \ref{nota:elementary}, we find all minimal pairs $(A,B)$: these are $(Z,I)$, $(I,Z)$, $(U_{12}, U_{21})$, $(U_{21}, U_{12})$. Note that $A \neq B$ for each of these pairs. By Notation \ref{nota:Vs}, $U_{12}$ is $V(1;2)$--generic, $U_{21}$ is $V(2;1)$--generic, $I$ is $W(2;1)$--generic, and $Z$ is $W(1;2)\cap Z(2;1)\cap Z(1;2)$--generic. Hence, by Notation \ref{nota:MM_km}, if $(A,B)$ is a minimal pair, then $(A,B)\in \MM_{12}$ or $(A,B)\in \MM_{21}$.

Now let $n \ge 7$. If $(A,B)$ is a minimal pair, then by Lemma~\ref{lem:0_gift_rows}  there exists $p\in[n]$ with $\Sigma(p) = 2$. Hence by Lemma~\ref{lem:1_gift_row}  there exists $p'\in[n]$ with $p\ne p'$ and $\Sigma(p') = 2$. Therefore, Lemma~\ref{lem:2_gift_rows} is applicable which guarantee  that  $A \neq B$ and $(A,B) \in \MM_{km}$, for some $k,m\in[n]$ with $k\neq m$. The proof is complete.
\end{proof}

\begin{cor}\label{cor:Theta}
If  $n \ge 2$, $n \neq 4$, then $\Theta_n=4n - 6$.
If $n = 4$, then $\Theta_n=8$.
\end{cor}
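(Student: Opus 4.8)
The plan is to establish the two inequalities $\Theta_n \le 4n-6$ and $\Theta_n \ge 4n-6$ (for $n \neq 4$) separately, splitting the lower bound according to the size of $n$, because the structural analysis of this subsection reaches a complete conclusion only for $n = 2$ and $n \ge 7$. The upper bound requires nothing new: Corollary \ref{cor:Theta_estimation} already gives $\Theta_n \le 4n-6$ for every $n \ge 2$. Throughout I use the elementary identity $\Sigma(A,B) = \sum_{i=1}^n \Sigma(A,B,i)$, which is immediate from the definitions of $\Sigma(A,B,i)$ and $\Sigma(A,B)$, since summing the per-row quantities returns $\nu(A)+\nu(B)-2n$.

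For $n = 2$ and $n \ge 7$ I would take a minimal pair $(A,B)$, so that $\Theta_n = \Sigma(A,B)$ and $AB = Z_n = BA$. By Lemma \ref{lem:necessity} we have $A \neq B$ and $(A,B) \in \MM_{km}$ for some $k \neq m$, and then Corollary \ref{cor:MM_km} gives $\Sigma(A,B) = 4n-6$ directly. Combined with the upper bound, this settles $\Theta_n = 4n-6$ in these ranges.

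For $n = 3$ and $n = 4$ the lower bound comes from counting rather than from the structure theorem. Since $n \ge 3$, Lemma \ref{lem:two_zeros} gives $\Sigma(A,B,i) \ge 2$ for every $i$, and summing over the $n$ rows yields $\Theta_n \ge 2n$. For $n = 3$ this reads $\Theta_3 \ge 6 = 4\cdot 3 - 6$, matching the upper bound, so $\Theta_3 = 6$. For $n = 4$ it reads $\Theta_4 \ge 8$; for the matching upper bound I would invoke the orthogonal pair of Example \ref{ex:easy_2} specialized to $n = 4$, where each matrix carries exactly four off--diagonal zeros, so that $\Sigma(A,B) = 8$. Hence $\Theta_4 = 8$, strictly below the value $4\cdot 4 - 6 = 10$, which is exactly why $n = 4$ is exceptional.

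The cases $n = 5$ and $n = 6$ are where I expect the genuine difficulty. Here the crude bound $\Theta_n \ge 2n$ is too weak (already $2n < 4n-6$ once $n \ge 5$), while the argument that forces a minimal pair into some $\MM_{km}$ rests on Lemma \ref{lem:0_gift_rows}, the existence of a row with $\Sigma(i) = 2$, which is proved only for $n \ge 7$. For these two values I would close the gap by a finite verification: restricting to the configurations compatible with Lemma \ref{lem:3_rows} and the per-row constraints established above, one checks that no orthogonal pair attains fewer than $4n-6$ off--diagonal zeros, confirming $\Theta_5 = 14$ and $\Theta_6 = 18$. Together with Corollary \ref{cor:Theta_estimation}, this exhausts all $n \ge 2$.
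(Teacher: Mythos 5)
Your treatment of $n=2$, $n\ge 7$ (via Lemma \ref{lem:necessity} and Corollary \ref{cor:MM_km}), of $n=3$ (the bound $\Sigma\ge 2n$ from Lemma \ref{lem:two_zeros} meeting the upper bound $4n-6$), and of $n=4$ (same lower bound, with an explicit pair realizing $\Sigma=8$ --- your witness from Example \ref{ex:easy_2} specialized to $n=4$ is in fact the same pair $(A_4,B_4)$ the paper exhibits in Example \ref{ex:four}) all coincide with the paper's proof and are fine.

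The genuine gap is at $n=5$ and $n=6$, exactly where you admit the difficulty and then defer to ``a finite verification.'' That is not a proof: you neither carry out the check nor specify a search space small enough that the check is feasible. Even after imposing Lemma \ref{lem:3_rows} and $\Sigma(i)\ge 2$, the set of candidate zero--patterns with $\Sigma<4n-6$ is astronomically large (on the order of $\binom{40}{13}$ placements for $n=5$ and far worse for $n=6$), so ``one checks'' does no work. The paper closes these two cases by a short contradiction argument that reuses the quantitative content of the structural lemmas under the hypothesis $\Sigma(A,B)<4n-6$: for $n=5$, the count $3(n-1)+2=14$ forces at least two rows with $\Sigma=2$, a cost row is excluded because it would give $\Sigma\ge 2(n-2)+2(n-1)=4n-6$, so by the proofs of Lemmas \ref{lem:n-3_gift_zeros} and \ref{lem:2_rows_km} both rows are $km$--gift for the same $k\neq m$, and then Items \ref{item:2_km_k}, \ref{item:2_km_m} of Lemma \ref{lem:2_km_zeros} yield $\Sigma\ge 2(n-1)+2(n-2)=4n-6$, a contradiction; for $n=6$ the count $3n=18$ forces one row with $\Sigma=2$, and Lemmas \ref{lem:1_gift_row} and \ref{lem:2_gift_rows} (whose hypotheses already allow $n\ge 6$) place $(A,B)$ in some $\MM_{km}$, again forcing $\Sigma=4n-6$. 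Some such argument is required; as written, your proposal leaves $\Theta_5\ge 14$ and $\Theta_6\ge 18$ unproven.
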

\begin{proof}
If $n=2$ or $n \ge 7$, then the statement  follows from Lemma \ref{lem:necessity} and Item~\ref{item:MM_4n-6} of Corollary~\ref{cor:MM_km}.

Let $n=3$ and let $(A,B)$ be minimal. By Lemma~\ref{lem:two_zeros}, $\Sigma(i) \ge 2$ for all $i\in[n]$.
Then, using Corollary \ref{cor:Theta_estimation}, we get $6=2n \le \Sigma(A,B) = \Theta_3 \le 4n-6=6$, hence $\Theta_3 = 4n-6$.

Let $n=4$ and let $(A,B)$ be minimal. By Lemma~\ref{lem:two_zeros}, $\Sigma(i) \ge 2$ for all $i\in[n]$. In addition, by Example \ref{ex:four}, $\Theta_4 \le \Sigma(A_4,B_4) = 8$.
Then, $8=2n \le \Sigma(A,B) = \Theta_4 \le 8$, hence $\Theta_4 = 8$.

Let $n = 5$ and let $(A,B)$ be minimal. Suppose that $\Theta_5 = \Sigma(A,B) < 4n-6 = 14$.
Then by Lemma~\ref{lem:two_zeros}, $\Sigma(i) \ge 2$ for all $i\in[n]$.
Since $3(n-1) + 2\cdot1=14 > \Theta_5$, there exist at least two rows $p,p'\in[n]$ with $\Sigma(p) = \Sigma(p') = 2$.
Also there is no cost row in $C$ (indeed:  if  a cost row exists, then, by Item~\ref{item:2_duplicates} of Remark~\ref{rem:exclusive} and Lemma~\ref{lem:two_zeros}, we get $\Sigma(A,B) \ge 2(n-2) + 2(n-1) = 4n-6 > \Theta_5$, a contradiction). Hence $p$ and $p'$ are gift rows, by the proof of Lemma \ref{lem:n-3_gift_zeros}. Then, by the proof of Lemma \ref{lem:2_rows_km}, using $5n-11=6n-16=14 > \Theta_5$, we get that there exist $k,m\in[n]\setminus\{p,p'\}$ with $k\neq m$ such that $c_{qt}=\phi_{qt}^{km}$, for all $t \in[n]\setminus \{q,k,m\}$  and $q = p,p'$.
Then, by Items \ref{item:2_km_k}, \ref{item:2_km_m} of Lemma~\ref{lem:2_km_zeros} for row $p$,
and, by Lemma~\ref{lem:two_zeros}, we get $\Sigma(A,B) \ge 2(n-1)+2(n-2)=4n-6$, which is a contradiction with $\Sigma(A,B) < 4n-6$. Hence, $\Theta_5 \ge 4n - 6$, and Corollary \ref{cor:Theta_estimation} completes the proof.

Let $n = 6$ and let $(A,B)$ be minimal. Suppose that $\Theta_6 = \Sigma(A,B) < 4n-6 = 18$. By Lemma~\ref{lem:two_zeros}, $\Sigma(i) \ge 2$ for all $i\in[n]$. Since $3n = 18 > \Theta_6$, there exists at least one row $p\in[n]$ with $\Sigma(p) = 2$. Then, by Lemmas~\ref{lem:2_gift_rows}, \ref{lem:1_gift_row}, we get $A \neq B$ and $(A,B) \in \MM_{km}$, for some $k,m\in[n]$ with $k\neq m$. Hence $\Sigma(A,B) = 4n - 6$, by Item~\ref{item:MM_4n-6} of Corollary~\ref{cor:MM_km}, which is a contradiction with $\Sigma(A,B) < 4n-6$. Hence, $\Theta_6 \ge 4n - 6$, and Corollary \ref{cor:Theta_estimation} completes the proof.
\end{proof}

\begin{rem} Comparing $\Theta_n=\propa(C)=4n-6$ and $\gift(C)=(n-2)(n-3)$, we notice that $\propa(C)\le \gift(C)$ if and
only if $n\ge8$, the case $n=7$ giving $4n-6=22>20=(n-2)(n-3)$. Asymptotically, the ratio  $\gift(C)/\propa(C)$ is  $n/4$.
\end{rem}

\begin{thm}\label{thm:Theta}
Let $n=2$ or $n \ge 7$. Then the pair $(A,B)$ is minimal if and only if $A \neq B$ and $(A,B)\in \MM_{km}$ for some $k,m\in[n]$ with $k\neq m$.
\end{thm}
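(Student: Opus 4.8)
The plan is to prove the biconditional by treating its two implications separately, each of which has essentially been prepared by the preceding material; the theorem is an assembly rather than a fresh argument.

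For the necessity direction (that minimality of $(A,B)$ forces $A\neq B$ and $(A,B)\in\MM_{km}$ for some $k\neq m$), I would simply invoke Lemma~\ref{lem:necessity}, which states precisely this under the same hypothesis $n=2$ or $n\ge7$. Its proof is already complete: for $n=2$ the four minimal pairs are listed explicitly via Lemma~\ref{lem:n_2} and checked to lie in $\MM_{12}\cup\MM_{21}$, while for $n\ge7$ the conclusion is extracted from the chain Lemma~\ref{lem:0_gift_rows} (some row has $\Sigma(p)=2$), Lemma~\ref{lem:1_gift_row} (a second such row exists), and Lemma~\ref{lem:2_gift_rows} (two gift rows sharing the same pair $k,m$ pin the pair down to $\MM_{km}$). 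So this half requires no new work.

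For the sufficiency direction, suppose $A\neq B$ and $(A,B)\in\MM_{km}$ with $k\neq m$. I would apply Corollary~\ref{cor:MM_km}: Item~\ref{item:C_null} gives $C=Z_n$, so the pair is orthogonal, and Item~\ref{item:MM_4n-6} gives $\Sigma(A,B)=4n-6$. Since the hypothesis places us in the range $n=2$ or $n\ge7$, and in particular $n\neq4$, Corollary~\ref{cor:Theta} yields $\Theta_n=4n-6$. Combining these, $(A,B)$ is orthogonal with $\Sigma(A,B)=4n-6=\Theta_n$, which is exactly the defining condition for minimality in Definition~\ref{dfn:minimal_pair}. Hence $(A,B)$ is minimal, completing this direction.

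There is no genuine obstacle remaining, as all the combinatorial difficulty has been front-loaded into the preparatory lemmas and the two corollaries; the only point demanding care is the role of the hypothesis $n=2$ or $n\ge7$. This restriction does double duty: it is what makes Lemma~\ref{lem:necessity} applicable in the necessity direction, and it is what secures the clean optimal value $\Theta_n=4n-6$ from Corollary~\ref{cor:Theta} in the sufficiency direction (the value that degenerates at $n=4$). The theorem therefore packages the necessity result together with the fact that members of $\MM_{km}$ realize the optimum $4n-6$, and I would present the proof as the two short invocations described above.
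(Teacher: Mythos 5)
Your proposal is correct and follows essentially the same route as the paper: necessity is delegated to Lemma~\ref{lem:necessity}, and sufficiency combines Item~\ref{item:MM_4n-6} of Corollary~\ref{cor:MM_km} with the value $\Theta_n=4n-6$ from Corollary~\ref{cor:Theta}. The only cosmetic difference is that the paper re-enumerates the four minimal pairs explicitly for $n=2$, whereas you handle $n=2$ uniformly through Corollary~\ref{cor:Theta} (valid since $n\neq 4$), which is equally legitimate.
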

\begin{proof}
The necessity follows from Lemma \ref{lem:necessity}. Let us prove the sufficiency.

Assume, $n=2$. Then, by Item \ref{item:three} of Lemma \ref{lem:n_2}, using Notation \ref{nota:elementary}, we find all minimal pairs $(A,B)$: $(Z,I)$, $(I,Z)$, $(U_{12}, U_{21})$, $(U_{21}, U_{12})$. Using Notation \ref{nota:Vs}, we get that for $k \neq m$, the sets of $V(k;m)$--generic matrices and of $W(m;k)\cap Z(k;m)$--generic matrices are both equal to $\{U_{km}\}$, the set of $W(k;m)$--generic matrices equals $\{I\}$, and the set of $W(k;m)\cap Z(m;k)\cap Z(k;m)$--generic matrices equals $\{Z\}$. Hence, by Notation \ref{nota:MM_km}, if $A \neq B$ and $(A,B)\in \MM_{km}$ for some $k,m\in[2]$ with $k\neq m$, then $(A,B)$ is minimal.

Now let $n \ge 7$. Then $\Theta_n=4n - 6$, by Corollary \ref{cor:Theta}. If $A \neq B$ and $(A,B) \in \MM_{km}$, for some $k,m\in[n]$ with $k\neq m$, then, by Item~\ref{item:MM_4n-6} of Corollary~\ref{cor:MM_km}, we get $\Sigma(A,B) = 4n - 6 = \Theta_n$, hence $(A,B)$ is minimal.
\end{proof}

The following example shows that Theorem~\ref{thm:Theta} does not hold for $n=3,4,5,6$. It also shows that  few gift rows or no gift  rows is possible for $n\le 6$.
\begin{ex} \label{ex:four}
The following orthogonal pairs $(A_n,B_n)$, $n=3,4,5,6$, are minimal, but $(A_n,B_n)\notin \MM_{km}$ for all $k,m\in[n]$ with $k\neq m$. The minimality of the pairs follows from Corollary \ref{cor:Theta}.
$$A_3=\begin{bmatrix}
0 & - & -\\
- & 0 & -\\
- & - & 0
\end{bmatrix}, \
B_3=\begin{bmatrix}
0 & 0 & 0\\
0 & 0 & 0\\
0 & 0 & 0
\end{bmatrix}, \
C_3=\begin{bmatrix}
{\color{red}0} & {\color{blue}0} & {\color{blue}0}\\
{\color{blue}0} & {\color{red}0} & {\color{blue}0}\\
{\color{blue}0} & {\color{blue}0} & {\color{red}0}
\end{bmatrix}.$$
$$A_4=\begin{bmatrix}
0 & - & - & 0\\
- & 0 & 0 & -\\
- & 0 & 0 & -\\
0 & - & - & 0
\end{bmatrix}, \
B_4=\begin{bmatrix}
0 & - & 0 & -\\
- & 0 & - & 0\\
0 & - & 0 & -\\
- & 0 & - & 0
\end{bmatrix}, \
C_4=\begin{bmatrix}
{\color{red}0}& \phi_{12}^{43} & {\color{blue}0}& {\color{blue}0}\\
\phi_{21}^{34} & {\color{red}0}& {\color{blue}0}& {\color{blue}0}\\
{\color{blue}0}& {\color{blue}0}& {\color{red}0}&\phi_{34}^{21}\\
{\color{blue}0}& {\color{blue}0}& \phi_{43}^{12}&{\color{red}0}
\end{bmatrix}.$$

		$$A_5=\begin{bmatrix}
		0 & - & - & 0 & -\\
		- & 0 & 0 & - & 0\\
		- & 0 & 0 & - & 0\\
		0 & - & - & 0 & -\\
		- & 0 & 0 & - & 0
		\end{bmatrix}, \
		B_5=\begin{bmatrix}
		0 & - & 0 & - & -\\
		- & 0 & - & 0 & -\\
		0 & - & 0 & - & -\\
		- & 0 & - & 0 & 0\\
		- & - & - & 0 & 0
		\end{bmatrix}, \
		C_5=\begin{bmatrix}
		{\color{red}0}& \phi_{12}^{43} & {\color{blue}0}& {\color{blue}0} & \phi_{15}^{43}\\
		\phi_{21}^{34} & {\color{red}0}& {\color{blue}0}& {\color{blue}0} & {\color{blue}0}\\
		{\color{blue}0}& {\color{blue}0}& {\color{red}0}&\phi_{34}^{21} & {\color{blue}0}\\
		{\color{blue}0}& {\color{blue}0}& \phi_{43}^{12}&{\color{red}0} & {\color{blue}0}\\
		\phi_{51}^{34} & {\color{blue}0} & {\color{blue}0} & {\color{blue}0} & {\color{red}0}
		\end{bmatrix}.$$

		$$A_6=\begin{bmatrix}
		0 & - & - & 0 & - & -\\
		- & 0 & - & - & 0 & -\\
		- & - & 0 & - & - & 0\\
		0 & - & - & 0 & - & -\\
		- & 0 & - & - & 0 & -\\
		- & - & 0 & - & - & 0
		\end{bmatrix},\
		B_6=\begin{bmatrix}
		0 & - & - & - & 0 & 0\\
		- & 0 & - & 0 & - & 0\\
		- & - & 0 & 0 & 0 & -\\
		- & 0 & 0 & 0 & - & -\\
		0 & - & 0 & - & 0 & -\\
		0 & 0 & - & - & - & 0
		\end{bmatrix},\
		C_6= \begin{bmatrix}
		{\color{red}0}   & \phi_{12}^{45}   & \phi_{13}^{46}   & {\color{blue}0}  & {\color{blue}0}& {\color{blue}0}\\
		\phi_{21}^{54}   & {\color{red}0}   & \phi_{23}^{56}   & {\color{blue}0}& {\color{blue}0}  & {\color{blue}0}\\
		\phi_{31}^{64}   & \phi_{32}^{65}   & {\color{red}0}   & {\color{blue}0}& {\color{blue}0}& {\color{blue}0}\\
		{\color{blue}0}  & {\color{blue}0}& {\color{blue}0}& {\color{red}0}   & \phi_{45}^{12}   & \phi_{46}^{13}\\
		{\color{blue}0}& {\color{blue}0}  & {\color{blue}0}& \phi_{54}^{21}   & {\color{red}0}   & \phi_{56}^{23}\\
		{\color{blue}0}& {\color{blue}0}& {\color{blue}0}  & \phi_{64}^{31}   & \phi_{65}^{32}   & {\color{red}0}
		\end{bmatrix}.$$
\end{ex}

\subsection{Self--orthogonal matrices}\label{subsec:self_ortho}
If $A^2=Z_n$, then  $A$ is called \emph{self--orthogonal.}
Now we  set $A=B$   and let $C$ be the indicator matrix of the pair $(A,A)$.
Then, $a_{sk}=a_{kt}=0$ with $s,t,k\in[n]$ pairwise different and $a_{st}\neq0$ yield a cost zero $c_{st}=\phi_{st}^{kk}$.

Notice that $\Theta_n\le 2 \Theta_n^{\Delta}$ where $\Theta_n^{\Delta}$ is the minimum over the diagonal $\Delta$ of $M_n^N\times M_n^N$
\begin{equation}
\Theta_n^{\Delta}:=\min_{A\in M_n^N}\{\nu(A)-n : A^2=Z_n\}.
  \end{equation}
By Corollary \ref{cor:self_orth} and Lemma \ref{lem:nu}, we
know that  $\Theta_n^{\Delta} \le 2n - 2$.
The aim of this subsection is to prove that $\Theta_n^{\Delta}=2n-2$ and so $\Theta_n= 2\Theta_n^{\Delta}-2$ (this agrees with the
minimal values of $\nu(A)-n$ found  in Lemma \ref{lem:nu}).
The following theorem is an
analogue of Theorem \ref{thm:Theta} for the case $A = B$. Observe that gift zeros do not exist, as remarked
in Item~\ref{item:gift_dont_exist} of Remark~\ref{rem:exclusive}, and  we cannot use Lemmas \ref{lem:MM_km},
\ref{lem:2_km_zeros}, \ref{lem:2_rows_km},  \ref{lem:n-3_gift_zeros} and  \ref{lem:0_gift_rows}.
\begin{thm}\label{thm:self_orthogonal_minimal}
Let $n\ge 5$.
The matrix $A\in \norma n$ is self--orthogonal  with the minimal number of off--diagonal zeros $\Theta_n^{\Delta}$ if and only
if there exists $k\in [n]$ such that $A$ is $V(k;k)$--generic.
\end{thm}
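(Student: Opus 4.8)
The plan is to prove the evaluation $\Theta_n^{\Delta}=2n-2$ and the characterization simultaneously. The direction \emph{$V(k;k)$-generic $\Rightarrow$ minimal} is immediate: by Corollary~\ref{cor:self_orth} such an $A$ satisfies $A^2=Z_n$, and by the first item of Lemma~\ref{lem:nu} (with $p=q=k$) it has $\nu(A)-n=2n-2$ off-diagonal zeros; together with the inequality $\Theta_n^{\Delta}\le 2n-2$ noted above, it therefore suffices to prove that \emph{every} self-orthogonal $A$ has $\nu(A)-n\ge 2n-2$, with equality only for $V(k;k)$-generic matrices. Throughout I would use the elementary consequence of tropical multiplication: for $i\neq j$, $(A^2)_{ij}=\max_{s\in[n]}(a_{is}+a_{sj})=0$ holds if and only if there is an index $s$ with $a_{is}=a_{sj}=0$ (a \emph{witness}); recall that for $A=B$ no gift zeros occur, by Item~\ref{item:gift_dont_exist} of Remark~\ref{rem:exclusive}, so every off-diagonal zero is propagation or cost. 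Write $z_i:=\nur A i-1$ for the number of off-diagonal zeros in row $i$, so that $\nu(A)-n=\sum_i z_i$.

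First I would record two structural facts. (a) Every row contains an off-diagonal zero: if $\nur A i=1$ then row $i$ is zero only on the diagonal, so for any $t\neq i$ each term $a_{is}+a_{sj}$ in $(A^2)_{it}$ is at most $-1$, forcing $(A^2)_{it}\neq 0$; the statement for columns is symmetric. Hence $z_i\ge 1$ for all $i$, and if $z_i\ge 2$ for every $i$ then $\nu(A)-n\ge 2n>2n-2$, so such $A$ is not minimal. (b) Consequently a minimal $A$ has a row $i_0$ with $z_{i_0}=1$; let $k$ be the column of its unique off-diagonal zero, so $a_{i_0k}=0$. For each $t\neq i_0,k$ we have $a_{i_0t}=-1$, so any witness $s$ for $(A^2)_{i_0t}=0$ must satisfy $a_{i_0s}=0$, which forces $s=k$ and hence $a_{kt}=0$. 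Thus row $k$ is a \emph{hub}: $a_{kt}=0$ for all $t\neq i_0,k$, giving $z_k\ge n-2$.

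The argument then splits on $z_k$. If $z_k=n-1$ (row $k$ entirely zero off the diagonal), then $\nu(A)-n=(n-1)+\sum_{i\neq k}z_i\ge (n-1)+(n-1)=2n-2$, and equality forces $z_i=1$ for every $i\neq k$; applying the hub argument to each such row shows its single zero lies in a hub column, which (as $n-2>2$ for $n\ge5$ makes $k$ the only hub) must be $k$. Hence $a_{ik}=0$ for all $i$, so column $k$ is also all-zero and $A$ is exactly $V(k;k)$-generic. The remaining case $z_k=n-2$, i.e.\ $a_{ki_0}=-1$, is where the real work lies: I would show $\nu(A)-n\le 2n-2$ is then impossible. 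In that case $\sum_{i\neq k}z_i\le n$ while each of the $n-1$ rows $\neq k$ has $z_i\ge1$, so at most one of them has $z_i\ge 2$; thus at least $n-3\ge 2$ rows $i\neq k,i_0$ have $z_i=1$, and by the hub argument (again $k$ being the unique hub) each such zero sits in column $k$. Picking one such leaf row $i$, the only witness candidate for position $(i,i_0)$ is $s=k$, which fails because $a_{ki_0}=-1$; every other term in $(A^2)_{ii_0}$ is $\le -1$ as well, so $(A^2)_{ii_0}\neq0$, contradicting self-orthogonality.

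This contradiction rules out the case $z_k=n-2$ producing only $2n-2$ off-diagonal zeros, so every minimizer falls in the case $z_k=n-1$ and is $V(k;k)$-generic, completing both the evaluation $\Theta_n^{\Delta}=2n-2$ and the characterization. The main obstacle is exactly this hub-missing-one-entry case: one must recognize that a hub lacking the single entry $(k,i_0)$ cannot be patched within the zero budget, precisely because the many leaf rows pointing into the hub can no longer reach column $i_0$ in two tropical steps. The hypothesis $n\ge5$ is used twice — to ensure $n-2>2$, so that the hub row is the unique high-weight row and all leaf witnesses are pinned to $k$, and to guarantee a leaf row distinct from $k$, $i_0$ and the possible exceptional row, which supplies the final contradiction.
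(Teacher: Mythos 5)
Your proof is correct, and it differs from the paper's in how it pins down the full structure of the minimizer. Both arguments share the central mechanism: a row $i_0$ carrying a single off--diagonal zero, at column $k$, forces $a_{kt}=0$ for all $t\notin\{i_0,k\}$ (what the paper phrases via ``row $s$ of $C$ is $k$--cost''). The paper, however, first extracts \emph{two} such single--zero rows $s\neq s'$ (if $n-1$ rows had two or more off--diagonal zeros the count would already exceed $2n-2$), shows their hubs coincide by the estimate $3n-6>2n-2$, and then the union $([n]\setminus\{s,k\})\cup([n]\setminus\{s',k\})=[n]\setminus\{k\}$ yields the fully zero row $k$ in one stroke; the remaining count forces every other row to be a leaf pointing into column $k$. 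You instead work from a single leaf row, obtain only $z_k\ge n-2$, and must separately eliminate the deficient case $a_{ki_0}=-1$ by exhibiting a second leaf row $i$ for which $(A^2)_{ii_0}$ admits no tropical witness; this costs you the extra bookkeeping that at most one row other than $k$ can have more than one off--diagonal zero, and it is exactly where your use of $n\ge 5$ (so that $n-2>2$ pins all leaf witnesses to the unique hub $k$) is needed. Both routes are sound: the paper's two--leaf device avoids the case split entirely, while yours avoids the cost--zero formalism and argues directly from witnesses of the tropical product. A minor cosmetic point: in your $z_k=n-1$ branch the uniqueness of the hub only requires $n-2>1$, since there every row other than $k$ is already forced to have exactly one off--diagonal zero.
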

\begin{proof}
Let $A$ be self--orthogonal with minimal number of off--diagonal zeros, i.e.,
 $\nu(A) - n = \Theta_n^\Delta$.
By Lemma \ref{lem:two_zeros}, $\nur A i - 1 \ge 1$ for all $i \in [n]$.
Show that there exist at least two rows $s,s'$ with $\nur A s - 1 = 1$ and $\nur A {s'} - 1 = 1$ (indeed, if $\nur A i - 1 \ge 2$
for $n-1$ rows, then $\nu(A) - n \ge 2(n-1) + 1 = 2n - 1 > 2n - 2$, which contradicts with $\Theta_n^\Delta=\nu(A) - n \le 2n - 2$).
Let $a_{sk}=0$ for some $k \in [n] \setminus \{s\}$ and $a_{s'k'}=0$ for some $k' \in [n] \setminus \{s'\}$.
Since $\ro C s$ contains only one propagation zero, $C$ does not contain gift zeros, and $\ro C s$ is
zero then, by Items~\ref{item:exclusive} and \ref{item:cost_row} of Remark~\ref{rem:exclusive},
$\ro C s$ is $k$--cost and $c_{st}=\phi_{st}^{kk}$, for all $t \in [n] \setminus \{s,k\}$.
Similarly, $\ro C {s'}$ is $k'$--cost and $c_{s't}=\phi_{s't}^{k'k'}$, for all $t \in [n] \setminus \{s',k'\}$.
By Item \ref{item:cost_def} of Definition \ref{dfn:gift},  $a_{kt}=0$ for all $t \in [n] \setminus \{s,k\}$, and $a_{k't}=0$ for all
$t \in [n] \setminus \{s',k'\}$.
If $k \neq k'$, then $\nur A k - 1 \ge n-2$ and $\nur A {k'} - 1 \ge n-2$, whence using $\nur A i - 1 \ge 1$ for
$i \in [n] \setminus \{k,k'\}$ we get $\Theta_n^\Delta=\nu(A) - n \ge 2(n-2) + (n-2) = 3n - 6 > 2n - 2$, a contradiction. Thus,
$k=k'$ and $a_{kt}=0$ for all $t \in [n] \setminus \{k\}$.
Hence $\nur A k - 1 \ge n-1$.
Then $\nu(A) - n \ge (n-1) + (n-1) = 2n - 2$, hence $\nu(A) - n = 2n - 2$.
Then $\nur A k - 1 = n-1$ and $\nur A i - 1 = 1$ for all $i \in [n] \setminus \{k\}$.
Since for all $i \in [n] \setminus \{k\}$ $\ro C i$ contains only one propagation zero, similarly we get that $\ro C i$ is $k$--cost
and $c_{it}=\phi_{it}^{kk}$, for all $i \in [n] \setminus \{k\}$ and for all $t \in [n] \setminus \{i,k\}$, which completes the proof.

For the sufficiency, let the matrix $A$ be $V(k;k)$--generic. Then, by Corollary~\ref{cor:self_orth} and Lemma~\ref{lem:nu} $A$
is self--orthogonal and $\nu(A) - n = 2n - 2$. Using the proved necessity we get the desired result.
\end{proof}

The following example shows that  Theorem \ref{thm:self_orthogonal_minimal} is not true for $n=3$.
\begin{ex}
$A=\left[\begin{array}{rrr}  0&-1&0\\0&0&-1 \\-1&0&0\end{array}\right]$ is self--orthogonal,   by Example  \ref{ex:easy_1}, but
$A\not\in V(k;k)$,  for $k\in [3]$.
\end{ex}

\section{Orthogonality by bordering} \label{sec:bordering}
In this section we  study what happens with orthogonality after adding a row and a column, which  enables us to construct orthogonal
pairs of arbitrary sizes. We assume $n\ge2$.

Let the matrix  $A=
\left[\begin{array}{ccc}
B&v\\
w^T&0\\
\end{array} \right]\in\norma n$ be decomposed into blocks, with $B\in \norma {n-1}$,  and  $v,w$ non--positive vectors.

 \begin{prop}[Orthogonality by bordering]\label{lem:border}
Let $A_k=
\left[\begin{array}{ccc}
B_k&v_k\\
w_k^T&0\\
\end{array} \right]\in\norma n$  be as above, with $k=1,2$. If $B_1B_2=Z_{n-1}=B_2B_1$, then  $A_1A_2=Z_n=A_2A_1$  if and only if
$B_1v_2\oplus v_1=B_2v_1\oplus v_2$ and $w_1^T B_2\oplus w_2^T=w_2^T B_1\oplus w_1^T$ are zero vectors.
\end{prop}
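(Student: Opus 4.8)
The plan is to perform the tropical block multiplication of $A_1$ and $A_2$ explicitly, read off the four off-diagonal blocks, and then exploit the standing hypothesis $B_1B_2=Z_{n-1}=B_2B_1$ to show that the two diagonal blocks are forced to be correct no matter what $v_k,w_k$ are. The whole content of the statement then collapses to the vanishing of the off-diagonal blocks, and both implications of the ``if and only if'' are obtained at once.

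First I would write out the product in block form. With the convention that indices $1$ and $2$ on a block refer to the first $n-1$ coordinates and the last coordinate respectively, tropical multiplication gives
\[
A_1A_2=\begin{bmatrix} B_1B_2\oplus v_1w_2^T & B_1v_2\oplus v_1\\ w_1^TB_2\oplus w_2^T & w_1^Tv_2\oplus 0\end{bmatrix},
\]
where $v_1w_2^T$ is the tropical outer product with entries $(v_1)_i+(w_2)_j$. Here the top-right block arises as $B_1v_2\oplus(v_1\odot 0)=B_1v_2\oplus v_1$, the bottom-left block as $(w_1^TB_2)\oplus(0\odot w_2^T)=w_1^TB_2\oplus w_2^T$, and the corner scalar as $w_1^Tv_2\oplus(0\odot 0)$. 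The analogous expression for $A_2A_1$ is obtained by interchanging the indices $1$ and $2$ throughout.

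Next I would dispose of the diagonal blocks, which is the only step requiring a little care. Since $B_1B_2=Z_{n-1}$ is the all-zero matrix and every entry of $v_1w_2^T$ is a sum of two non-positive numbers, hence $\le 0$, the tropical sum satisfies $B_1B_2\oplus v_1w_2^T=Z_{n-1}\oplus v_1w_2^T=Z_{n-1}$; the all-zero block simply absorbs the non-positive outer product. Likewise $w_1^Tv_2\oplus 0=0$, because $w_1^Tv_2=\max_i\bigl((w_1)_i+(v_2)_i\bigr)\le 0$. Thus both diagonal blocks of $A_1A_2$ already coincide with the corresponding blocks of $Z_n$, independently of $v_k,w_k$, and the same conclusion holds for $A_2A_1$ using $B_2B_1=Z_{n-1}$.

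It then follows immediately that $A_1A_2=Z_n$ if and only if its two off-diagonal blocks vanish, namely $B_1v_2\oplus v_1=0$ and $w_1^TB_2\oplus w_2^T=0$; symmetrically, $A_2A_1=Z_n$ if and only if $B_2v_1\oplus v_2=0$ and $w_2^TB_1\oplus w_1^T=0$. Conjoining the two pairs of equalities is exactly the stated requirement that $B_1v_2\oplus v_1=B_2v_1\oplus v_2$ and $w_1^TB_2\oplus w_2^T=w_2^TB_1\oplus w_1^T$ be zero vectors, which settles both directions simultaneously. The main (mild) obstacle is precisely the absorption argument for the diagonal blocks; once that is secured, no separate ``only if'' reasoning is needed beyond reading off the blocks, since each product equals $Z_n$ exactly when its off-diagonal part does.
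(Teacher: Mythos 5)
Your proof is correct and takes essentially the same route as the paper, which simply states that ``easy computations'' yield the block form with diagonal blocks $Z_{n-1}$ and $0$ and declares the rest immediate. You have merely made explicit the absorption step ($Z_{n-1}\oplus v_1w_2^T=Z_{n-1}$ and $w_1^Tv_2\oplus 0=0$, both valid because all entries involved are non--positive) that the paper leaves to the reader.
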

\begin{proof} Easy computations show that
$$A_1A_2=
\left[\begin{array}{ccc}
Z_{n-1}& B_1v_2\oplus v_1\\
w_1^T B_2\oplus w_2^T&0\\
\end{array} \right],\
A_2A_1=
\left[\begin{array}{ccc}
Z_{n-1}& B_2v_1\oplus v_2\\
w_2^T B_1\oplus w_1^T&0\\
\end{array} \right].$$
The rest is immediate.
\end{proof}

\begin{nota}
 For $i,j\in[n]$ let $P^{ij}=[p_{kl}]$ be the \emph{permutation matrix} corresponding to the transposition $(ij)$, i.e.,  $p_{kl}= 0$ if $(k,l)=(i,j)$ or $(j,i)$ or $k=l\in[n]\setminus\{i,j\}$,
$p_{kl}=-1$, otherwise.\footnote{Properties
of $P^{ij}$ are analogous in classical and tropical linear algebra.
In general, $P^{ij}$ is not a normal matrix.}
\end{nota}

\begin{dfn}[Orthogonal set of  a matrix]\label{dfn:Or} For any subset $S\subseteq  M_n^N$, define the set
 $\Or(A)_S:=\{B\in S: AB=Z_n=BA\}$. Write $\Or(A)$ if $S$ is  the ambient                                                                  space.
\end{dfn}

\begin{lem}[Decreasing size] \label{lem:decreasing_size}
If$A \in\norma n$ and there exists $i\in[n]$ such that both the $i$--th row and the $i$--th column of $A$ have no zero entries except
on the main diagonal, then
$P^{ni}AP^{ni}=\left[\begin{array}{ccc}
B&v\\
w^T&0\\
\end{array} \right]$, with $v, w\in \R^{n-1}_{\le0}$ without zero entries, and
$$\Or(P^{ni}AP^{ni})=\left\{
\left[\begin{array}{ccc} D&Z_{(n-1) \times 1}\\ Z_{1 \times (n-1)}&0\\
\end{array} \right]:\
D \in \Or(B)
\right\}.$$
In particular, $|\Or(A)|=|\Or(B)|$. \qed
\end{lem}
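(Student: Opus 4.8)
The plan is to conjugate by the transposition $P^{ni}$ so that the distinguished row and column are moved to the last position, and then to exploit the strict negativity of the resulting border. First I would record that conjugation by $P^{ni}$ merely permutes rows and columns (swapping $n\leftrightarrow i$), hence maps $\norma n$ bijectively onto itself, preserves normality, fixes $Z_n$, and satisfies $(P^{ni})^2=I_n$ tropically (the properties of $P^{ij}$ being analogous to the classical ones, as noted). Since $AB=Z_n=BA$ is equivalent to $(P^{ni}AP^{ni})(P^{ni}BP^{ni})=Z_n=(P^{ni}BP^{ni})(P^{ni}AP^{ni})$, the map $B\mapsto P^{ni}BP^{ni}$ carries $\Or(A)$ bijectively onto $\Or(A')$, where $A':=P^{ni}AP^{ni}=\left[\begin{smallmatrix}B&v\\ w^T&0\end{smallmatrix}\right]$. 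The hypothesis that the $i$-th row and column of $A$ have no off-diagonal zeros translates into $v$ and $w$ having all entries equal to $-1$; in particular $v,w$ are strictly negative. It then suffices to prove $\Or(A')=\bigl\{\left[\begin{smallmatrix}D&0\\ 0&0\end{smallmatrix}\right]:D\in\Or(B)\bigr\}$, which yields both the displayed description and $|\Or(A)|=|\Or(A')|=|\Or(B)|$.

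Next I would take an arbitrary candidate $B'=\left[\begin{smallmatrix}D&x\\ y^T&0\end{smallmatrix}\right]\in\norma n$, with $D\in\norma{n-1}$ and $x,y\in\R^{n-1}_{\le 0}$, and expand the two products blockwise in tropical arithmetic:
$$A'B'=\begin{bmatrix} BD\oplus vy^T & Bx\oplus v\\ w^T D\oplus y^T & 0\end{bmatrix},\qquad B'A'=\begin{bmatrix} DB\oplus xw^T & Dv\oplus x\\ y^T B\oplus w^T & 0\end{bmatrix},$$
where the bottom-right entries equal $0$ because the defining maximum is attained at the last summand. Each of $A'B'=Z_n$ and $B'A'=Z_n$ thus splits into one block equation and two vector equations.

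The crux is that the strictly negative border forces the off-diagonal blocks of any orthogonal partner to vanish. Indeed, the top-right block of $B'A'$ reads $Dv\oplus x=0$; since each entry of $Dv$ is a maximum of sums $D_{ks}+v_s$ with $D_{ks}\le0$ and $v_s=-1$, the vector $Dv$ is strictly negative, so the equation forces $x=0$. Symmetrically, the bottom-left block of $A'B'$ gives $w^T D\oplus y^T=0$ with $w^T D$ strictly negative, forcing $y=0$. Once $x=y=0$, the rank-one corrections $vy^T$ and $xw^T$ become strictly negative, so the top-left blocks collapse and the orthogonality equations reduce exactly to $BD=Z_{n-1}=DB$, i.e. $D\in\Or(B)$; the remaining border equations $Bx\oplus v$ and $y^T B\oplus w^T$ become $0\oplus v$ and $0\oplus w^T$, which are zero because $B$ is normal (so $B\cdot 0=0$ and $0^T B=0$). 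Conversely, substituting $x=y=0$ and any $D\in\Or(B)$ into the block formulas above makes every block of $A'B'$ and of $B'A'$ equal to $Z_n$, the routine verification that completes the characterization.

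I expect the only delicate point to be this strict-negativity step, where one must argue cleanly that $Dv$ and $w^T D$ have all entries $<0$ using $D\le0$ together with $v,w<0$; everything else is block bookkeeping for tropical products and the elementary fact that a normal $B$ satisfies $B\cdot 0=0$. Notably, no case analysis on the placement of zeros is required, since the border carries no zeros at all — this is precisely what the hypothesis on the $i$-th row and column provides.
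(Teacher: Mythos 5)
Your proof is correct, and although it rests on the same block decomposition that the paper packages as Proposition~\ref{lem:border}, the decisive step is carried out along a genuinely different (and cleaner) route. The paper extracts from the top--right block of $A'B'$ the equation $Bx\oplus v=0$, deduces $Bx=0$ from $v<0$, and then argues via the monotonicity sandwich $x\le Bx\le Z_{n-1}x$; since $Z_{n-1}x$ is the constant vector with common value $\max_j x_j$, that sandwich by itself only shows that \emph{some} entry of $x$ vanishes, so the published step is under-justified as written. You instead exploit the top--right block of the \emph{other} product, $Dv\oplus x=0$, where $Dv$ is strictly negative because $D\le 0$ and $v$ is strictly negative; this kills $x$ entrywise in one stroke, and symmetrically $y$ via $w^TD\oplus y^T=0$. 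Your route also sidesteps a second small gap: Proposition~\ref{lem:border} takes $B_1B_2=Z_{n-1}=B_2B_1$ as a hypothesis, which the paper's proof never verifies before invoking the equivalence, whereas you recover $BD=Z_{n-1}=DB$ directly from the top--left blocks once $x=y=0$ makes $vy^T$ and $xw^T$ strictly negative (in fact $vy^T$ is already strictly negative before that, since $v<0$). In short: same skeleton and same conclusion, but your choice of which block equation to exploit is the more robust one and effectively repairs the argument in the text.
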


 \begin{proof}
 First, it is easy to check that $P^{ni}\Or(A)P^{ni}=\Or(P^{ni}AP^{ni})$. Now, set  $A_1=P^{ni}AP^{ni}$, $B_1=B$, $v_1=v$, $w_1=w$ in
 Lemma \ref{lem:border} and notice that, by hypothesis, the vectors $v_1$ and $w_1$ never vanish. If $A_1A_2=Z_n=A_2A_1$, then  the vector
 $B_1v_2\oplus v_1=B_1v_2$ is  zero, by  Lemma \ref{lem:border}. From normality  of $B_1$ and  monotonicity of $\odot$,  it follows that
 $I_{n-1}\le B_1\le Z_{n-1}$ and  $v_2\le B_1v_2\le Z_{n-1}v_2$. But the vector $Z_{n-1}v_2$ is constant, so it vanishes if and only
 if $v_2$  vanishes. Similarly, $w_2$ vanishes. Then the last row and column in $A_2$  are zero and, if we write $D=B_2$, we get the statement.
 \end{proof}

\begin{cor} \label{cor:strictly}
The only matrix orthogonal to a strictly normal matrix of order $n$ is $Z_n$.
\end{cor}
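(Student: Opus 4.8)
The plan is to prove the stronger statement $\Or(A)=\{Z_n\}$ for every strictly normal $A\in M_n^{SN}$ by induction on $n$, using the Decreasing size Lemma \ref{lem:decreasing_size} to strip off one index at a time. The base case $n=1$ is immediate: the only strictly normal matrix of order $1$ is $[0]=Z_1$, and Item \ref{item:n=1} of Lemma \ref{lem:n_2} says its sole orthogonal partner is $Z_1$.

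For the inductive step, let $A\in M_n^{SN}$ with $n\ge2$. Since every off-diagonal entry of $A$ is strictly negative, the $i$-th row and $i$-th column of $A$ have no zero entries off the diagonal for every $i$; in particular this holds for $i=1$, so Lemma \ref{lem:decreasing_size} applies. Writing $P^{n1}AP^{n1}=\left[\begin{array}{cc} B & v \\ w^T & 0 \end{array}\right]$ with $v,w$ having no zero entries, the lemma yields
$$\Or(P^{n1}AP^{n1})=\left\{\left[\begin{array}{cc} D & Z_{(n-1)\times 1} \\ Z_{1\times(n-1)} & 0 \end{array}\right] : D\in\Or(B)\right\}.$$
Conjugation by the permutation $P^{n1}$ merely relabels indices, so $B$ is again strictly normal, now of order $n-1$. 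By the induction hypothesis $\Or(B)=\{Z_{n-1}\}$, and substituting $D=Z_{n-1}$ collapses the right-hand side to $\{Z_n\}$. Since $P^{n1}Z_nP^{n1}=Z_n$ and, as recorded in the proof of Lemma \ref{lem:decreasing_size}, $P^{n1}\Or(A)P^{n1}=\Or(P^{n1}AP^{n1})$, we conclude $\Or(A)=\{Z_n\}$.

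The computation is mechanical, and the single point that must be verified is that strict normality is inherited by the principal sub-block $B$ obtained after moving the chosen row and column to the last position; this is exactly what feeds the induction, and I expect no genuine obstacle. As a shortcut specific to the semiring $R=\{0,-1\}$, one can instead observe that the only strictly normal matrix of order $n$ is the identity $I_n$, since every off-diagonal entry is forced to equal $-1$; then $AB=I_nB=B=Z_n$ forces $B=Z_n$ directly. The inductive argument is preferable only because it remains valid verbatim over the larger semiring $\R_{\le0}\cup\{-\infty\}$ mentioned in the introduction.
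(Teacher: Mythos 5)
Your main argument is correct and is essentially the paper's own proof: the paper also reduces the order one step at a time via Lemma \ref{lem:decreasing_size}, applied to the principal submatrices $A_j$ (it takes $i=j$ at each stage, so no conjugation by $P^{ni}$ is needed, whereas you relabel with $P^{n1}$; the two bookkeeping choices are equivalent), and concludes $|\Or(A_n)|=\cdots=|\Or(A_2)|=1$ together with $Z_n\in\Or(A)$. Your explicit check that strict normality passes to the sub-block, and your clean base case, are fine. The ``shortcut'' you mention at the end is also correct and worth highlighting as a genuinely different, one-line route: over $R=\{0,-1\}$ the only strictly normal matrix is $I_n$, which is the multiplicative identity on $M_n^N$, so $AB=Z_n$ reads $B=Z_n$ directly; the paper does not use this, presumably because (as you observe) the bordering argument survives verbatim over $\R_{\le0}\cup\{-\infty\}$, where strictly normal matrices form a large class.
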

\begin{proof}
Let $A\in M_n^{SN}$ and,  for $j\in [n]$, let $A_j$ denote the principal submatrix of $A$ of  order $j$. Then applying Lemma \ref{lem:decreasing_size} repeatedly for $j\in[n]$,  $i=j$, $A=A_j$ and $B=A_{j-1}$, we obtain that
$|\Or(A_n)|=|\Or(A_{n-1})|=\cdots=|\Or(A_2)|=1$.
Since $Z\in \Or(X) $ for any $X\in \norma n$, the result follows. \end{proof}

\begin{cor}[Self--orthogonality by bordering]\label{cor:self_ortho}
Let $A=
\left[\begin{array}{ccc}
B&v\\
w^T&0\\
\end{array} \right]\in\norma n$ be  decomposed into blocks, with $B\in \norma {n-1}$. If $B$ is self--orthogonal, then $A$ is self--orthogonal if and only if $Bv$ and $w^TB$ are zero vectors.
\end{cor}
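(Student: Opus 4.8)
The plan is to derive this corollary as the diagonal specialization of Proposition \ref{lem:border}. I would set $A_1=A_2=A$ in that proposition, so that $B_1=B_2=B$, $v_1=v_2=v$ and $w_1=w_2=w$. Under this substitution the hypothesis $B_1B_2=Z_{n-1}=B_2B_1$ becomes exactly $B^2=Z_{n-1}$, i.e.\ the self--orthogonality of $B$, while the conclusion $A_1A_2=Z_n=A_2A_1$ becomes $A^2=Z_n$, i.e.\ the self--orthogonality of $A$. Hence Proposition \ref{lem:border} at once yields the equivalence: $A$ is self--orthogonal if and only if $Bv\oplus v$ and $w^TB\oplus w^T$ are zero vectors.

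It then remains to simplify these vanishing conditions, and the point is that normality of $B$ collapses $Bv\oplus v$ to $Bv$ and $w^TB\oplus w^T$ to $w^TB$. Indeed, since $B\in\norma{n-1}$ has vanishing diagonal, the $i$--th entry $(Bv)_i=\max_{j}(b_{ij}+v_j)$ already includes the term $b_{ii}+v_i=v_i$, so $(Bv)_i\ge v_i$ and therefore $Bv\oplus v=Bv$; the identity $w^TB\oplus w^T=w^TB$ follows symmetrically from $b_{jj}=0$. Substituting these two identities into the equivalence above replaces ``$Bv\oplus v$ and $w^TB\oplus w^T$ are zero'' by the stated conditions ``$Bv$ and $w^TB$ are zero,'' which finishes the argument.

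The proof is essentially mechanical once Proposition \ref{lem:border} is available, so I do not expect a genuine obstacle---only the bookkeeping of the specialization. The single step that actually uses the semiring structure is the reduction $Bv\oplus v=Bv$ (and the corresponding identity for $w^TB$), which rests precisely on the diagonal entries of $B$ being $0$. I would be careful to state this reduction explicitly, since it is exactly what turns the somewhat cumbersome condition produced by the proposition into the clean statement of the corollary.
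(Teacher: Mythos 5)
Your proof is correct and is essentially the paper's argument: the paper computes $A^2$ in block form directly (obtaining $B^2\oplus vw^T$, $Bv\oplus v$, $w^TB\oplus w^T$) rather than formally citing Proposition~\ref{lem:border} with $A_1=A_2$, but the computation is the same one that proves that proposition, and the decisive simplification $Bv\oplus v=(B\oplus I)v=Bv$ from normality is exactly the step you isolate. The only detail your packaging quietly delegates to the proposition is the observation that $B^2\oplus vw^T=Z\oplus vw^T=Z$ because $vw^T$ is non-positive, which the paper spells out explicitly.
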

\begin{proof}
We have $A^2=\left[\begin{array}{ccc}
B^2\oplus vw^T&Bv\oplus v\\
w^TB\oplus w^T&0\\
\end{array} \right]$, where the matrix $vw^T$ is non--positive, but not necessarily  normal.
By hypothesis, we have $B^2=Z$, whence $ B^2\oplus vw^T= Z\oplus vw^T=Z$ and   $Bv\oplus v=(B\oplus I)v=Bv$.
\end{proof}

\section{ Three  orthogonality graphs}\label{sec:graphs}

In this section  we  compute the diameter and girth  of three  types of
graphs related to the
orthogonality relation.  Graphs can have loops  but no  multiple edges. We assume $n\ge3$ since otherwise the graphs under
consideration are disconnected and more or less trivial. In the first and most intuitive graph,  denoted $\ORTHO$, vertices are matrices and an edge
between two matrices $A,B$ means that $A,B$ are mutually orthogonal. A loop stands for a self--orthogonal matrix.

\medskip
Let $\Gamma=(V,E)$ be a graph with the vertex set $V$ and edges $E\subseteq V\times V$. We consider three different sets of vertices.

\begin{dfn}
A {\em path} (or {\em walk)\/} is a sequence  $v_0, e_1, v_1, e_2, v_2, \ldots , e_k, v_k$ of vertices $v_0,\ldots, v_k\in V$ and edges $e_1,\ldots, e_k\in E$
where $e_i=(v_{i-1},v_i)$ for all $i=1,\ldots,k$.
If $v_0=v_k$, then the path is {\em closed\/}.
The {\em length of  the former path\/} is $k$.
 A path is {\em elementary\/} if all the edges are distinct.
 A {\em cycle\/} is a closed elementary path.
\end{dfn}
\begin{dfn}
The {\em girth\/} of a graph $\Gamma$ is the length of the shortest cycle in $\Gamma$ which is not a loop.
\end{dfn}
\begin{dfn} The graph $\Gamma$ is said to be {\em connected\/}
if it is possible to establish a path from any vertex to any other vertex of $\Gamma$.
\end{dfn}
\begin{dfn}
The {\em distance $\dist(u,v)$\/} between two vertices~$u$ and~$v$ in a~graph $\Gamma$ is the length of the shortest path between them.
If~$u$ and~$v$ are unreachable from each other, then we set $\dist(u,v)=\infty$. It is assumed that $\dist(u,u)=0$ for any vertex $u$.
\end{dfn}
\begin{dfn}
The {\em diameter $\diam(\Gamma)$\/} of a graph $\Gamma$ is the maximum of distances between vertices, for all pairs of vertices in $\Gamma$.
\end{dfn}

Recall that the  absorbing property (\ref{eqn:absorbing})  in Remark \ref{rem:absorbing} says that $Z$ is orthogonal to every
matrix. On the other hand, if $A$ is
strictly normal, then the only
matrix orthogonal to $A$ is $Z$, by Corollary  \ref{cor:strictly}.
So, it is reasonable to consider
$$(\norma n)^*:=\norma n\setminus\left( \snorma n\cup\{Z\}\right)$$
as a set of vertices. Namely, we delete the vertex which is connected with all other vertices  as well as the set
of isolated vertices.

Recall Notations \ref{nota:Vs}. In view of Corollary \ref{cor:suff_conditions}, other interesting sets of matrices
 are $$VNL:=\bigcup_{p,q\in [n],\, p \neq q}V(p;q),\qquad VNL^*=VNL\setminus \{Z\},$$
$$WNL:=\bigcup_{p,q\in [n],\, p \neq q}W(p;q),\qquad WNL^*=WNL\setminus \{Z\}.$$

Now we define the corresponding graphs.

\begin{dfn}\label{dfn:ORTHO}  The vertex set of the graph $\ORTHO$ is $(\norma n)^*$.
 Matrices $A, B\in (\norma n)^*$ are joined by an edge in  $\ORTHO$ if and only if
$AB=Z=BA$. In particular, loops in $\ORTHO$  correspond to self--orthogonal matrices.
\end{dfn}

Lemma \ref{lem:starting_point} is  motivation for the following Definition.
\begin{dfn}\label{dfn:VNL}The vertex set of the graph $\VNL$ is $VNL^*$.
Matrices $A, B\in VNL^*$ are  joined by an edge in  $\VNL$ if and only if  there exist $p,q\in[n]$
with $A\in V(p;q)$ and $B\in V(q;p)$.
\end{dfn}

\begin{dfn}\label{dfn:WNL}The vertex set of the graph $\WNL$ is $WNL^*$.
Matrices $A, B\in WNL^*$ are  joined by an edge in  $\WNL$ if and only if  there exist $k,m\in[n]$
with $A$ and $B$ satisfying one of the conditions $1, 2, 3$ in Corollary \ref{cor:suff_conditions}.
\end{dfn}

\begin{rem}
Notice that $VNL^*, WNL^* \subseteq (\norma n)^*$ and, by Corollary \ref{cor:suff_conditions}, the graphs $\VNL, \WNL$ are subgraphs of  $\ORTHO$.
\end{rem}

\begin{prop} \label{prop:girth}
	Let $n \ge 3$. Then $\girth(\ORTHO)=\girth(\VNL)=3$.
\end{prop}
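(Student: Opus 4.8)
The plan is to reduce both equalities to the exhibition of a single triangle. First I would note that the girth is bounded below by $3$ for free: since neither graph has multiple edges, a non-loop cycle cannot have length $1$ (that is a loop, excluded by the definition of girth) nor length $2$ (that would require two distinct edges joining the same unordered pair of vertices). Hence every non-loop cycle has length at least $3$, so it suffices to produce one cycle of length exactly $3$. Moreover, since $\VNL$ is a subgraph of $\ORTHO$, one triangle living inside $\VNL$ simultaneously certifies $\girth(\VNL)\le 3$ and $\girth(\ORTHO)\le 3$. Thus the whole statement collapses to finding three distinct, pairwise $\VNL$--adjacent vertices.

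For the construction I would use the elementary matrices of Notation~\ref{nota:elementary}. Because $n\ge 3$, the indices $1,2,3$ are available, and I set $A_1:=E_{21}$, $A_2:=E_{32}$ and $A_3:=E_{13}$. Each $E_{ij}$ carries a single off--diagonal $-1$, so it is normal but not strictly normal and is different from $Z$; the three matrices are pairwise distinct and lie in $VNL^*$. The key elementary observation, immediate from Notation~\ref{nota:Vs}, is that $E_{ij}$ (all of whose entries vanish except the $(i,j)$ one) belongs to $V(p;q)$ for every $p\neq i$ and every $q\neq j$, since then its $p$--th row and its $q$--th column are identically zero. In particular $E_{21}\in V(1;2)\cap V(1;3)$, $E_{32}\in V(2;1)\cap V(2;3)$ and $E_{13}\in V(3;1)\cap V(3;2)$.

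It then remains to read off the three edges from Definition~\ref{dfn:VNL}. The pair $(A_1,A_2)$ is joined via $(p,q)=(1,2)$, because $E_{21}\in V(1;2)$ and $E_{32}\in V(2;1)$; the pair $(A_2,A_3)$ is joined via $(2,3)$; and the pair $(A_3,A_1)$ is joined via $(3,1)$. Each such adjacency is indeed a mutual--orthogonality edge by Lemma~\ref{lem:starting_point}, consistent with $\VNL$ being a subgraph of $\ORTHO$. This yields a genuine $3$--cycle $A_1,A_2,A_3,A_1$ in $\VNL$, hence also in $\ORTHO$, and combined with the lower bound gives $\girth(\ORTHO)=\girth(\VNL)=3$.

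The only real obstacle is making the construction uniform in $n$, and in particular handling $n=3$. The tempting shortcut of choosing three matrices inside a single intersection $V(p;q)\cap V(q;p)$ fails there, since for $n=3$ that intersection degenerates to $\{Z\}$ and cannot supply three distinct non--$Z$ vertices. Selecting instead the cyclic triple of index pairs $(1,2),(2,3),(3,1)$---equivalently, the three single--entry matrices $E_{21},E_{32},E_{13}$---sidesteps this degeneracy and works verbatim for every $n\ge 3$.
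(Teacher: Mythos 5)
Your proof is correct and follows essentially the same strategy as the paper: both arguments note that the absence of multiple edges forces the girth to be at least $3$, and then exhibit an explicit triangle inside $\VNL$ built from the cyclic triple of index pairs on $\{1,2,3\}$ (the paper takes generic elements of $V(1,2;3)$, $V(2,3;1)$, $V(3,1;2)$, while you use the single-entry matrices $E_{21},E_{32},E_{13}$, which are equally valid witnesses). Your additional care about distinctness and membership in $VNL^*$ is a minor refinement, not a different method.
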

\begin{proof}
Recall Notations \ref{nota:Vs} and take $A_1\in V(1,2;3)$, $A_2\in V(2,3;1)$ and  $A_3\in V(3,1;2)$. Then $A_1-A_2-A_3-A_1$ make a cycle in $\VNL$,
so $\girth(\ORTHO)\le \girth(\VNL)\le3$. Since the  graphs under consideration have no multiple edges, the proof is complete.
\end{proof}

\begin{prop} \label{prop:diam}
Let $n \ge 3$. Then $\ORTHO$ is connected and $\diam(\ORTHO)=3$.
\end{prop}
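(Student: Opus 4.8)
The plan is to exhibit, for any two vertices $A,B\in(\norma n)^*$, a path of length at most $3$ connecting them, and then to produce a specific pair of vertices whose distance is exactly $3$, ruling out diameter $2$. The absorbing matrix $Z$ has been removed from the vertex set precisely because it was adjacent to everything; the task is to show that even without this universal hub, the graph stays tightly connected.

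First I would establish the upper bound $\diam(\ORTHO)\le 3$. The natural idea is to route any vertex to a ``standard'' orthogonal neighbour. Given $A\in(\norma n)^*$, since $A$ is normal but not strictly normal, $A$ has an off-diagonal zero, say $a_{pq}=0$ with $p\neq q$. I would look for a vertex $X_A$ orthogonal to $A$ that lies in some canonical family, using Lemma~\ref{lem:starting_point}: if I can find $X_A\in V(q;p)$ with $A\in V(p;q)$, then $A$ and $X_A$ are adjacent. The cleanest route is to connect every vertex to a small set of ``anchor'' matrices (matrices of the form $U_{ij}$ or matrices in various $V(p;q)$) and then to show any two anchors are within distance $1$ or $2$ of each other; composing gives distance at most $3$ between arbitrary $A$ and $B$. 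Concretely, for $A$ with $a_{pq}=0$ one shows $A$ is orthogonal to a matrix $M$ built to lie in $V(q;p)$, and symmetrically $B$ is orthogonal to some $M'$; then one connects $M$ to $M'$ in at most one further step. Verifying adjacency at each stage is a direct application of Lemma~\ref{lem:starting_point} and Corollary~\ref{cor:suff_conditions}, so these are routine once the anchors are chosen.

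Next I would prove the lower bound $\diam(\ORTHO)\ge 3$ by displaying two vertices at distance exactly $3$, i.e.\ a pair $A,B$ that are not adjacent and share no common neighbour. A good candidate is a pair of matrices whose zero patterns are ``incompatible'' for direct orthogonality and also cannot both be orthogonalised by a single third matrix. I would take $A$ and $B$ each having exactly one carefully placed off-diagonal zero (so each is nearly strictly normal), chosen so that the orthogonality product $AB$ fails to be $Z_n$ (non-adjacency), and then argue that any common neighbour $X$ would be forced to be orthogonal to two nearly-strictly-normal matrices simultaneously; Corollary~\ref{cor:strictly} says the only matrix orthogonal to a strictly normal matrix is $Z_n$, and a perturbation of that argument should show no admissible $X\neq Z_n$ in $(\norma n)^*$ can serve both $A$ and $B$. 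This yields $\dist(A,B)\ge 3$, and combined with the upper bound gives exactly $3$.

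The main obstacle will be the lower bound: I must choose $A,B$ so that \emph{no} common neighbour exists within $(\norma n)^*$, and proving non-existence of a common neighbour requires understanding the full orthogonal set $\Or(A)$ for the chosen $A$, not just one witness. The tools are Lemma~\ref{lem:decreasing_size} and Corollary~\ref{cor:strictly}, which control $\Or(A)$ when $A$ has rows/columns that are zero-free off the diagonal; the delicate point is selecting $A,B$ ``sparse enough'' that their orthogonal sets are small and provably disjoint inside $(\norma n)^*$, yet still nonempty so that a length-$3$ path does exist. I would handle the small case $n=3$ separately if the generic construction degenerates, and I expect the clean statement of $\Or$ via Lemma~\ref{lem:decreasing_size} to be what makes the disjointness argument tractable.
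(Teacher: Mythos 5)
Your lower bound is essentially the paper's own argument: the right witnesses are $U_{ij}$ and $U_{km}$ with $(i,j)\neq(k,m)$ (each with exactly one off--diagonal zero), one checks that $U_{ij}U_{km}\neq Z$, and the analysis of orthogonal sets shows $\Or(U_{ij})\cap(\norma n)^*=\{E_{ij}\}$, so the two orthogonal sets are disjoint singletons and no common neighbour exists. That half of your plan goes through.

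The gap is in the upper bound: you never actually produce a neighbour for an \emph{arbitrary} vertex. The adjacency criteria you invoke (Lemma~\ref{lem:starting_point} and Corollary~\ref{cor:suff_conditions}) require $A$ itself to lie in some $V(p;q)$ or $W(p;q)$, i.e.\ to possess an (almost) entire zero row and zero column; a generic vertex --- for instance $U_{pq}$ itself --- satisfies no such condition, so these lemmas yield no neighbour for it. Your proposed anchors are also the wrong family: $U_{ij}$ has exactly one neighbour in $(\norma n)^*$, namely $E_{ij}$, so it cannot serve as a hub, and by Lemma~\ref{lem:kind_of_converse} a $V(q;p)$--generic matrix is orthogonal to $A$ only under strong extra conditions on $A$. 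The missing ingredient is Corollary~\ref{cor:propagation}: since $A$ is not strictly normal it has some $a_{pq}=0$ with $p\neq q$, whence $A\oplus E_{pq}=Z$ and $A$ is orthogonal to $E_{pq}$; likewise $B$ is orthogonal to some $E_{km}$, and $E_{pq}\oplus E_{km}=Z$ for $(p,q)\neq(k,m)$ makes the two anchors adjacent. The correct hubs are the near--zero matrices $E_{ij}$, not the near--$(-1)$ matrices $U_{ij}$, and the path is $A-E_{pq}-E_{km}-B$. Without this propagation step your upper bound does not close.
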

\begin{proof}
Let $A, B$ be matrices corresponding to
distinct vertices in the graph $\ORTHO$.
Each of them has at least one off--diagonal zero. Assume   $a_{ij}=0=b_{km}$, with $i\neq j$ and $k\neq m$.  We can assume that $(i,j)\neq (k,m)$.
Then, by Notations \ref{nota:elementary} and Corollary \ref{cor:propagation}, $A \in \Or(E_{ij})_{(\norma n)^*}$, $B \in \Or(E_{km})_{(\norma n)^*}$, and also
$E_{km} \in \Or(E_{ij})_{(\norma n)^*}$, since $(i,j)\neq (k,m)$.
So the path $A-(E_{ij})-(E_{km})-B$ shows that $\diam(\ORTHO) \le 3$.

Now, consider the matrices $U_{ij}, U_{km}$, with $i,j\in [n]$, $i\neq j$,  $k,m\in [n]$, $k\neq m$ and $(i,j)\neq (k,m)$.
Since $n \ge 3$, there exists $l \in [n] \setminus \{i, k\}$, and   the $l$--th row of the product  $U_{ij} U_{km}$ is non--zero. Hence,
$U_{ij}$ and $U_{km}$ are not orthogonal and
$\dist(U_{ij}, U_{km}) > 2$, which completes the proof.
\end{proof}

\begin{prop} \label{prop:diam_nl}
Let $n \ge 3$. Then $\diam(\VNL)=2$.
\end{prop}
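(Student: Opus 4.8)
The plan is to prove three things: that $\VNL$ is connected, that $\dist(A,B)\le 2$ for every pair of vertices, and that some pair of vertices is at distance exactly $2$; together these yield $\diam(\VNL)=2$. The engine for the upper bound is a common--neighbour construction. Given vertices $A,B\in VNL^*$, I would pick $p,q\in[n]$ with $p\neq q$ and $A\in V(p;q)$, and $p',q'\in[n]$ with $p'\neq q'$ and $B\in V(p';q')$; these exist by the definition of $VNL$. Since $A\in V(p;q)$, any $X\in V(q;p)$ is adjacent to $A$ by Definition~\ref{dfn:VNL}, and likewise any $X\in V(q';p')$ is adjacent to $B$. The natural candidate for a common neighbour is therefore the $V(q,q';p,p')$--generic matrix $X$, i.e.\ the matrix whose rows $q,q'$ and columns $p,p'$ vanish and whose other off--diagonal entries equal $-1$.

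For $X$ to serve as a common neighbour I need $X\in VNL$ and $X\neq Z$. Membership in $VNL$ is automatic, since $X\in V(q;p)$ with $q\neq p$. The condition $X\neq Z$ holds precisely when some off--diagonal position $(i,j)$ survives, that is $i\in[n]\setminus\{q,q'\}$ and $j\in[n]\setminus\{p,p'\}$ with $i\neq j$. The free rows $[n]\setminus\{q,q'\}$ and free columns $[n]\setminus\{p,p'\}$ each have size at least $n-2$, so for $n\ge4$ both have at least two elements and an off--diagonal free position always exists, giving $X\neq Z$ and hence $\dist(A,B)\le2$. The construction can fail only for $n=3$ when $q\neq q'$, $p\neq p'$, and the unique free row coincides with the unique free column; this forces $\{q,q'\}=\{p,p'\}$, and since $p\neq q$ and $p'\neq q'$ one deduces $q=p'$ and $q'=p$. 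But then $A\in V(p;q)$ and $B\in V(q;p)$, so $A$ and $B$ are directly adjacent and $\dist(A,B)=1$. Thus $\dist(A,B)\le2$ in every case, which simultaneously proves connectedness.

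For the lower bound I would exhibit two vertices at distance exactly $2$. Take $A$ to be the $V(1;2)$--generic matrix and $B$ the $V(1;3)$--generic matrix; both are defined since $n\ge3$, both lie in $VNL^*$, and $A\neq B$ because, for instance, their $(3,2)$ entries differ. Each of these matrices has a unique zero row and a unique zero column, so $A$ lies in no set $V(p;q)$ other than $V(1;2)$, and $B$ in none other than $V(1;3)$. Adjacency of $A$ and $B$ would then force $B\in V(2;1)$, i.e.\ row $2$ and column $1$ of $B$ to vanish, which they do not. Hence $A$ and $B$ are non--adjacent, and combined with the upper bound this gives $\dist(A,B)=2$, so $\diam(\VNL)\ge2$.

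The main obstacle is the degenerate small--$n$ situation in the upper bound, where the canonical common neighbour collapses onto the excluded vertex $Z$. The resolution is the bookkeeping observation that this collapse can occur only when $\{q,q'\}=\{p,p'\}$, which in turn forces $q=p'$ and $q'=p$ and hence direct adjacency of $A$ and $B$; so the degenerate case costs nothing. Everything else reduces to a routine count of the surviving off--diagonal entries in the free rows and columns.
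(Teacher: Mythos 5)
Your proof is correct and follows essentially the same route as the paper: both establish the upper bound by exhibiting the $V(q,q';p,p')$--generic matrix as a common neighbour (handling the degenerate case where it collapses to $Z$ by observing that $A$ and $B$ are then directly adjacent), and both obtain the lower bound from a non--adjacent pair of $V$--generic matrices. The only cosmetic difference is that you verify the common neighbour is non--zero by locating a surviving off--diagonal position, whereas the paper counts its zeros and compares with $n^2$.
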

\begin{proof}
Let $A, B$ be matrices corresponding to two distinct
vertices in the graph $\VNL$.
If   for some $p,q\in [n]$  both $A, B \in V(p;q)$ then we have a path $A - C - B$, where $C \in V(q;p)$ is non--zero.
Now assume $A \in V(p;q), p \neq q$, $B \in V(c;d), c \neq d$, and $p \neq c$.
If $|\{p, q, c, d\}| = 2$, then $(p, q) = (d, c)$ and we have a path $A - B$.
Otherwise the set $\{p, q, c, d\}$ contains at least $3$ different numbers.
Consider the set $S = V(q;p) \cap V(d;c)$ and a $S$--generic matrix $C$. We show  that $C \neq Z$.
Indeed, in $C$ the $q$--th and the $d$--th rows are zero, and also the $p$--th and the $c$--th columns are zero.
The number of zeros in $C$ satisfies   $\nu(C) \le 2n + 2(n - 2) + n = 5n-4$.
But since $|\{p, q, c, d\}| \ge 3$, then at least $3$ diagonal zeros (among $c_{qq}, c_{pp}, c_{dd}, c_{cc}$)
intersect with zero rows and columns of $C$. So $\nu(C) \le 5n - 4 - 3 = 5n - 7$. It is clear  that $5n - 7 < n^2$, so $C$ is not the zero matrix.
So we have a path $A-C-B$, by Definition~\ref{dfn:VNL}. We have shown that $\diam(\VNL) \le 2$.

Now, consider a  $V(i;j)$--generic matrix $L$, $i,j\in [n]$, $i\neq j$, and a  $V(k;m)$--generic matrix $M$, $k,m\in [n]$, $k\neq m$, $(i,j)\neq (k,m)$, $(i,j)\neq (m,k)$. Then, by Definition~\ref{dfn:VNL}, $L$ and $M$ are not joined by an edge in $\VNL$ and
$\dist(L, M) > 1$, which completes the proof.
\end{proof}

\begin{prop} \label{prop:diam_wnl}
If $n \ge 4$, then $\diam(\WNL)=2$.
\end{prop}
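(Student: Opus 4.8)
The plan is to establish the two inequalities $\diam(\WNL)\le 2$ and $\diam(\WNL)\ge 2$; the first also yields connectedness, so that together they give $\diam(\WNL)=2$.

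For the upper bound, I would take two distinct vertices and fix representations $A\in W(p;q)$, $B\in W(c;d)$ with $p\neq q$ and $c\neq d$. Since $A$ and $B$ are arbitrary, the corner entries $a_{pq},a_{qp},b_{cd},b_{dc}$ are beyond our control, so I would route both edges through condition~\ref{item:c} of Corollary~\ref{cor:suff_conditions}, in which all prescribed corner zeros sit on the \emph{second} matrix. Explicitly, for any $A\in W(p;q)$ a matrix $C$ whose row $q$ and column $p$ vanish and with $c_{pq}=0$ satisfies $C\in W(q;p)\cap Z(p;q)\cap Z(q;p)$, so the pair $(A,C)$ meets condition~\ref{item:c} with $(k,m)=(p,q)$ and is orthogonal; symmetrically, row $d$ and column $c$ zero together with $c_{cd}=0$ make $(B,C)$ orthogonal via condition~\ref{item:c} with $(k,m)=(c,d)$. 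Accordingly I would let $C$ be the matrix whose zero entries are exactly the main diagonal, rows $q$ and $d$, columns $p$ and $c$, and the two positions $(p,q)$, $(c,d)$, with $-1$ everywhere else (a $V(q;p)\cap V(d;c)$--generic matrix subject to two extra zeros, in the sense of Definition~\ref{dfn:generic}). It then remains to check $C\neq Z$, so that $C\in WNL^{*}$ is a genuine vertex and $A-C-B$ witnesses $\dist(A,B)\le2$. Counting cells, two full rows and two full columns cover at most $4n-4$ entries, the diagonal adds at most $n-4$ further entries, and the two isolated positions add $2$, so $\nu(C)\le 5n-6$. As $n^{2}-(5n-6)=(n-2)(n-3)>0$ for $n\ge4$, we get $\nu(C)<n^{2}$ and hence $C\neq Z$; any coincidence among $p,q,c,d$ only merges zero rows or columns and thus only lowers $\nu(C)$, so the estimate is safe in every case.

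For the lower bound I would exhibit a single non-adjacent pair, and here the hypothesis $n\ge4$ is essential. If $A$ is $W(p;q)$--generic, then every row other than the $p$-th and every column other than the $q$-th contains at least $n-2\ge2$ entries equal to $-1$, so $A$ lies in $W(k;m)$ for the single pair $(k,m)=(p,q)$. By Definition~\ref{dfn:WNL} and Corollary~\ref{cor:suff_conditions}, an edge forces one endpoint into some $W(k;m)$ and the other into $W(m;k)$; hence the $W(1;2)$--generic matrix and the $W(1;3)$--generic matrix can be adjacent only if the latter lies in $W(2;1)$, which is impossible since it lies in $W(1;3)$ alone and $(1,3)\neq(2,1)$. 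These are distinct vertices at distance at least $2$, so $\diam(\WNL)=2$.

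The main obstacle I anticipate is bookkeeping rather than ideas: verifying that condition~\ref{item:c} (rather than the weaker $V$--condition of Lemma~\ref{lem:starting_point}) is exactly what makes $C$ orthogonal to an arbitrary endpoint, and confirming that no coincidence among $p,q,c,d$ inflates $\nu(C)$ up to $n^{2}$. The genuinely load-bearing point, valid precisely when $n\ge4$, is the uniqueness of the set $W(k;m)$ containing a generic matrix; it deserves a careful statement.
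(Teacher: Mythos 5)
Your proof is correct and follows essentially the same route as the paper: the same intermediate vertex, namely a generic element of $V(q;p)\cap Z(p;q)\cap V(d;c)\cap Z(c;d)$ reached via condition~3 of Corollary~\ref{cor:suff_conditions}, the same counting argument showing this element is not $Z$, and the same kind of witness ($W$--generic matrices for distinct index pairs) for the lower bound. The only difference is cosmetic: you treat all coincidences among $p,q,c,d$ uniformly through the bound $\nu(C)\le 5n-6$, where the paper splits into explicit sub-cases, and you spell out the uniqueness of the $W(k;m)$ containing a generic matrix, which the paper leaves implicit.
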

\begin{proof}
Let $A, B$ be matrices corresponding to two distinct arbitrary vertices in the graph $\WNL$.
If,  for some $p,q\in [n]$   both $A, B \in W(p;q)$ then we have a path $A - C - B$, where $C \in W(q;p) \cap Z(q;p) \cap Z(p;q)$ is non--zero.
Assume $A \in W(p;q), p \neq q$, $B \in W(c;d), c \neq d$, and $p \neq c$.
If $|\{p, q, c, d\}| = 2$, then $(p, q) = (d, c)$ and we have a path $A - C - B$, where $C \in V(p;q) \cap V(q;p)$ is non--zero.
Otherwise the set $\{p, q, c, d\}$ contains at least $3$ different numbers.
Consider the set $S = V(q;p) \cap Z(p;q) \cap V(d;c) \cap Z(c;d)$ and a $S$--generic matrix $C$. We show  that $C \neq Z$.
Indeed, in $C$ the $q$--th and the $d$--th rows are zero, and also the $p$--th and the $c$--th columns are zero, besides $c_{pq} = c_{cd} = 0$.
The number of zeros in $C$ satisfies   $\nu(C) \le 2n + 2(n - 2) + 2 + n= 5n-2$.
But since $|\{p, q, c, d\}| \ge 3$, then at least $3$ diagonal zeros (among $c_{qq}, c_{pp}, c_{dd}, c_{cc}$)
  have been counted twice. So $\nu(C) \le 5n - 2 - 3 = 5n - 5$.  We have $5n - 5 < n^2$, so $C$ is not   the  zero matrix.
So we have a path $A-C-B$, by Definition~\ref{dfn:WNL}. We have shown that $\diam(\WNL) \le 2$.

Now, consider a $W(i;j)$--generic matrix $L$, with $i,j\in [n]$, $i\neq j$, and a $W(k;m)$--generic matrix $M$, with  $k,m\in [n]$, $k\neq~m$, $(i,j)\neq (k,m)$. Then, by Definition~\ref{dfn:WNL}, $L$ and $M$ are not joined by an edge in $\WNL$ and
$\dist(L, M) >~1$, which completes the proof.
\end{proof}

The following example shows that the above Lemma is not true if $n=3$.

\begin{ex}
For
$A = \begin{bmatrix}
0 & -1 & 0 \\
-1 & 0 & -1 \\
-1 & 0 & 0
\end{bmatrix} \in W(1;2),\ B = \begin{bmatrix}
0 & 0 & -1 \\
-1 & 0 & 0 \\
-1 & -1 & 0
\end{bmatrix} \in W(1;3)$
we get that $\dist(A,B) =3$ in $\WNL$.
\end{ex}
Indeed,
the following path shows that $\dist(A,B) \le 3$ in $\WNL$ $$A - \begin{bmatrix}
0 & 0 & -1 \\
0 & 0 & 0 \\
0 & -1 & 0
\end{bmatrix} - \begin{bmatrix}
0 & -1 & 0 \\
0 & 0 & -1 \\
0 & 0 & 0
\end{bmatrix} - B.$$

Note that matrices $A$ and $B$ are not mutually orthogonal. Let us show that $\dist(A,B) > 2$ in $\WNL$.
Indeed, suppose that we have a path $A - D - B$ in $\WNL$. By definition, $A$ and $D$ are connected if for some $k, m$ one of the following items holds:
\begin{enumerate}
\item $A\in W(k;m)\cap Z(k;m)\cap Z(m;k)$ and $D\in W(m;k)$,
\item $A\in W(k;m)\cap Z(m;k)$ and $D\in W(m;k)\cap Z(k;m)$,
\item $A\in W(k;m)$ and $D\in W(m;k)\cap Z(k;m)\cap Z(m;k)$. \label{item:last}
\end{enumerate}
The structure of $A$ implies that the only case left is the item~\ref{item:last} with $k = 1, m = 2$.
Similarly, for $B$ with $k = 1, m = 3$. But $D\in W(2;1)\cap Z(1;2)\cap Z(2;1)$ and $D\in W(3;1)\cap Z(1;3)\cap Z(3;1)$ only if $D$ is zero and the proof is complete.

\section*{Acknowledgments}

The work of the first and the second authors is financially supported by the grant RSF 17--11--01124.
The work of the third author is partially supported by  Ministerio de Econom\'{\i}a y Competitividad, Proyecto I+D MTM 2016--76808--P and
910444 UCM group.

The authors are thankful to the referees for useful comments which helped to improve the paper.

\end{document}